\begin{document}
\righthyphenmin=2

\renewcommand{\refname}{References}
\renewcommand{\bibname}{Bibliography}
\renewcommand{\proofname}{Proof}

\newtheorem{lm}{Lemma}
\newtheorem*{lm*}{Lemma}
\newtheorem{tm}{Theorem}
\newtheorem*{tm*}{Theorem}
\newtheorem*{prop}{Proposition}
\newtheorem*{mtm}{Main Theorem}
\newtheorem*{atm}{Another Presentation}
\newtheorem{cl}{Corollary}
\newtheorem{mcl}{Corollary}
\newtheorem*{cl*}{Corollary}
\theoremstyle{definition}
\newtheorem*{df}{Definition}
\theoremstyle{remark}
\newtheorem*{rk}{Remark}

\numberwithin{lm}{section}
\numberwithin{cl}{lm}

\newcommand{\Max}{\mathop{\mathrm{Max}}\nolimits}
\newcommand{\proj}{\mathop{\mathrm{proj}}\nolimits}
\newcommand{\Card}{\mathop{\mathrm{Card}}\nolimits}
\newcommand{\Ker}{\mathop{\mathrm{Ker}}\nolimits}
\newcommand{\Cent}{\mathop{\mathrm{Cent}}\nolimits}
\newcommand{\E}{{\mathrm{E}}}
\newcommand{\St}{\mathop{\mathrm{St}}\nolimits}
\newcommand{\Sp}{\mathop{\mathrm{Sp}}\nolimits}
\newcommand{\Ep}{\mathop{\mathrm{Ep}}\nolimits}
\newcommand{\GL}{\mathop{\mathrm{GL}}\nolimits}
\newcommand{\Kt}{\mathop{\mathrm{K_2}}\nolimits}
\newcommand{\Ko}{\mathop{\mathrm{K_1}}\nolimits}
\newcommand{\Ho}{\mathop{\mathrm{H_1}}\nolimits}
\newcommand{\Ht}{\mathop{\mathrm{H_2}}\nolimits}
\newcommand{\epi}{\twoheadrightarrow}
\newcommand{\sgn}{\mathrm{sgn}}
\newcommand{\eps}[1]{\varepsilon_{#1}}
\newcommand{\sign}[1]{\mathrm{sign}(#1)}
\newcommand{\lan}{\langle}
\newcommand{\ran}{\rangle}
\newcommand{\inv}{^{-1}}
\newcommand{\ur}[1]{\!\,^{(#1)}U_1}
\newcommand{\ps}[1]{\!\,^{(#1)}\!P_1}
\newcommand{\ls}[1]{\!\,^{(#1)}\!L_1}

\renewcommand{\labelenumi}{\theenumi{\rm)}}
\renewcommand{\theenumi}{\alph{enumi}}

\author{Andrei Lavrenov}

\title{A local--global principle for symplectic $\Kt$}

\date{}

\begin{abstract}
We prove that an element of the symplectic Steinberg group is trivial if and only if its image under any maximal localisation homomorphism is trivial.
\end{abstract}

\maketitle

\section*{Introduction}

The main goal of the present paper is to establish a symplectic $\mathrm K_2$-analogue of Quillen's Patching Theorem which is the key ingredient in his solution of Serre's problem (see~\cite{n13})\footnote{The author acknowledges financial support from Russian Science Foundation grant 14--11--00297.}.

\begin{tm*}[Quillen]
Let $R$ be a commutative ring, and $P$ be a finitely generated projective module over the polynomial ring $R[t_1,\ldots,t_n]$. Then $P$ is extended from $R$ {\rm (}i.e., there exists a projective $R$-module $Q$ such that $P\cong R[t_1,\ldots,t_n]\otimes Q${\rm )} if and only if $P_{\mathfrak m}$ is extended from $R_{\mathfrak m}$ for every maximal ideal $\mathfrak m$ of $R$.
\end{tm*}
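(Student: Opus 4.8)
Write $\mathbf t=(t_1,\ldots,t_n)$. The ``only if'' direction is immediate, since localisation carries extended modules to extended ones; all the content is in the converse. The plan is to study the \emph{Quillen ideal}
\[
\mathfrak q(P)=\{\,f\in R : P_f\text{ is extended from }R_f\,\}
\]
and to prove two things: (i) that $\mathfrak q(P)$ is an ideal of $R$ (Quillen's patching lemma)---which I would prove directly for all $n$, avoiding an induction on the number of variables that would force one to work with maximal ideals of $R[t_1,\ldots,t_{n-1}]$ whose contractions to $R$ need not be maximal---and (ii) that if $P_{\mathfrak m}$ is extended from $R_{\mathfrak m}$, then $\mathfrak q(P)\not\subseteq\mathfrak m$. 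For (ii) I would set $M=P/(t_1,\ldots,t_n)P$, a finitely generated projective $R$-module, and note that extendedness of $P_{\mathfrak m}$ yields an $R_{\mathfrak m}[\mathbf t]$-isomorphism $R_{\mathfrak m}[\mathbf t]\otimes_{R_{\mathfrak m}}M_{\mathfrak m}\xrightarrow{\,\sim\,}P_{\mathfrak m}$; since $P$ and $M$ are finitely presented, the relevant modules of homomorphisms commute with localisation, so this isomorphism already descends to $R_f$ for some $f\notin\mathfrak m$, whence $f\in\mathfrak q(P)\setminus\mathfrak m$. Granting (i) and (ii), the hypothesis forces $\mathfrak q(P)$ to be contained in no maximal ideal, so $\mathfrak q(P)=R$; thus $1\in\mathfrak q(P)$, which says exactly that $P$ is extended from $R$.

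For the patching lemma, stability of $\mathfrak q(P)$ under multiplication by $R$ is trivial ($P_{fg}=(P_f)_g$). For stability under addition it suffices, after replacing $R$ by $R_{f+g}$, to treat two comaximal elements $f,g\in R$ with $P_f$ extended from $R_f$ and $P_g$ extended from $R_g$, and to conclude that $P$ is extended from $R$. With $M$ as above, extendedness supplies isomorphisms $\alpha\colon M_f[\mathbf t]\xrightarrow{\,\sim\,}P_f$ over $R_f[\mathbf t]$ and $\beta\colon M_g[\mathbf t]\xrightarrow{\,\sim\,}P_g$ over $R_g[\mathbf t]$, which I would normalise so that each reduces to the identity of $M_f$, resp.\ $M_g$, modulo $(t_1,\ldots,t_n)$. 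Then over $R_{fg}$ the transition automorphism $\theta=\beta_f^{-1}\circ\alpha_g\in\mathrm{Aut}_{R_{fg}[\mathbf t]}(M_{fg}[\mathbf t])$ is itself the identity modulo $(t_1,\ldots,t_n)$. If $\theta$ factors as $\theta=\psi_2\cdot\psi_1$ with $\psi_1$ the image of some $\theta_1\in\mathrm{Aut}_{R_f[\mathbf t]}(M_f[\mathbf t])$ and $\psi_2$ the image of some $\theta_2\in\mathrm{Aut}_{R_g[\mathbf t]}(M_g[\mathbf t])$, then replacing $\alpha$ by $\alpha\circ\theta_1^{-1}$ and $\beta$ by $\beta\circ\theta_2$ makes the two isomorphisms agree over $R_{fg}$; since $f,g$ are comaximal and $M[\mathbf t]$ is finitely presented, they then glue to an isomorphism $M[\mathbf t]\xrightarrow{\,\sim\,}P$ over $R[\mathbf t]$, exhibiting $P$ as extended from $R$.

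The whole difficulty is thus concentrated in producing that factorisation of $\theta$, and this is the step I expect to be the main obstacle. I would derive it from the technical core of Quillen's argument: for a commutative ring $A$, an element $s\in A$, a finitely generated projective $A[\mathbf t]$-module $Q$ and $\sigma\in\mathrm{Aut}_{A_s[\mathbf t]}(Q_s)$, there is an integer $k\ge 1$ such that whenever $a\equiv b\pmod{s^kA}$ the automorphism $\sigma(a\mathbf t)\circ\sigma(b\mathbf t)^{-1}$ of $Q_s$---where $\sigma(a\mathbf t)$ denotes $\sigma$ after the substitution $t_i\mapsto at_i$---lies in the image of $\mathrm{Aut}_{A[\mathbf t]}(Q)\to\mathrm{Aut}_{A_s[\mathbf t]}(Q_s)$. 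This is proved by a ``continuity'' computation: realise $Q$ as a direct summand of a free module, write $\sigma$ and $\sigma^{-1}$ as matrices over $A_s[\mathbf t]$, clear all denominators by one fixed power $s^N$, and use that $a-b$ divides $a^i-b^i$; taking $k$ large then forces $\sigma(a\mathbf t)\sigma(b\mathbf t)^{-1}$ and its inverse to have all entries in $A[\mathbf t]$, the genuinely delicate point being to descend the resulting matrix from the ambient free module back to $Q$. Granting the lemma, I would apply it once with $(A,s)=(R_g,\bar f)$ and once with $(A,s)=(R_f,\bar g)$, choose $c,d\in R$ with $cf^{k}+dg^{k}=1$ for a common such $k$, set $e=dg^{k}$, and use the identity $\theta(\mathbf t)=\bigl(\theta(\mathbf t)\,\theta(e\mathbf t)^{-1}\bigr)\cdot\theta(e\mathbf t)$: since $e\equiv0\pmod{g^{k}}$ in $R_f$ the second factor $\theta(e\mathbf t)=\theta(e\mathbf t)\,\theta(\mathbf 0)^{-1}$---with $\theta(\mathbf 0)=\mathrm{id}$ by the normalisation---comes from $\mathrm{Aut}_{R_f[\mathbf t]}(M_f[\mathbf t])$, and since $1-e=cf^{k}\equiv0\pmod{f^{k}}$ in $R_g$ the first factor comes from $\mathrm{Aut}_{R_g[\mathbf t]}(M_g[\mathbf t])$---exactly the factorisation needed above. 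Finally, a routine filtered-colimit argument would reduce the whole proof at the outset to the case of a Noetherian ring $R$, which lightens the bookkeeping in the lemma.
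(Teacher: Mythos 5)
The paper does not prove this theorem: it is stated once in the introduction as motivation for the Main Theorem, with a citation to Quillen's 1976 paper, so there is no ``paper's proof'' to compare against. What you have written is a faithful sketch of Quillen's own argument: introduce the Quillen ideal $\mathfrak q(P)$, show it is a genuine ideal by patching two principal localisations, show the hypothesis at maximal ideals forces $\mathfrak q(P)=R$, and concentrate all the difficulty in the ``continuity'' lemma about $\sigma(a\mathbf t)\sigma(b\mathbf t)^{-1}$. Your remark about avoiding an induction on the number of variables is well taken (maximal ideals of $R[t_1,\dots,t_{n-1}]$ need not contract to maximal ideals of $R$), and your version of the technical lemma without the normalisation $\sigma(\mathbf 0)=\mathrm{id}$ is in fact correct, since $\sigma(a\mathbf t)-\sigma(b\mathbf t)$ is divisible by $a-b$ regardless; the application $\theta(e\mathbf t)=\theta(e\mathbf t)\theta(\mathbf 0)^{-1}$ does use the normalisation of $\theta$, which you have.

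The one place I would not call ``a proof'' yet is exactly the spot you flag. After clearing denominators, you have a matrix $g$ over $A[\mathbf t]$ on the ambient free module $F=Q\oplus Q'$ that localises to $\tau\oplus\mathrm{id}$. Descending it to $Q$ is easy: replace $g$ by $p g p+(1-p)g(1-p)$, where $p$ is the idempotent with image $Q$; this localises to the same thing and now preserves $Q$. The delicate part is invertibility: you obtain endomorphisms of $Q$ lifting $\tau$ and $\tau^{-1}$, but $\mathrm{End}_{A[\mathbf t]}(Q)\to\mathrm{End}_{A_s[\mathbf t]}(Q_s)$ need not be injective, so their product need not be the identity. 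In the Noetherian case the $s$-power torsion of $\mathrm{End}_{A[\mathbf t]}(Q)$ is killed by a fixed power $s^{L}$, and increasing $k$ by $L$ forces the product to be the identity; this is where your Noetherian reduction earns its keep. But be careful with that reduction: the hypothesis of the theorem concerns maximal ideals of $R$, which do not pass to the finitely generated subrings $R_i$. What reduces cleanly by a filtered colimit is only the patching lemma (comaximal $f,g\in R$, both localisations extended, conclude $P$ extended), since ``$P_f$ is extended'' is a finitely presented condition and descends to $R_i$ for $i$ large. Reduce that statement, not the theorem itself, and the argument closes.
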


Later Suslin proved his $\Ko$-analogue of Serre's problem with the use of a similar statement concerning elementary matrices (see~\cite{n11}).

\begin{tm*}[Suslin]
Let $n\geq3$ and $g\in\GL_n(R[t],\,tR[t])$. Then $g\in\E_n(R[t])$ if and only if $g_{\mathfrak m}\in\E_n(R_{\mathfrak m}[t])$ for every maximal ideal $\mathfrak m$ of $R$.
\end{tm*}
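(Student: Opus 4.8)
The plan is the following. One implication is immediate, since each localisation homomorphism $\GL_n(R[t])\to\GL_n(R_{\mathfrak m}[t])$ carries $\E_n(R[t])$ into $\E_n(R_{\mathfrak m}[t])$. For the converse I would adjoin a fresh variable $u$ and study the \emph{level ideal}
\[
\mathfrak a=\{\, s\in R : g(t+su)\,g(t)\inv\in\E_n\big(R[t,u]\big)\,\}.
\]
The whole statement reduces to proving $1\in\mathfrak a$: specialising $g(t+u)\,g(t)\inv\in\E_n(R[t,u])$ at $t=0$ gives $g(u)\in\E_n(R[u])$, which is exactly what is wanted.

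The first step is to check that $\mathfrak a$ is indeed an ideal of $R$. Stability under $R$-multiples follows by applying the endomorphism $u\mapsto ru$ of $R[t,u]$, which preserves $\E_n$. For additivity I would use the cocycle identity
\[
g\big(t+(s_1+s_2)u\big)\,g(t)\inv=\Big(g\big((t+s_1u)+s_2u\big)\,g(t+s_1u)\inv\Big)\cdot\Big(g(t+s_1u)\,g(t)\inv\Big),
\]
where the last factor lies in $\E_n(R[t,u])$ because $s_1\in\mathfrak a$, and the first factor is the image of $g(t'+s_2u)\,g(t')\inv$ under the ring homomorphism $t'\mapsto t+s_1u$, hence lies in $\E_n(R[t,u])$ because $s_2\in\mathfrak a$.

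Next I would show that $\mathfrak a$ is contained in no maximal ideal, which forces $\mathfrak a=R$ and finishes the proof. Fix $\mathfrak m$; a factorisation of $g_{\mathfrak m}$ into finitely many elementary matrices involves only finitely many denominators, and (after multiplying by finitely many further elements of $R\setminus\mathfrak m$ if need be) their product is some $s\notin\mathfrak m$ with $g_s\in\E_n(R_s[t])$ already. It therefore suffices to establish a \emph{dilation lemma}: if $g\in\GL_n(R[t],\,tR[t])$ and $g_s\in\E_n(R_s[t])$ for some $s\in R$, then $g(t+s^Nu)\,g(t)\inv\in\E_n(R[t,u])$ — that is, $s^N\in\mathfrak a$ — for all sufficiently large $N$. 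Granting this, $s^N\in\mathfrak a\setminus\mathfrak m$, so $\mathfrak a\not\subseteq\mathfrak m$; as $\mathfrak m$ was arbitrary and $\mathfrak a$ is an ideal, $\mathfrak a=R$.

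The dilation lemma is the technical heart, and the step I expect to be the main obstacle. I would prove it by writing $g_s=\prod_l e_{i_l j_l}(f_l(t))$ over $R_s[t]$ with $f_l(0)=0$ (using $g(0)=I_n$), expanding $g_s(t+u)\,g_s(t)\inv$ as a product of $\E_n(R_s[t,u])$-conjugates of the elementary generators $e_{i_l j_l}\big(f_l(t+u)-f_l(t)\big)$, and then substituting $u\mapsto s^Nu$. Since $f_l(t+u)-f_l(t)\in uR_s[t,u]$ and the $f_l$ share a common denominator, this substitution pushes the varying part of each factor into $s^{N-c}uR[t,u]$ with $c$ independent of $N$, while the conjugating matrices contribute only a bounded power of $s$ in their denominators; for $N$ large the resulting factors then have the shape $I_n+vw^{T}$ with $w^{T}v=0$ and all entries over $R[t,u]$. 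For $n\ge3$ such a matrix is a product of elementary matrices over $R[t,u]$ — this is exactly where the Steinberg relation $e_{ik}(a)\,e_{kj}(b)\,e_{ik}(a)\inv\,e_{kj}(b)\inv=e_{ij}(ab)$, which requires three distinct indices, enters, and why the rank hypothesis $n\ge3$ cannot be dropped. The delicate points are the uniform bookkeeping of denominators, so that one $N$ works for every factor simultaneously, and checking that the resulting factorisation is an identity over $R[t,u]$ and not merely after inverting $s$; both are handled by the same $n\ge3$ device.
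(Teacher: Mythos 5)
The paper does not prove this statement: it is quoted from Suslin~\cite{n11} as a known result, and the paper's own local--global argument (for the $\mathrm K_2$-analogue) proceeds by a different global skeleton — a two-localisation patching lemma (Lemma~3.4) plus Noetherian induction, following~\cite{n8} — rather than by the level-ideal trick. Your proposal instead reproduces Suslin's original proof: the level ideal $\mathfrak a$, the cocycle identity, the reduction to a dilation lemma. That part is sound: $\mathfrak a$ is indeed an ideal for the reasons you give, $1\in\mathfrak a$ does imply $g\in\E_n(R[t])$ by the $t\mapsto0$ specialisation since $g(0)=I_n$, and the descent from a maximal ideal $\mathfrak m$ to a single $s\notin\mathfrak m$ with $g_s\in\E_n(R_s[t])$ is correct because a factorisation into finitely many elementaries involves only finitely many denominators.

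The flaw is in the core of the dilation lemma. You reduce to matrices of the shape $I_n+vw^T$ with $w^Tv=0$ over $R[t,u]$ and assert that for $n\ge3$ such a matrix automatically lies in $\E_n(R[t,u])$. This is false in that generality: the usual factorisation $I+vw^T=\big[I+ve_k^T,\ I+e_kw^T\big]$ needs $v_k=w_k=0$ for some common $k$, and the more general statement requires $v$ (or $w$) unimodular — indeed, for unimodular $v$ this is essentially equivalent to Suslin's normality theorem and is not a one-line consequence of $n\ge3$. In your situation $v$ is a column of $h\in\E_n(R_s[t])$, hence unimodular over $R_s[t]$, but after you clear denominators to land in $R[t,u]$ the rescaled vector $s^Kv$ is no longer unimodular, so the claim does not apply. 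Suslin's actual proof of the dilation lemma does not pass through $I+vw^T$ at all: one writes $h$ as a product of elementary matrices over $R_s[t]$ and does an induction on the length of this product, at each step commuting a single $e_{kl}(c)$ with $c\in R_s[t]$ past $e_{ij}(a)$ with $a\in s^NR[t,u]$ via the Steinberg relations; each commutation consumes a bounded power of $s$, and a single $N$ large enough for the whole (fixed, finite) product works. You do correctly identify the Steinberg commutator identity as the device that makes $n\ge3$ essential and the denominator bookkeeping as the delicate point, so the gap is in how the step is organised rather than in which tools are used; but as written the $I+vw^T$ step does not go through and would need to be replaced by the length induction.
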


Subsequently, $\Ko$-analogues for Chevalley groups~\cite{n12,n5,n1} were obtained. Presently, $\Ko$-analogue is known in much larger generality, namely, for isotropic reductive groups~\cite{n9} and in the framework of Stepanov's universal localisation~\cite{n10}. As opposed to that, $\Kt$-analogues are proven only for $\GL_n$~\cite{n14} and, recently, for Chevalley groups of type $\mathrm E_l$~\cite{n8}.

The Main Theorem of the present paper is the following $\Kt$-analogue to the local-global principle for $\Sp_{2n}$.

\begin{mtm}
Let $R$ be an arbitrary commutative ring {\rm(}with 1{\rm)}, $n\geq3$, and $g\in\St\!\Sp_{2n}(R[t],\,tR[t])$. Then $g=1$ {\rm (}corr., lies in the image of $\St\!\Sp_{2n-2}(R[t])${\rm )} if and only if $g_{\mathfrak m}=1\in\St\!\Sp_{2n}(R_{\mathfrak m}[t])$ {\rm (}corr., lies in the image of $\St\!\Sp_{2n-2}(R_{\mathfrak m}[t])${\rm )} for every maximal ideal $\mathfrak m$ of $R$.
\end{mtm}

The paper is organised as follows. In the first section we recall results of~\cite{l1}, where a ``basis-free'' presentation of the symplectic Steinberg group id given. We make an essential use of these results in the Section~4. In the next section we show that three possible definitions of the relative symplectic Steinberg group coincide. One of them is the ``correct'' definition, and the other two are used in our proof. In the Section~3 we establish the local-global principle modulo the main technical lemma. Finally, in the Section~4, we give a proof to this lemma, namely, construct a symplectic analogue of Tulenbaev map.

\section{Absolute Steinberg groups}

In the present paper $R$ always denotes an arbitrary associative commutative unital ring, $R^{2n}$ is the free right $R$-module, we number its basis as follows: $e_{-n}$, $\ldots$, $e_{-1}$, $e_1$, $\ldots$, $e_n$, $n\geq3$. {\it The symplectic group} $\Sp_{2n}(R)$ is the group of automorphisms of $R^{2n}$ preserving the standard symplectic form $\lan\,,\,\ran$, where $\lan e_i,\,e_{-i}\ran=1,\ i>0$. We denote {\it the elementary symplectic transvections} by 
$$T_{ij}(a)=1+e_{ij}\cdot a-e_{-j,-i}\cdot a\cdot\sign i\cdot\sign j,\quad T_{i,-i}(a)=1+e_{i,-i}\cdot a\cdot\sign i,$$
 where $a\in R$, $i$, $j\in\{-n,$ $\ldots,$ $-1,$ $1,$ $\ldots,$ $n\}$, $i\not\in\{\pm j\}$, $e_{ij}$ is a matrix unit. They generate {\it the elementary symplectic group} $\Ep_{2n}(R)$.

\begin{df}
The {\it symplectic Steinberg group} $\St\!\Sp_{2n}(R)$ is generated by the formal symbols $X_{ij}(a)$ for $i\neq j$, $a\in R$ subject to the Steinberg relations
\setcounter{equation}{-1}
\renewcommand{\theequation}{S\arabic{equation}}
\begin{align}
&X_{ij}(a)=X_{-j,-i}(-a\cdot\sign i\sign j),\\
&X_{ij}(a)X_{ij}(b)=X_{ij}(a+b),\\
&[X_{ij}(a),\,X_{hk}(b)]=1,\text{ for }h\not\in\{j,-i\},\ k\not\in\{i,-j\},\\
&[X_{ij}(a),\,X_{jk}(b)]=X_{ik}(ab),\text{ for }i\not\in\{-j,-k\},\ j\neq-k,\\
&[X_{i,-i}(a),\,X_{-i,j}(b)]=X_{ij}(ab\cdot\sign i)X_{-j,j}(-ab^2),\\
&[X_{ij}(a),\,X_{j,-i}(b)]=X_{i,-i}(2\,ab\cdot\sign i).
\end{align}
\end{df}

There is a natural projection $\phi\colon\St\!\Sp_{2n}(R)\epi\Ep_{2n}(R)$ sending the generators $X_{ij}(a)$ to $T_{ij}(a)$. In other words, the Steinberg relations hold for the elementary symplectic transvections.

To define the elementary symplectic group $\Ep_{2n}(R)$ instead of $T_{ij}(a)$ one can use ``basis-independent'' ESD-transformations $T(u,\,v,\,a)$ defined by $$w\mapsto w+u(\lan v,\,w\ran+a\lan u,\,w\ran)+v\lan u,\,w\ran$$
as a set of generators. Here $u$, $v\in R^{2n}$, $\lan u,\,v\ran=0$, $a\in R$. See section~1 of~\cite{l1} for details. On the level of Steinberg groups, this idea leads to the following presentation, inspired by van der Kallen's paper~\cite{n3}. It is the main theorem of~\cite{l1}.
\begin{tm*}
The symplectic Steinberg group $\St\!\Sp_{2n}(R)$ can be defined by the set of generators
$$
\big\{[u,\,v,\,a]\,\big|\ u\in\Ep_{2n}(R)e_1,\,v\in R^{2n},\ \ \lan u,\,v\ran=0,\ a\in R\big\}
$$
and relations
\setcounter{equation}{0}
\renewcommand{\theequation}{K\arabic{equation}}
\begin{align}
&[u,\,v_1,\,a_1][u,\,v_2,\,a_2]=[u,\,v_1+v_2,\,a_1+a_2+\lan v_1,\,v_2\ran],\\
&[u_1,\,u_2b,\,0]=[u_2,\,u_1b,\,0]\,\text{ for any }\, b\in R,\\
&[u',\,v',\,a'][u,\,v,\,a][u',\,v',\,a']\inv=[T(u',\,v',\,a')u,\,T(u',\,v',\,a')v,\,a],
\end{align}
For the usual generators of the symplectic Steinberg group the following identities hold 
$$
X_{ij}(a)=[e_i,\,e_{-j}\cdot a\cdot\sign{-j},\,0]\ \text{for $j\neq-i$},\quad X_{i,-i}(a)=[e_i,\,0,\,a],
$$
and, moreover, $\phi$ sends $[u,\,v,\,a]$ to $T(u,\,v,\,a)$. Furthermore, the following relations are automatically satisfied
\begin{align}
&[u,\,ua,\,0]=[u,\,0,\,2a],\\
&[ub,\,0,\,a]=[u,\,0,\,ab^2],\\
&[u+v,\,0,\,a]=[u,\,0,\,a][v,\,0,\,a][v,\,ua,\,0]\,\text{ for }\lan u,\,v\ran=0.
\end{align}
\end{tm*}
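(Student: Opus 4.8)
The argument follows the pattern of van der Kallen's ``another presentation'' for $\GL_n$~\cite{n3}. Write $G$ for the group defined by the generators $[u,\,v,\,a]$ (with $u\in\Ep_{2n}(R)e_1$, $\lan u,\,v\ran=0$, $a\in R$) and the relations (K1)--(K3), and write $\St$ for $\St\!\Sp_{2n}(R)$ as defined above. The plan is to build mutually inverse homomorphisms $\alpha\colon\St\to G$ and $\beta\colon G\to\St$ realising the stated formulas, after which the theorem follows. The homomorphism $\alpha$ is prescribed on generators by $X_{ij}(a)\mapsto[e_i,\,e_{-j}a\sign{-j},\,0]$ for $j\neq-i$ and $X_{i,-i}(a)\mapsto[e_i,\,0,\,a]$; its existence amounts to checking that these elements of $G$ satisfy the Steinberg relations (S0)--(S5), which is a finite computation using (K1)--(K3) together with the auxiliary relations (K4)--(K6) (so the latter must be derived first; see the last paragraph). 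The equality $\phi([u,v,a])=T(u,v,a)$ then follows, since $[u,v,a]\mapsto T(u,v,a)$ manifestly respects (K1)--(K3) --- ESD-transformations act $R$-linearly on $R^{2n}$ --- and so defines a homomorphism $G\to\Ep_{2n}(R)$ which agrees with $\phi$ after precomposition with $\alpha$, both sending $X_{ij}(a)$ to $T_{ij}(a)$.

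The construction of $\beta$ is the heart of the matter. One first defines $\beta$ on the symbols with $u=e_1$: writing $v=e_1v_1+\sum_{2\le|k|\le n}e_kv_k$ (the $e_{-1}$-coefficient vanishes because $\lan e_1,\,v\ran=0$) and using (K1) together with (K4) repeatedly, one rewrites $[e_1,\,v,\,a]$ as a definite ordered product of standard generators $X_{1,-k}(\pm v_k)$ and a single $X_{1,-1}$-factor absorbing $a$ and the form terms produced by (K1); declare $\beta([e_1,v,a])$ to be this word in $\St$. For arbitrary $u\in\Ep_{2n}(R)e_1$, pick $g\in\Ep_{2n}(R)$ with $ge_1=u$, lift a factorisation of $g$ into ESD-transformations to an element $\tilde g\in\St$, and set $\beta([u,v,a])=\tilde g\cdot\beta([e_1,\,g\inv v,\,a])\cdot\tilde g\inv$. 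One must then verify that this is independent of the choices and that the resulting assignment sends (K1)--(K3) to relations holding in $\St$: compatibility with (K3) is forced by the definition; (K1) and (K2), conjugated back to the case $u=e_1$, become identities among the $X_{1j}$'s that follow from (S0)--(S5); and independence of the choices rests on (i) the ambiguity of $\tilde g$ lying in $\Ker\phi$, which is central in $\St$ for $n\ge3$, so that the conjugation above does not see it, and (ii) an equivariance check showing that $\beta|_{u=e_1}$ intertwines the stabiliser of $e_1$ in $\Ep_{2n}(R)$, verified on generators of that stabiliser. With $\alpha$ and $\beta$ both available, each composite fixes a set of generators and is therefore the identity, which proves that the two presentations coincide.

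Finally, the auxiliary relations are to be deduced from (K1)--(K3) by formal manipulation. Setting $v_1=v_2=0$ in (K1) already gives $[u,\,0,\,0]=1$ and the additivity $[u,\,0,\,a_1][u,\,0,\,a_2]=[u,\,0,\,a_1+a_2]$; the cross-term relation (K6), applied (where admissible) to two copies of a single vector since $\lan u,\,u\ran=0$, then relates $[u,\,ua,\,0]$ to $[u,\,0,\,2a]$ once (K5) is known, and (K5) and (K6) are themselves obtained as in the previous paragraph, by using (K3) to reduce to the case of standard basis vectors and then expanding both sides via (K1) into products of $X_{ij}$'s governed by the Steinberg relations. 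The step I expect to put up the most resistance is the well-definedness of $\beta$: passing from the distinguished symbols $[e_1,v,a]$ to all $[u,v,a]$ by conjugation is legitimate only once the lifting ambiguity is known to be central, and the stabiliser-equivariance check is exactly where the quadratic terms in (S4) and (S5) must be handled carefully.
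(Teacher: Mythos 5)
This theorem is not proved in the present paper at all: it is quoted verbatim as ``the main theorem of~\cite{l1}'', and the paper's role here is only to recall the statement and to cite the pieces of its proof (Lemmas~20--36 of~\cite{l1}) that are needed later. So there is no ``paper's own proof'' in this source to compare against; what can be compared is your proposal against the actual strategy of~\cite{l1}, which is outlined in Lemmas~\ref{xlist}--\ref{ylist} of Section~1.

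Your construction of $\beta$ has a genuine and, I think, fatal gap. You define $\beta([u,v,a])$ by picking $g\in\Ep_{2n}(R)$ with $ge_1=u$, lifting $g$ to $\tilde g\in\St$, and conjugating, and you discharge the dependence on the lift by invoking that ``$\Ker\phi$ is central in $\St$ for $n\ge3$.'' But centrality of $\Ker\phi$ (i.e., centrality of symplectic $\Kt$) is itself a deep theorem, and for $\Sp_{2n}$ it is proved \emph{using} this very presentation and its descendants; it is not available as a black box at the point where the presentation is being established. Van der Kallen's and Lavrenov's arguments are arranged precisely to avoid this circularity: in~\cite{l1} the elements $X(u,v,a)$ are built by an explicit multi-stage construction for \emph{all} $u,v\in R^{2n}$ (first $X(u,0,a)$, then $X(u,v,0)$, then the general case, with the auxiliary $Y_{(i)}$-elements for vectors having a symmetric pair of zero coordinates), and the relations X1--X10, Y0--Y13 are proved directly by combinatorial manipulation of Steinberg generators. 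In particular the equivariance property X1, $gX(u,v,a)g\inv=X(\phi(g)u,\phi(g)v,a)$, is a \emph{theorem} about the explicitly constructed elements, not a definition made after arbitrarily choosing a lift of $g$. Your approach swaps the logical order: it assumes exactly the kind of independence-of-lift statement that the presentation is designed to give you.

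A secondary issue: even granting well-definedness, your sketch of the stabiliser-equivariance check (choice of $g$ with $ge_1=u$, as opposed to choice of the lift $\tilde g$) is the other place where the quadratic terms in (S4)--(S5) and the column-type condition $u\in\Ep_{2n}(R)e_1$ must interact, and this is precisely the content of the delicate lemmas in~\cite{l1} (e.g.\ Y4--Y7, Y10--Y13, X8--X10). Your two-line description doesn't engage with why $\lan u,v\ran=0$ and the unimodular-column hypothesis suffice to make the reduction to $u=e_1$ coherent. By contrast, the direction $\alpha\colon\St\to G$ that you describe (checking (S0)--(S5) for the proposed images, and the consequence $\phi([u,v,a])=T(u,v,a)$) is routine and correct, as is the observation that (K4)--(K6) should be derivable once the harder machinery is in place. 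The heart of the proof is the ``basis-free'' construction of $X(u,v,a)$ in $\St$, and that is what your proposal is missing.
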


More precisely, we need elements $X(u,\,v,\,a)$ in $\St\!\Sp_{2n}(R)$ defined in~\cite{l1} for arbitrary $u$, $v\in R^{2n}$ and $a\in R$ (see the definition of $X(u,\,0,\,a)$ in section~4, then the definition of $X(u,\,v,\,0)$ on p.~3775 and the general case on p.~3777, or an overview in section~1). For $u\in\Ep_{2n}(R)$ these elements coincide with the generators $[u,\,v,\,a]$ above (see the last section of~\cite{l1} where the isomorphism between two presentations is described). Below we list their properties, proven in~\cite{l1}.

\begin{lm}
\label{xlist}
For all $u$, $v\in R^{2n}$, $\lan u,\,v\ran=0$, $a$, $b\in R$, $g\in\St\!\Sp_{2n}(R)$ one has
\setcounter{equation}{-1}
\renewcommand{\theequation}{X\arabic{equation}}
\begin{align}
&\phi\big(X(u,\,v,\,a)\big)=T(u,\,v,\,a),\\
&g\,X(u,\,v,\,a)g\inv=X(\phi(g)u,\,\phi(g)v,\,a),\\
&X(ub,\,0,\,a)=X(u,\,0,\,b^2a),\\
&X(u,\,0,\,a)X(u,\,0,\,b)=X(u,\,0,\,a+b),\\
&X(u,\,v,\,0)=X(v,\,u,\,0),\\
&X(ua,\,ub,\,0)=X(u,\,0,\,2ab),\\
&X(u+v,\,0,\,1)=X(u,\,0,\,1)X(v,\,0,\,1)X(u,\,v,\,0),\\
&X(u,\,v,\,a)=X(u,\,v,\,0)X(u,\,0,\,a).
\end{align}
\end{lm}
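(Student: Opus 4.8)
The plan is to recognise that for $u\in\Ep_{2n}(R)e_1$ the relations X0--X7 are immediate consequences of the ``basis-free'' presentation quoted above: X7 and X3 are special cases of K1, X2 is K5, X4 is K2, X5 is K4, X6 is K6, X0 is asserted there outright, and X1 for a single generator $[u',v',a']$ is exactly K3 (using that $\phi$ sends $[u',v',a']$ to $T(u',v',a')$), whence X1 for an arbitrary $g\in\St\!\Sp_{2n}(R)$ follows by induction on the word length of $g$ in these generators, since $\phi$ is a homomorphism and $T(u',v',a')$ maps $\Ep_{2n}(R)e_1$ into itself. The genuine content of the lemma is therefore the removal of the hypothesis $u\in\Ep_{2n}(R)e_1$: the elements $X(u,v,a)$ are defined in~\cite{l1} for \emph{all} $u,v\in R^{2n}$ with $\lan u,v\ran=0$, and the claim is that X0--X7 persist in that generality.

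For this I would follow the constructions of~\cite{l1}. There $X(u,0,a)$ is built for arbitrary $u$ by fixing a decomposition of $u$ into scaled standard basis vectors and assembling the corresponding generators together with the correction terms dictated by X6; the crucial point — the hard theorem of~\cite{l1} — is that the resulting element does not depend on the chosen decomposition. Granting this, X2 and X3, which involve only $X(u,0,\cdot)$, are read off from the definition and the Steinberg relations S0--S5; $X(u,v,0)$ is then defined for arbitrary orthogonal $u,v$ and X4, X6 follow in the same way; X5 is obtained from X2 and the basis-vector case by a short computation; and X7 is simply the definition $X(u,v,a):=X(u,v,0)X(u,0,a)$. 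Property X0 holds because the ESD-transvections $T(u,v,a)$ satisfy in $\Ep_{2n}(R)$ precisely the assembly identities used to build $X(u,v,a)$, so $\phi$ carries one construction to the other termwise.

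The main obstacle is X1 for vectors $u$ outside the orbit $\Ep_{2n}(R)e_1$. Since the generators $X_{ij}(c)$ generate $\St\!\Sp_{2n}(R)$ and compose correctly under $\phi$, it suffices to establish $g\,X(u,v,a)\,g\inv=X(\phi(g)u,\,\phi(g)v,\,a)$ for $g=X_{ij}(c)$, and by X7 one may treat $X(u,0,a)$ and $X(u,v,0)$ separately. Writing $u$ through a basis decomposition, conjugating each assembled factor by $g$ — legitimate because every such factor involves only basis vectors, which \emph{are} in $\Ep_{2n}(R)e_1$, so the already-established orbit case of X1 applies — and reassembling, one reduces to comparing two decompositions of $\phi(g)u$: the one obtained by transvecting the original decomposition term by term, and any chosen decomposition of $\phi(g)u$ itself; these agree by the well-definedness theorem invoked above. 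I expect the real difficulty to be purely this bookkeeping: matching decompositions, keeping track of the symplectic cross terms produced when a transvection is applied to a sum, and verifying that the orthogonality constraint $\lan u,v\ran=0$ survives. Once X1 holds for all $u,v$, the special cases used above upgrade to full generality and the list is complete.
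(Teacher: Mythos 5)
The paper's own proof of this lemma is essentially a lookup: X0 is taken from \S1 of~\cite{l1}, X1--X4 are Lemmas~26, 29, 31, 33, 34 there, X6 and X7 are the definitions on pp.~3775 and 3777 of~\cite{l1}, and the one step the paper actually carries out is deriving X5 from X6, X2 and X3, namely
\[
X(ua,\,ub,\,0)=X(ub,\,0,\,-1)\,X(ua,\,0,\,-1)\,X\big(u(a+b),\,0,\,1\big)=X\big(u,\,0,\,(a+b)^2-a^2-b^2\big)=X(u,\,0,\,2ab).
\]
Your plan --- first read X0--X7 off the K-relations for orbit vectors, then re-construct $X(u,v,a)$ for general $u$ by choosing basis decompositions and re-proving independence of the choice --- is therefore a genuinely different and far heavier route; you flag yourself that it would reproduce the core construction of~\cite{l1}, which is exactly what the paper avoids by citing.

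Beyond the mismatch in strategy there is a concrete error that would leave a hole even if everything else were filled in. You claim ``X5 is K4,'' but K4 reads $[u,\,ua,\,0]=[u,\,0,\,2a]$, which is only X5 with the first scalar frozen at~$1$. The general X5, $X(ua,\,ub,\,0)=X(u,\,0,\,2ab)$, involves the vector $ua$, which for a non-unit $a$ does \emph{not} lie in $\Ep_{2n}(R)e_1$ even when $u$ does; so it is not a substitution instance of any K-relation (all of which are stated for orbit vectors), and it genuinely needs the three-line derivation above --- the one piece of real work in the paper's proof, which your sketch omits. The same caveat already affects X2 (the vector $ub$ leaves the orbit) and $u+v$ in X6, so your opening sentence ``for $u\in\Ep_{2n}(R)e_1$ the relations X0--X7 are immediate consequences of K1--K6'' tacitly presupposes the extension of $X(u,v,a)$ to arbitrary vectors, which is precisely the thing being imported from~\cite{l1}.
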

\begin{proof}
For X0 see section~1 of~\cite{l1}, properties X1--X4 are proven in Lemmas~26, 29, 31, 33, 34 of~\cite{l1}, X6 and X7 are actually definitions (see pp.~3775 and 3777 of~\cite{l1}). To get X5 use X6, then X2 and X3.
\end{proof}

We also need to use a few more properties of $X(u,\,v,\,a)$ listed in Lemma~\ref{xlist2} below. To prove them, we pass to another type of elements $Y(u,\,v,\,a)$ defined in~\cite{l1}. As soon as Lemma~\ref{xlist2} is proven, we do not need $Y$'s any more in this paper and use only $X$'s.

For $u\in R^{2n}$ having a pair of zeros in symmetric positions, i.e., such that $u_i=u_{-i}=0$ for some $i$, and $v\in R^{2n}$, such that $\lan u,\,v\ran=0$, $a\in R$ also elements $Y_{(i)}(u,\,v,\,a)$ are defined. Their definition is given in three steps (see definitions and remarks on pp.~3762, 3764 and 3766 of~\cite{l1}). If, in addition, $u$ has another symmetric pair of zeros, i.e., $u_i=u_{-i}=u_j=u_{-j}=0$ for some $i\neq\pm j$, then $Y_{(i)}(u,\,v,\,a)=Y_{(j)}(u,\,v,\,a)$ (see Y1 below) and in this situation we omit the index and denote this element by $Y(u,\,v,\,a)$. We use the following properties of these elements, proven in~\cite{l1}.

\begin{lm}
\label{ylist}
For a fixed index $\,i$ and any $j\neq\pm i$, vectors $u$, $v$, $v'$, $w$, $w'$, $q$, $q'$, $r$, $r'$ $s\in R^{2n}$, such that $u_i=u_{-i}=0$, $\lan u,\,v\ran=\lan u,\,v'\ran=0$, $\lan w,\,e_i\ran=\lan w',\,e_i\ran=0$, $v'_i=v'_{-i}=q_i=q_{-i}=r_i=r_{-i}=r_j=r_{-j}=0$, $q'=e_iq'_i+e_{-i}q'_{-i}$, $r'=e_jr'_j+e_{-j}r'_{-j}$, $s=e_is_i+e_{-i}s_{-i}$, and elements $a$, $a'\in R$, one has
\setcounter{equation}{-1}
\renewcommand{\theequation}{Y\arabic{equation}}
\begin{align}
&\phi\big(Y_{(i)}(u,\,v,\,a)\big)=T(u,\,v,\,a),\\
&Y_{(i)}(u,\,v,\,a)=Y_{(j)}(u,\,v,\,a)\,\text{ if also }\,u_j=u_{-j}=0,\\
&Y(e_i,\,w,\,a)Y(e_i,\,w',\,a')=Y(e_i,\,w+w',\,a+a'+\lan w,\,w'\ran),\\
&Y(e_i,\,w,\,a)=X(e_i,\,w,\,a),\\
&\!\begin{aligned}
Y_{(i)}(u,\,v',\,a)=[Y(e_i,\,u,\,0)&,\,Y(e_{-i},\,v'\,\sign i,\,a)]\cdot\\
\cdot Y&(e_i,\,ua\,\sign{-i},\,0),
\end{aligned}\\
&\!\begin{aligned}
Y_{(i)}(u,\,v,\,a)=Y_{(i)}(u,\,v-e_iv_i-e_{-i}v_{-i}&,\,a-v_iv_{-i}\,\sign i)\cdot\\
\cdot&Y(e_i,\,uv_i,\,0)Y(e_{-i},\,uv_{-i},\,0),
\end{aligned}\\
&X(q+q',\,0,\,a)=Y_{(i)}(q,\,0,\,a)Y(q',\,0,\,a)Y(q',\,qa,\,0),\\
&Y(r,\,s,\,0)Y(r',\,s,\,0)=Y_{(i)}(r+r',\,s,\,0).
\end{align}
\end{lm}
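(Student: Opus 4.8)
Like the proof of Lemma~\ref{xlist}, this is a recollection rather than a fresh argument: the elements $Y_{(i)}(u,v,a)$ are constructed in~\cite{l1} in three stages (pp.~3762, 3764 and~3766), and each of the relations Y0--Y7 is established there, so the proof I would give consists of matching every displayed line with the corresponding lemma of~\cite{l1}. It is nonetheless worth recording the internal logic of that matching, since it shows which lines carry real content; I would run through it stage by stage.

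I would first dispose of the foundational relations. The compatibility Y0 with $\phi$ is built into the construction: at each stage $Y_{(i)}(u,v,a)$ is defined as an explicit product of commutators of elements whose $\phi$-images are known ESD-transformations, and a short computation in $\Ep_{2n}(R)$ shows that this product maps to $T(u,v,a)$; this is checked in~\cite{l1} at every stage. The identity Y3, $Y(e_i,w,a)=X(e_i,w,a)$, belongs to the part of~\cite{l1} that identifies the two presentations of $\St\!\Sp_{2n}(R)$: for $u=e_i$ both symbols satisfy the same defining relations relative to the hyperbolic pair $(e_i,e_{-i})$ and hence coincide. The relations Y4 and Y5 record, respectively, the defining commutator formula for a second argument supported on $e_i$, $e_{-i}$, and the consistency identity that governs the passage to an arbitrary $v$ with $\langle u,v\rangle=0$; both are proven in~\cite{l1}.

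The remaining relations are the arithmetic of the $Y$'s. Additivity in the second argument Y2 and in the first argument Y7 are proven in~\cite{l1} by direct computation with the defining expressions, and the symplectic van der Kallen relation Y6 for $X(q+q',0,a)$ is then deduced in~\cite{l1} from Y2, Y4, Y5 and the relations X0--X7 of Lemma~\ref{xlist}. (Should any of Y0--Y7 fail to be displayed verbatim in~\cite{l1}, I would recover it from the others, exactly as X5 was recovered from X6, X2 and X3 in the previous lemma.) The one line with genuine content is the index-independence Y1 --- that $Y_{(i)}(u,v,a)=Y_{(j)}(u,v,a)$ as soon as $u$ has the two symmetric pairs of zeros $u_i=u_{-i}=u_j=u_{-j}=0$. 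I would prove it, following~\cite{l1}, by expanding both sides through Y4 and Y5 into products of blocks $Y(e_{\pm i},v,a)$ and $Y(e_{\pm j},v,a)$ and matching the two products by means of the commutation relations among these blocks, which follow from Y2, Y4 and the $X$-relations. This is the longest computation involved and the step I would expect to be the main obstacle. Apart from it, the only care required is bookkeeping: confirming that the support conditions imposed in the statement on $v'$, $q$, $q'$, $r$, $r'$, $s$ agree with those under which~\cite{l1} proves the relations, and that any relation obtained at an intermediate stage of the three-stage construction persists to the final definition of $Y_{(i)}(u,v,a)$.
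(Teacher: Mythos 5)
Your proposal takes essentially the same route as the paper: Lemma~\ref{ylist} is indeed a catalogue of facts about the elements $Y_{(i)}$ established in~\cite{l1}, and the proof is a line-by-line citation rather than a fresh argument. One detail worth correcting: the paper classifies Y4--Y6 as \emph{definitions} in~\cite{l1} (the three-stage construction of $Y_{(i)}(u,v,a)$ and the definition of $X(q+q',0,a)$ in terms of the $Y$'s), not as proved lemmas, whereas you describe Y4, Y5 as ``proven in~\cite{l1}'' and Y6 as ``deduced'' there; and Y3 is not a direct identification but is derived in the paper from Lemma~36 of~\cite{l1} together with Y6, X7 and Y2 --- although you hedge this possibility correctly. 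You also misstate the support condition in Y4 (the vector $v'$ is required to \emph{vanish} on $e_{\pm i}$, not to be supported there). None of these affect the soundness of the approach; they are matters of which lines in~\cite{l1} are constructions and which are consequences.
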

\begin{proof}
For Y0 see section 1 of~\cite{l1}, for Y1 and Y2 see Lemmas~20 and 9 of~\cite{l1}, for Y3 use Lemma~36 of~\cite{l1}, Y6, Lemma~\ref{xlist}(X7) and Y2, Y4--Y6 are actually definitions (pp.~3764, 3766, 3768 and remark on p.~3770 of~\cite{l1}), Y7 is Lemma~28 of~\cite{l1}.
\end{proof}

Let us establish some further properties of $Y_{(i)}(u,\,v,\,a)$.

\begin{lm}
For an index $\,i$ and any vectors $u$, $v$, $w\in R^{2n}$, such that $u_i=u_{-i}=0$, $\lan u,\,v\ran=\lan u,\,w\ran=0$, elements $a$, $b\in R$, one has
\setcounter{equation}{7}
\renewcommand{\theequation}{Y\arabic{equation}}
\begin{align}
&Y_{(i)}(u,\,v,\,a+b)=Y_{(i)}(u,\,v,\,a)Y_{(i)}(u,\,0,\,b),\\
&Y_{(i)}(u,\,v,\,a)=Y_{(i)}(u,\,v-e_iv_i-e_{-i}v_{-i},\,a)Y_{(i)}(u,\,e_iv_i+e_{-i}v_{-i},\,0),\\
&X(u+v,\,0,\,a)=X(u,\,0,\,a)X(v,\,0,\,a)Y_{(i)}(u,\,va,\,0),\\
&Y_{(i)}(u,\,va,\,0)=Y_{(i)}(v,\,ua,\,0)\,\text{ if also }\,v_i=v_{-i}=0,\\
&Y_{(i)}(u,\,v,\,a)=X(u,\,v,\,a),\\
&Y_{(i)}(u,\,v,\,0)Y_{(i)}(u,\,w,\,0)=Y_{(i)}(u,\,v+w,\,\lan v,\,w\ran).
\end{align}
\end{lm}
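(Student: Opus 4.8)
The plan is to derive Y8--Y13 one after another, each from the relations already on hand: Y0--Y7 of Lemma~\ref{ylist}, X0--X7 of Lemma~\ref{xlist}, the van der Kallen relations K1--K6 for the generators $[u,\,v,\,a]$ with $u\in\Ep_{2n}(R)e_1$, and the standard commutator identities $[g,\,hk]=[g,\,h]\cdot h[g,\,k]h\inv$ and $[gh,\,k]=g[h,\,k]g\inv\cdot[g,\,k]$. Two of the known relations carry most of the weight. Relation Y5 splits off the components of the second argument in the positions $\pm i$, producing an explicit ``tail'' $Y(e_i,\,uv_i,\,0)Y(e_{-i},\,uv_{-i},\,0)$ and a scalar shift $-v_iv_{-i}\sign i$; and Y4 rewrites $Y_{(i)}(u,\,v',\,a)$, when $v'_i=v'_{-i}=0$, as a commutator $[Y(e_i,\,u,\,0),\,Y(e_{-i},\,v'\sign i,\,a)]$ times a correction $Y(e_i,\,ua\,\sign{-i},\,0)$, where the ``rank one'' symbols $Y(e_{\pm i},\,\cdot,\,\cdot)$ obey the Heisenberg-type law Y2 and, by Y3, coincide with the corresponding $X$-symbols. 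All conjugations are controlled by X1 (equivalently Y1): conjugating $Y_{(i)}(u,\,v,\,a)$ or $Y(e_{\pm i},\,v,\,a)$ by $Y(e_j,\,w,\,b)$ applies $T(e_j,\,w,\,b)$ to $u$ and $v$, and since $u_i=u_{-i}=0$ and $\langle u,\,v\rangle=0$ one checks that in each situation that occurs these vectors are left unchanged, or altered only by a vector supported on $\pm i$, which is then reabsorbed via Y5; this is what makes the stray correction terms collapse.

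For Y8 I would apply Y5 on both sides, reducing to the case $v_i=v_{-i}=0$ (the common tail commutes with $Y_{(i)}(u,\,0,\,b)$ because $T(e_{\pm i},\,uv_{\pm i},\,0)$ fixes $u$); then expand by Y4, merge the $Y(e_{-i},\,\cdot,\,\cdot)$ factors by Y2, and push the result through the commutator identity for $[g,\,hk]$ until the leftover correction factors $Y(e_i,\,\cdot\,\sign{-i},\,0)$ cancel. Relation Y9 is then immediate from Y5 applied to $Y_{(i)}(u,\,v,\,a)$ and to $Y_{(i)}(u,\,e_iv_i+e_{-i}v_{-i},\,0)$ together with Y8 to absorb the shift. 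Relation Y10 is Y6 when $v$ is supported on $\{\pm i\}$; for a general $v$ one splits $v=(v-e_iv_i-e_{-i}v_{-i})+(e_iv_i+e_{-i}v_{-i})$ and peels the two summands off separately, combining Y6, Y8, Y9 with the relations of Lemma~\ref{xlist}. Relation Y11 follows by writing both $Y_{(i)}(u,\,va,\,0)$ and $Y_{(i)}(v,\,ua,\,0)$ in commutator form via Y4 and Y3 (legitimate since both vectors have zeros in positions $\pm i$ and $\langle u,\,v\rangle=0$) and checking that the two commutators agree by a short computation with X1, X4 and K1.

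Relation Y12, the identification $Y_{(i)}(u,\,v,\,a)=X(u,\,v,\,a)$, is the heart of the matter. Using Y8 and X7 one reduces to the cases $a=0$ and $v=0$. The case $v=0$ is Y6 with $q'=0$ (the $Y$-symbols of the zero vector being trivial). In the case $a=0$ one splits $v$ by Y9, expresses $Y_{(i)}(u,\,v-e_iv_i-e_{-i}v_{-i},\,0)$ through Y4 and Y3 as a commutator of $X$-symbols, recognises it as $X(u,\,v-e_iv_i-e_{-i}v_{-i},\,0)$ by X1 and X4 (here $\langle u,\,v\rangle=0$ is exactly what makes the conjugation collapse), and then restores the $\pm i$ components using Y6, X6 and X7. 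Finally, by Y12 and Y8 relation Y13 becomes the assertion $X(u,\,v,\,0)X(u,\,w,\,0)=X(u,\,v+w,\,0)\,X(u,\,0,\,\langle v,\,w\rangle)$ for $u$ with $u_i=u_{-i}=0$; after rewriting $X(u,\,v,\,0)=[X(e_i,\,u,\,0),\,X(e_{-i},\,v\sign i,\,0)]$ (via Y4, Y3, Y12) it reduces to an identity among the $X$-symbols that one verifies using K1 applied to the unimodular vector $e_{-i}$ and the relations of Lemma~\ref{xlist}.

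The step I expect to be the main obstacle is Y12: matching $Y_{(i)}(u,\,v,\,a)$ with $X(u,\,v,\,a)$ for an \emph{arbitrary} vector $u$ carrying a symmetric pair of zeros, which in general is not unimodular and hence does not lie in the orbit $\Ep_{2n}(R)e_1$ on which the ``$[u,\,v,\,a]$'' presentation of~\cite{l1} is built. The two families of elements come from genuinely different constructions in~\cite{l1} (the $Y$'s from the three-stage procedure keyed to the index $i$), so the identification is not formal; one must carry the correction terms of Y4--Y6 through the conjugation formula X1 down to the lone base case Y3. A secondary but pervasive nuisance throughout is the bookkeeping of those correction terms --- the factors $Y(e_i,\,ua\,\sign{-i},\,0)$ and the tails $Y(e_i,\,uv_i,\,0)Y(e_{-i},\,uv_{-i},\,0)$ --- in the commutator expansions used for Y8 and Y13.
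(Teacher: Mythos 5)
The proposal is in the right spirit---derive Y8--Y13 in sequence from Y0--Y7, X0--X7 and commutator identities, and your diagnosis that Y12 is the heart of the matter is exactly right---but the chain as you lay it out has genuine gaps at Y10 and Y12, and these are precisely the places where the paper leans on adapted versions of Lemmas~21--23 of~\cite{l1} rather than on direct manipulation of Y0--Y7.

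The main gap is in Y10. You claim Y10 ``is Y6 when $v$ is supported on $\{\pm i\}$'' and that the general case follows by splitting $v=\tilde v+v'$ and peeling. But even the $\{\pm i\}$-supported case is not literally Y6: comparing $X(u+v,0,a)=Y_{(i)}(u,0,a)Y(v,0,a)Y(v,ua,0)$ from Y6 with the statement $X(u+v,0,a)=X(u,0,a)X(v,0,a)Y_{(i)}(u,va,0)$ requires $Y(v,ua,0)=Y_{(i)}(u,va,0)$, which is Y11---which you prove afterwards. More seriously, after peeling off $v'$ one is left with $X(u+\tilde v,0,a)$ where both $u$ and $\tilde v$ have zeros at $\pm i$; the plan never addresses this subcase. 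For it, X6 gives $X(u,\tilde v,0)$ where Y10 wants $Y_{(i)}(u,\tilde v a,0)$ (so one needs Y12), and Y6 does not apply because its splitting vector must be $\{\pm i\}$-supported. This is the computational core: the paper establishes Y10 for $v_i=v_{-i}=0$ by redoing the proof of Lemma~23 of~\cite{l1} with the roles of $u$ and $v$ interchanged (and similarly handles the base cases of Y8, Y9 by redoing Lemmas~21, 22), then derives Y11 and Y12 (base case) essentially for free from that, and only at the end bootstraps up to full generality via the careful staging $v_i=v_{-i}=0$, then $u$ with two pairs of zeros, then arbitrary $u$.

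A secondary problem is your proposed direct proof of Y12 for $a=0$. Writing $Y_{(i)}(u,\tilde v,0)=[X(e_i,u,0),X(e_{-i},\tilde v\,\sign i,0)]$ via Y4 and Y3 is fine, but applying X1 yields $X(e_{-i}+u\,\sign i,\,\tilde v\,\sign i,\,0)\,X(e_{-i},\,\tilde v\,\sign i,\,0)^{-1}$, and from here one cannot reach $X(u,\tilde v,0)$ using only X1--X7; one needs a decomposition law like X8 or X10, which is exactly what Lemma~\ref{xlist2} proves from Y12 afterward. The paper instead obtains Y12 (base case, $a=0$) by comparing Y10 with X6, which is clean but depends on having Y10 first---so the dependency order $Y10\Rightarrow Y11\Rightarrow Y12$ in the paper is not interchangeable, and your flat ordering Y10, Y11, Y12 does not resolve the circularity.
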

\begin{proof}
We prove Y8 and Y9 together. 

First, assume that $v_i=v_{-i}=0$. In this assumption one can get Y8 repeating the proof of Lemma~21 of~\cite{l1}. Use Y2 and Y4 to decompose elements and Y3 and X1 to show, that some of them commute (instead of complicated arguments in the original proof). Next, use this result to obtain Y9, more precisely, repeating the proof of Lemma~22 of~\cite{l1} use it instead of Lemma~21. Now, Y8 follows from Y9 in full generality. One needs that $Y_{(i)}(u,\,0,\,b)=X(u,\,0,\,b)$ by Y6, and commutes with $Y_{(i)}(u,\,e_iv_i+e_{-i}v_{-i},\,0)$ by X1.

To obtain Y10--Y13 we also proceed in several steps.

First, consider Y10 and assume that $v_i=v_{-i}=0$. Then, repeat the proof of Lemma~23 of~\cite{l1} interchanging the roles of $u$ and $v$ and using Y3 and X1 instead of the arguments presented there. Obviously, by X1, $X(u,\,0,\,a)$ and $X(v,\,0,\,a)$ commute. Using this fact we get Y11. Then, we can get Y12 in the same assumptions on $v$. For $a=0$ it follows from Y10 and X6, for the general case use Y8 and X7.

Next, consider Y13 and assume that $v_i=v_{-i}=w_i=w_{-i}=0$. By Y4, we have
\begin{multline*}
Y(u,\,v+w,\,\lan v,\,w\ran)=\\=[Y(e_i,\,u,\,0),\,Y(e_{-i},\,(v+w)\,\sign i,\,\lan v,\,w\ran)]Y(e_{-i},\,u\lan v,\,w\ran\sign{-i}).
\end{multline*}
Now, we get
\begin{multline*}
Y(e_{-i},\,(v+w)\,\sign i,\,\lan v,\,w\ran)=Y(e_{-i},\,v\,\sign i,\,0)Y(e_{-i},\,w\,\sign i,\,0)
\end{multline*}
by Y2 and use an identity $[a,\,bc]=[a,\,b]\cdot[a,\,c]\cdot[[c,\,a],\,b]$ for $a=Y(e_i,\,u,\,0)$, $b=Y(e_{-i},\,v\,\sign i,\,0)$, $c=Y(e_{-i},\,w\,\sign i,\,0)$. As above, $[a,\,b]=Y_{(i)}(u,\,v,\,0)$ and $[a,\,c]=Y_{(i)}(u,\,w,\,0)$. One can check, that
$$[[c,\,a],\,b]=Y(e_{-i},\,-u\lan w,\,v\ran\sign i,\,0)$$
with the use of Y3, X1 and Y2.

Now, we prove Y13 for arbitrary $v$, $w$ and $u$ having two pairs of zeros, i.e., such that $u_j=u_{-j}=0$ for $j\neq\pm i$ also. Decompose $v=\tilde v+v'$, where $v'=e_iv_i+e_{-i}v_{-i}$, and similarly for $w$. Use Y9, then Y8, then Y6 and X1 to change the order of factors, then Y8 again to obtain
$$
Y(u,\,v+w,\,\lan v,\,w\ran)=Y(u,\,\tilde v+\tilde w,\,\lan \tilde v,\,\tilde w\ran)Y(u,\,v'+w',\,\lan v',\,w'\ran).
$$
For each factor we can use the previous steps. To reorder the factors in the result use that $Y(u,\,v',\,0)=X(u,\,v',\,0)$ (we already have Y12 for this situation) and X1. Then use Y9 again.

Next, we establish Y10 in full generality. Decompose $v=\tilde v+v'$ as above and use Y6 to obtain
$$
X(u+v,\,0,\,a)=X(u+\tilde v,\,0,\,a)X(v',\,0,\,a)Y(v',\,(u+\tilde v)a,\,0).
$$
For the first factor we can already use Y10, and for the last one we can already use Y13. Reordering factors by X1 we get 
$$
X(u+v,\,0,\,a)=X(u,\,0,\,a)Y_{(i)}(u,\,\tilde va,\,0)Y(v',\,ua,\,0)X(v,\,0,\,a)
$$
with the use of Y6. Decompose $u=\tilde u+u'$, where $u'=e_ju_j+e_{-j}u_{-j}$, then decompose $Y(v',\,ua,\,0)$ by Y9, apply Y11 to each factor and use Y7 to get
\begin{multline*}
Y(v',\,ua,\,0)=Y(v',\,\tilde ua,\,0)Y(v',\,u'a,\,0)=\\=Y_{(j)}(\tilde u,\,v'a,\,0)Y_{(k)}(u',\,v'a,\,0)=Y(u,\,v'a,\,0).
\end{multline*}
Now, we are done by Y9. Proceeding as above, we get Y12 in full generality.

Finally, consider Y13. As above, decompose $v=\tilde v+v'$ and $w=\tilde w+w'$, and get
$$
Y_{(i)}(u,\,v+w,\,\lan v,\,w\ran)=Y_{(i)}(u,\,\tilde v+\tilde w,\,\lan \tilde v,\,\tilde w\ran)Y_{(i)}(u,\,v'+w',\,\lan v',\,w'\ran).
$$
For the first factor Y13 holds by previous steps, for the second one use Y5, then Y8 and Y2. Then change the order of factors. To interchange positions of  $Y(e_i,\,uw_i,\,0)$ and $Y(e_{-i},\,uv_{-i},\,0)$ we need to plug in an extra commutator equal to
$$
Y_{(i)}(-uw_i,\,-uv_{-i}\,\sign i,\,0)=X(u,\,0,\,2w_iv_{-i}\,\sign i)
$$
by Y12 and X5. With the use of Y5 and Y8 one gets
$$
Y_{(i)}(u,\,v'+w',\,\lan v',\,w'\ran)=Y_{(i)}(u,\,v',\,0)Y_{(i)}(u,\,w',\,0).
$$
It remains to change the order of factors and use Y9.
\end{proof}

In the following lemma we collect some new relations among X's which we need in the sequel.

\begin{lm}
\label{xlist2}
Consider $u$, $v$, $w\in R^{2n}$, $a$, $b\in R$, such that $\lan u,\,v\ran=\lan u,\,w\ran=0$. Assume either that $u_i=u_{-i}=0$ or that $v_i=v_{-i}=w_i=w_{-i}=0$. Then one has
\setcounter{equation}{7}
\renewcommand{\theequation}{X\arabic{equation}}
\begin{align}
&X(u+vr,\,0,\,a)=X(u,\,0,\,a)X(v,\,0,\,r^2a)X(u,\,vra,\,0),\\
&X(u,\,va,\,0)=X(v,\,ua,\,0),\\
&X(u,\,v,\,a)X(u,\,w,\,b)=X(u,\,v+w,\,a+b+\lan v,\,w\ran).
\end{align}
\end{lm}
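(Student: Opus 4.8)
The plan is to reduce X8, X9 and X10 to the relations Y8--Y13 by means of Y12, which identifies $X(u,v,a)$ with $Y_{(i)}(u,v,a)$ as soon as $u_i=u_{-i}=0$. First I would record two commutation facts, both immediate from X1: whenever $\lan u,v\ran=0$ the transvection $T(u,0,a)=\phi\bigl(X(u,0,a)\bigr)$ fixes both $u$ and $v$ (recall $\lan u,u\ran=0$), so $X(u,0,a)$ commutes with $X(v,0,b)$ (Fact~A) and with $X(u,v,0)$ (Fact~B); in particular X7 may also be read as $X(u,v,a)=X(u,0,a)X(u,v,0)$.

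\emph{The case $u_i=u_{-i}=0$.} Here X8 is immediate: apply Y10 with second vector $vr$, then X2 to $X(vr,0,a)$ and Y12 to $Y_{(i)}(u,vra,0)$. As X9 is symmetric in $u$ and $v$, we may assume $u_i=u_{-i}=0$; writing $v=\tilde v+e_iv_i+e_{-i}v_{-i}$ with $\tilde v_i=\tilde v_{-i}=0$ one has $\lan u,\,e_iv_i+e_{-i}v_{-i}\ran=0$, so applying Y13, Y11 and Y12 (the cross term in Y13 vanishes) and then Y5, Y6, Y3 to the $\lan e_i,e_{-i}\ran$-part yields
\[
X(u,va,0)=X(\tilde v,ua,0)\,X(u,0,-a^2v_iv_{-i}\sign i)\,X(e_i,uv_ia,0)\,X(e_{-i},uv_{-i}a,0);
\]
the same computation applied to $X(v,ua,0)=X(ua,v,0)=Y_{(i)}(ua,v,0)$ (valid since $(ua)_i=(ua)_{-i}=0$) produces the very same word. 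Finally X7 together with Facts~A and~B reduce X10 to $X(u,v,0)X(u,w,0)=X(u,v+w,\lan v,w\ran)$, and $X(u,v,0)X(u,w,0)=Y_{(i)}(u,v,0)Y_{(i)}(u,w,0)=Y_{(i)}(u,v+w,\lan v,w\ran)=X(u,v+w,\lan v,w\ran)$ by Y13, Y8 and Y12.

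\emph{The case $v_i=v_{-i}=w_i=w_{-i}=0$.} Now X8 reduces to the X9 just proved: by Y10 (the first vector $vr$ carries a symmetric pair of zeros) $X(u+vr,0,a)=X(vr,0,a)X(u,0,a)Y_{(i)}(vr,ua,0)$, where $Y_{(i)}(vr,ua,0)=X(vr,ua,0)=X(u,vra,0)$ by Y12 and X9, after which X2 and Fact~A (for $X(v,0,r^2a)$ and $X(u,0,a)$) finish it. For X9 and X10 the vector $u$ need not possess a zero pair, so I would write $u=\tilde u+u'$ with $u'=e_iu_i+e_{-i}u_{-i}$ and $\tilde u_i=\tilde u_{-i}=0$; since $v$ and $w$ vanish at $\pm i$ one gets $\lan\tilde u,v\ran=\lan u',v\ran=0$, and likewise for $w$. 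Using X4 to bring the vector with a zero pair to the front, Y13 to split off $u'$, and Y5, Y6, Y3 to rewrite $Y_{(i)}(v,u',0)$ as $X(v,0,-u_iu_{-i}\sign i)\,X(e_i,vu_i,0)\,X(e_{-i},vu_{-i},0)$, each term becomes an expression in $X(\tilde u,\ast,\ast)$ and $X(e_{\pm i},\ast,\ast)$, i.e.\ in vectors possessing a symmetric pair of zeros; for those the first case applies, as do Facts~A and~B and the addition rule Y2 for the $Y(e_{\pm i},\cdot,\cdot)$.

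The step I expect to be the main obstacle is exactly this last reassembly, and it is most delicate for X10. After splitting $u$, the ``tail'' factors $X(v,0,c)$, $X(e_i,vu_i,0)$, $X(e_{-i},vu_{-i},0)$ (with $c=-u_iu_{-i}\sign i$) produced by $X(u,v,0)$ do not commute with the factor $X(\tilde u,w,0)$ produced by $X(u,w,0)$; conjugating $X(\tilde u,w,0)$ by $X(v,0,c)$, for instance, turns it into $X(\tilde u,\,w+vc\lan v,w\ran,0)$. One must therefore reorder carefully, absorb each resulting quadratic correction by applying the first-case X10 to $\tilde u$ and to $e_{\pm i}$ and the addition rule Y2 to the $e_{\pm i}$-terms, and verify that all these corrections assemble to precisely $X(u,0,\lan v,w\ran)$. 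This bookkeeping is routine but lengthy; the conceptual content is that the first case already contains everything, and the second is obtained from it by ``splitting off the hyperbolic plane $\lan e_i,e_{-i}\ran$''.
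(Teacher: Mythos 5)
Your overall strategy — reduce X8--X10 to the $Y$-relations via Y12, first treat the case $u_i=u_{-i}=0$, then derive the other case by decomposing $u=\tilde u+u'$ with $u'=e_iu_i+e_{-i}u_{-i}$ — is the same as the paper's. Your first-case arguments for X8 (Y10, X2, Y12), X9 (splitting $v$ at $\pm i$ rather than the paper's double decomposition of both $u$ and $v$, but it works), and X10 (reduce to $a=b=0$ by X7 plus your Fact~B, then Y13 and Y12) are essentially correct, and so are X8 and X9 in the second case (X9 is even literally symmetric under $u\leftrightarrow v$, so the first case already covers it).

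The gap is precisely the step you flag: X10 in the second case. You propose to atomize $X(u',v,0)$ via Y5 into $X(v,0,-u_iu_{-i}\sign i)\,X(e_i,vu_i,0)\,X(e_{-i},vu_{-i},0)$ and then push each small factor past $X(\tilde u,w,0)$, and you concede that reassembling the resulting quadratic corrections into $X(u,0,\lan v,w\ran)$ is left undone. That reassembly \emph{is} the content of the lemma here, so the proof is incomplete at the hard spot. The paper avoids this bookkeeping entirely by keeping $X(u',v,0)$ as a single block and computing in one stroke, using X1 together with the already-established first-case X10, that
$$
[\,X(u',v,0),\,X(\tilde u,w,0)\,]=X(\tilde u,\,u'\lan v,w\ran,\,0),
$$
which makes the reordering of $X(u,v,0)X(u,w,0)=X(\tilde u,v,0)X(u',v,0)X(\tilde u,w,0)X(u',w,0)$ immediate: one obtains $X(\tilde u,v+w,\lan v,w\ran)X(u',v+w,\lan v,w\ran)X(\tilde u,u'\lan v,w\ran,0)$, and the remaining simplification is short. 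I suggest you adopt this commutator computation rather than the piecewise decomposition; your route would most likely go through, but it trades one clean identity for a substantial amount of unverified bookkeeping.
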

\begin{proof}
First, assume $u_i=u_{-i}=0$. Then X8 follows from Y10, X2 and Y12. Denote $u'=e_ju_j+e_{-j}u_{-j}$ and $v'=e_iv_i+e_{-i}v_{-i}$ for some $j\neq\pm i$ and decompose $u=\tilde u+u'$, $v=\tilde v+v'$. Then
$$
X(u,\,va,\,0)=Y_{(i)}(u,\,\tilde va,\,0)Y(\tilde u,\,v'a,\,0)Y(u',\,v'a,\,0)
$$
by Y9 and Y7. Now apply Y11 twice to each factor
$$
X(u,\,va,\,0)=Y_{(i)}(ua,\,\tilde v,\,0)Y(\tilde ua,\,v',\,0)Y(u'a,\,v',\,0).
$$
Next, X10 for $a=b=0$ follows from Y12 and Y13, and the general case from Y8, X7 and X3.

Now, consider the second case $v_i=v_{-i}=w_i=w_{-i}=0$. For X8 use the previous step (X8 and X9). It remains to prove X10. Assume $a=b=0$ (the case of arbitrary $a$ and $b$ may be treated as in the previous step). Denote $u'=e_iu_i+e_{-i}u_{-i}$ and $\tilde u=u-u'$ (previously we took $\pm j$-th components instead). Using X4, Y12 and Y9 we get
$$
X(u,\,v,\,0)=X(\tilde u,\,v,\,0)X(u',\,v,\,0)
$$
and similarly for $w$. Using X1 and the previous case (X10) we can compute
$$
[X(u',\,v,\,0),\,X(\tilde u,\,w,\,0)]=X(\tilde u,\,u'\lan v,\,w\ran,\,0).
$$
Thus, reordering factors and using previous case we get
\begin{multline*}
X(u,\,v,\,0)X(u,\,w,\,0)=\\=X(\tilde u,\,v+w,\,\lan v,\,w\ran)X(u',\,v+w,\,\lan v,\,w\ran)X(\tilde u,\,u'\lan v,\,w\ran,\,0).
\end{multline*}
Now, we can finish the proof with the use of X7, X1, X4, Y12, Y9 and Y6.
\end{proof}

At the end of this section we establish one more relation automatically satisfied by the generators $[u,\,v,\,a]$ from {\it Another presentation}. We need it later to get a map from the relative symplectic Steinberg group to the absolute one. First, observe that for $v\in R^{2n}$ such that $v_{-1}=0$ (equiv., $\lan v,\,e_1\ran=0$) holds
$$
X(e_1+vr,\,0,\,a)=X(e_1,\,0,\,a)X(v,\,0,\,r^2a)X(e_1,\,vra,\,0)
$$
by X8. Next, take $g\in\St\!\Sp_{2n}(R)$ and denote $u=\phi(g)e_1$ and $w=\phi(g) v$. Conjugate the above identity by $g$ and use X1 to get
$$
X(u+wr,\,0,\,a)=X(u,\,0,\,a)X(w,\,0,\,r^2a)X(u,\,wra,\,0).
$$
The above identity holds for any $u\in\Ep_{2n}(R)e_1$ and $w$ orthogonal to it. Finally, take $(u,\,w)\in\Ep_{2n}(R)(e_1,\,e_2)$. Then $u+wr$ also lies in $\Ep_{2n}(R)e_1$ and we can replace X's above by the generators from the {\it Another presentation}.
\begin{lm}
For $(u,\,w)\in\Ep_{2n}(R)(e_1,\,e_2)$ and $a$, $r\in R$ the following identity holds
\renewcommand{\theequation}{K\arabic{equation}}
\setcounter{equation}{6}
\begin{align}
&[u+wr,\,0,\,a]=[u,\,0,\,a][w,\,0,\,ar^2][u,\,war,\,0].
\end{align}
\end{lm}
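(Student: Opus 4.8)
The plan is to specialise relation X8 to the vector $e_1$, to spread the resulting identity over the whole orbit $\Ep_{2n}(R)e_1$ by conjugating inside $\St\!\Sp_{2n}(R)$, and finally to rewrite the $X$'s as the generators $[\cdot,\cdot,\cdot]$ of \emph{Another presentation}.

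First I would apply Lemma~\ref{xlist2}(X8) with $u=e_1$. Since $(e_1)_2=(e_1)_{-2}=0$, the hypothesis of Lemma~\ref{xlist2} holds with $i=2$, so for every $v\in R^{2n}$ with $v_{-1}=0$ (i.e. $\lan e_1,\,v\ran=0$) and all $a,r\in R$ one has
$$X(e_1+vr,\,0,\,a)=X(e_1,\,0,\,a)X(v,\,0,\,r^2a)X(e_1,\,vra,\,0).$$
Next I would conjugate this identity by an arbitrary $g\in\St\!\Sp_{2n}(R)$ and invoke Lemma~\ref{xlist}(X1): writing $u=\phi(g)e_1$ and $w=\phi(g)v$, this yields
$$X(u+wr,\,0,\,a)=X(u,\,0,\,a)X(w,\,0,\,r^2a)X(u,\,wra,\,0).$$
Because $\phi(g)$ is a symplectic isometry, $\lan u,\,w\ran=0$; and conversely, given any $u\in\Ep_{2n}(R)e_1$ and any $w$ with $\lan u,\,w\ran=0$, one may pick $g$ with $\phi(g)e_1=u$ and set $v=\phi(g)\inv w$, which satisfies $\lan e_1,\,v\ran=\lan u,\,w\ran=0$. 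Hence the displayed identity holds for every such pair $(u,w)$.

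It then remains to replace the $X$'s by brackets. I would restrict to pairs $(u,w)\in\Ep_{2n}(R)(e_1,e_2)$: then $u\in\Ep_{2n}(R)e_1$ and $\lan u,\,w\ran=0$ since $\lan e_1,\,e_2\ran=0$, so the identity above applies and reads $X(u+wr,0,a)=X(u,0,a)X(w,0,ar^2)X(u,war,0)$. Moreover $w\in\Ep_{2n}(R)e_1$, because $e_2$ itself lies in $\Ep_{2n}(R)e_1$ (apply a Weyl-type element built from $T_{1,2}$ and $T_{2,1}$), and $u+wr\in\Ep_{2n}(R)e_1$, since $T_{2,1}(r)$ fixes $e_2$ and sends $e_1$ to $e_1+e_2r$, so that $(u+wr,w)\in\Ep_{2n}(R)(e_1,e_2)$ as well. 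Thus all three vectors appearing in a first argument lie in the orbit $\Ep_{2n}(R)e_1$, and the isomorphism between the two presentations of $\St\!\Sp_{2n}(R)$ described in the last section of~\cite{l1} identifies each $X(\cdot,\cdot,\cdot)$ above with the corresponding generator $[\cdot,\cdot,\cdot]$, giving K6. I expect the only point requiring any care to be exactly this last verification that $w$ and $u+wr$ remain in $\Ep_{2n}(R)e_1$; everything else is purely formal manipulation.
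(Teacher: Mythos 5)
Your proposal matches the paper's argument step for step: specialise X8 at $u=e_1$, conjugate by $g$ and apply X1 to spread it over $\Ep_{2n}(R)e_1$, then restrict to $(u,w)\in\Ep_{2n}(R)(e_1,e_2)$ and rewrite the $X$'s as bracket generators after noting that $w$ and $u+wr$ lie in $\Ep_{2n}(R)e_1$. The extra care you take in verifying the X8 hypothesis and the orbit membership of $w$ and $u+wr$ is sound and only makes explicit what the paper leaves implicit.
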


\section{Relative Steinberg groups}

In the present section we give three definitions of a relative symplectic Steinberg group.

For our purposes we can concentrate on splitting ideals $I\trianglelefteq R$, i.e., those ideals for which the natural projection $\rho\colon R\epi R/I$ splits. Obviously, $tR[t]\trianglelefteq R[t]$ is a splitting ideal.

We show that for splitting ideals all three definitions of relative symplectic Steinberg group coincide.

The correct approach to relative Steinberg groups is described in~\cite{n4,n7,n8}. But for splitting ideals we can define it in the following naive way. Afterwards, we show that (for splitting ideals) it coincides with the usual one.

\begin{df}
Let $I\trianglelefteq R$ be  a {\it splitting} ideal. Define {\it the relative symplectic Steinberg group} $\St\!\Sp_{2n}(R,\,I)=\Ker\big(\rho^*\colon \St\!\Sp_{2n}(R)\epi\St\!\Sp_{2n}(R/I)\big)$.
\end{df}

Obviously, $\Ker(\rho_*)$ coincides with the normal subgroup of $\St\!\Sp_{2n}(R)$ generated by $\{X_{ij}(a)\mid a\in I\}$. This is tantamount to saying that applying $\rho^*$ is the same as forcing an additional relation $X_{ij}(a)=1,\ a\in I$, in $\St\!\Sp_{2n}(R)$.

The next definition is a symplectic version of the Keune--Loday presentation in the linear case (see~\cite{n4,n7}). It is a relative version of the definition via Steinberg relations.

For a group $G$ acting on a group $H$ on the left, we will denote the image of $h\in H$ under the homomorphism corresponding to the element $g\in G$ by $\!\,^gh$, the element $\!\,^gh\cdot h\inv$ by $\llbracket g,\,h]$ and the element $h\cdot\!\,^gh\inv$ by $[h,\,g\rrbracket$.

\begin{df}
Let {\it the Keune--Loday relative symplectic Steinberg group} $\St\!\Sp^{\rm KL}_{2l}(R,\,I)$ be a group with the action of the (absolute) Steinberg group $\St\!\Sp_{2l}(R)$ defined by the set of relative generators $Y_{ij}(a), i\neq j,\ a\in I$, subject to the relations
\setcounter{equation}{-1}
\renewcommand{\theequation}{KL\arabic{equation}}
\begin{align}
&Y_{ij}(a)=Y_{-j,-i}(-a\cdot\sign i\cdot\sign j),\\
&Y_{ij}(a)Y_{ij}(b)=Y_{ij}(a+b),\\
&\llbracket X_{ij}(r),\,Y_{hk}(a)]=1,\text{ for }h\not\in\{j,-i\},\ k\not\in\{i,-j\},\\
&\llbracket X_{ij}(r),\,Y_{jk}(a)]=Y_{ik}(ra),\text{ for }i\not\in\{-j,-k\},\ j\neq-k,\\
&\llbracket X_{i,-i}(r),\,Y_{-i,j}(a)]=Y_{ij}(ra\cdot\sign i)Y_{-j,j}(-ra^2),\\
&[Y_{i,-i}(a),\,X_{-i,j}(r)\rrbracket=Y_{ij}(ar\cdot\sign i)Y_{-j,j}(-ar^2),\\
&\llbracket X_{ij}(r),\,Y_{j,-i}(a)]=X_{i,-i}(2\,ra\cdot\sign i),\\
&\!\,^{X_{ij}(a)}\Big(\!\,^{X_{hk}(r)}Y_{st}(b)\Big)=\!\,^{Y_{ij}(a)}\Big(\!\,^{X_{hk}(r)}Y_{st}(b)\Big).
\end{align}
In other words, we consider a free group generated by symbols $(g,\,x)=\!\,^gx$ where $g$ is from the absolute Steinberg group and $x$ is from the set of relative generators, $\St\!\Sp_{2l}(R)$ naturally acts on this free group via $\!\,^f(g,\,x)=(fg,\,x)$ and then we define the relative symplectic Steinberg group as the quotient of the above free group modulo equivariant normal subgroup generated by KL0--KL7.
\end{df}

There is an obvious equivariant mapping from $\St\!\Sp^{\rm KL}_{2l}(R,\,I)$ to $\St\!\Sp_{2l}(R)$ sending $Y_{ij}(a)$ to $X_{ij}(a)$, and its image is the normal subgroup generated by $\{X_{ij}(a)\mid a\in I\}$, i.e., coincides with $\Ker\big(\St\!\Sp_{2n}(R)\epi\St\!\Sp_{2n}(R/I)\big)$.

\begin{lm}
Let $I\trianglelefteq R$ be a splitting ideal. Then the natural map $$\iota\colon\St\!\Sp^{\rm KL}_{2l}(R,\,I)\rightarrow\St\!\Sp_{2l}(R)$$ is injective. In other words, $\St\!\Sp^{\rm KL}_{2l}(R,\,I)=\St\!\Sp_{2l}(R,\,I)$.
\end{lm}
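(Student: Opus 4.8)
The plan is to use the splitting to realise $\St\!\Sp_{2l}(R)$ as a semidirect product and to construct through it a one-sided inverse of $\iota$. Fix a ring section $\sigma\colon R/I\to R$ of $\rho$ and write $\bar r=\rho(r)$, $\hat r=r-\sigma(\bar r)\in I$ for $r\in R$; the map $r\mapsto\hat r$ is additive and satisfies $\widehat{rs}=\hat r\,s+\sigma(\bar r)\,\hat s$. Let $\sigma^{*}\colon\St\!\Sp_{2l}(R/I)\to\St\!\Sp_{2l}(R)$ be the induced homomorphism, so that $\rho^{*}\sigma^{*}=\mathrm{id}$ and $\St\!\Sp_{2l}(R)=\Ker(\rho^{*})\cdot\sigma^{*}\big(\St\!\Sp_{2l}(R/I)\big)$ with trivial intersection.

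First I would form the semidirect product $G=\St\!\Sp^{\rm KL}_{2l}(R,\,I)\rtimes\St\!\Sp_{2l}(R/I)$, where $\St\!\Sp_{2l}(R/I)$ acts on $\St\!\Sp^{\rm KL}_{2l}(R,\,I)$ through $\sigma^{*}$ followed by the given action of $\St\!\Sp_{2l}(R)$. Since $\iota$ is equivariant, the assignment $(y,\bar g)\mapsto\iota(y)\,\sigma^{*}(\bar g)$ is a group homomorphism $\tilde\iota\colon G\to\St\!\Sp_{2l}(R)$; it is onto because its image contains $\Ker(\rho^{*})=\iota\big(\St\!\Sp^{\rm KL}_{2l}(R,\,I)\big)$ and $\sigma^{*}\big(\St\!\Sp_{2l}(R/I)\big)$, and applying $\rho^{*}$ shows that $\iota(y)\sigma^{*}(\bar g)=1$ forces $\bar g=1$ and then $\iota(y)=1$, so $\Ker\tilde\iota=\Ker\iota\times\{1\}$. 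Hence it suffices to prove that $\tilde\iota$ is injective, and for that it is enough to produce a homomorphism $\psi\colon\St\!\Sp_{2l}(R)\to G$ with $\psi\circ\tilde\iota=\mathrm{id}_{G}$.

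I define $\psi$ on the Steinberg generators by $\psi\big(X_{ij}(r)\big)=\big(Y_{ij}(\hat r),\,X_{ij}(\bar r)\big)$, the second coordinate being a generator of $\St\!\Sp_{2l}(R/I)$, and, since $\St\!\Sp_{2l}(R)$ is presented by S0--S5, I must check that these images satisfy S0--S5 in $G$. Relations S0 and S1 reduce immediately to KL0, KL1 and the additivity of $r\mapsto\hat r$, using $\llbracket X_{ij}(\sigma\bar a),\,Y_{ij}(\hat b)]=1$, the case $(h,k)=(i,j)$ of KL2. The commutator relations S2--S5 form the core: expanding a commutator in $G$ breaks it into a purely $\St\!\Sp_{2l}(R/I)$-part, which is automatic since S2--S5 hold in $\St\!\Sp_{2l}(R/I)$, a purely relative part, and mixed terms of the form ${}^{\sigma^{*}\bar g}Y_{hk}(c)=\llbracket\sigma^{*}\bar g,\,Y_{hk}(c)]\cdot Y_{hk}(c)$, which are evaluated by repeated use of KL2--KL7, occasionally after rewriting with KL0 and S0. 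The failure of $r\mapsto\hat r$ to be multiplicative, namely the summand $\sigma(\bar r)\,\hat s$ above, is exactly what the mixed terms absorb, so that, for instance, the first coordinate of $\psi\big([X_{ij}(a),X_{jk}(b)]\big)$ collapses to $Y_{ik}\big(\widehat{ab}\big)$; the cases S4 and S5 are similar, with the additional squared and doubled arguments. I expect this verification to be the main obstacle: one has to track every occurrence of $\sgn i$ and $\sgn j$, the $b^{2}$ and $2ab$ terms, and the asymmetry between $\llbracket-,-]$ and $[-,-\rrbracket$ encoded in KL4 versus KL5. It amounts to a relative version of the check that the elementary transvections satisfy the Steinberg relations.

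Finally, $\psi\circ\tilde\iota=\mathrm{id}_{G}$ is verified on a generating set of $G$. The group $G$ is generated by the elements $(Y_{ij}(a),1)$, $a\in I$, together with $(1,X_{ij}(\bar r))$, $\bar r\in R/I$: these generate a subgroup containing $\big(1,\St\!\Sp_{2l}(R/I)\big)$, hence all $\big({}^{\sigma^{*}\bar g}Y_{ij}(a),1\big)$, and then, writing an arbitrary $g\in\St\!\Sp_{2l}(R)$ as $\iota(y)\,\sigma^{*}\bar g$ and using the Peiffer-type identity ${}^{\iota(Y_{hk}(c))}z=Y_{hk}(c)\,z\,Y_{hk}(c)\inv$ that follows from KL7, also all $\big({}^{g}Y_{ij}(a),1\big)$, hence the whole of $\big(\St\!\Sp^{\rm KL}_{2l}(R,\,I),1\big)$. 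On these generators $\psi\circ\tilde\iota$ is the identity: $\tilde\iota\big(1,X_{ij}(\bar r)\big)=X_{ij}(\sigma\bar r)$ and $\psi\big(X_{ij}(\sigma\bar r)\big)=\big(Y_{ij}(0),X_{ij}(\bar r)\big)=\big(1,X_{ij}(\bar r)\big)$ since $\widehat{\sigma\bar r}=0$, while $\tilde\iota\big(Y_{ij}(a),1\big)=X_{ij}(a)$ and $\psi\big(X_{ij}(a)\big)=\big(Y_{ij}(a),1\big)$ for $a\in I$. Therefore $\tilde\iota$ is injective, and so is $\iota$.
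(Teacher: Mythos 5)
Your proposal follows essentially the same route as the paper: form the semidirect product $\St\!\Sp^{\rm KL}_{2l}(R,I)\rtimes\St\!\Sp_{2l}(R/I)$, map it to $\St\!\Sp_{2l}(R)$ via $(y,\bar g)\mapsto\iota(y)\sigma^*(\bar g)$, and produce the inverse $\psi\colon X_{ij}(r)\mapsto\big(Y_{ij}(r-\sigma\rho(r)),\,X_{ij}(\rho(r))\big)$, with the crux being the verification of S0--S5 using the KL relations. The only cosmetic difference is that you verify $\psi\circ\tilde\iota=\mathrm{id}$ directly, whereas the paper checks $\tilde\iota\circ\psi=\mathrm{id}$ and then surjectivity of $\psi$; both reduce to the same generating-set argument for the semidirect product.
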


\begin{rm}
The proof is actually the same as in the linear case (see~\cite{n4,n7} and~\cite{n8} for the simply-laced case).
\end{rm}

\begin{proof}
Denote by $\rho\colon R\epi R/I$ the natural projection and by $\sigma\colon R/I\rightarrow R$ its splitting. Then $\St\!\Sp_{2l}(R/I)$ acts on $\St\!\Sp^{\rm KL}_{2l}(R,\,I)$ via $\sigma^*$ and one can consider the semi-direct product $\St\!\Sp^{\rm KL}_{2l}(R,\,I)\leftthreetimes\St\!\Sp_{2l}(R/I)$ which maps to $\St\!\Sp_{2l}(R)$ via $\iota\leftthreetimes\sigma^*$
$$
\St\!\Sp^{\rm KL}_{2l}(R,\,I)\leftthreetimes\St\!\Sp_{2l}(R/I)\rightarrow\St\!\Sp_{2l}(R),\quad (x,\,y)\mapsto\iota(x)\cdot\sigma^*(y).
$$
We construct an inverse map 
$$\psi\colon\St\!\Sp_{2l}(R)\rightarrow\St\!\Sp^{\rm KL}_{2l}(R,\,I)\leftthreetimes\St\!\Sp_{2l}(R/I),$$
sending $$X_{ij}(r)\mapsto\big(Y_{ij}(r-\sigma\rho(r)),\,X_{ij}(\rho(r))\big).$$
Obviously, the fact that $\iota\leftthreetimes\sigma^*$ is an isomorphism implies that $\iota$ is injective.
 
To check that $\psi$ is well-defined one has to verify relations S0--S5 for the images of the generators. Consider, say, S4. We will show that the images of 
$$
X_{i,-i}(a)X_{-i,j}(b)X_{i,-i}(-a)\text{ and }X_{ij}(ab\cdot\sign i)X_{-j,j}(-ab^2)X_{-i,j}(b)
$$ 
under $\psi$ coincide. Indeed,
\begin{multline*}
\begin{aligned}
&\psi\Big(X_{i,-i}(a)X_{-i,j}(b)X_{i,-i}(-a)\Big)=\\
&=\Big(Y_{i,-i}(a-\sigma\rho(a))\,^{X_{i,-i}(\sigma\rho(a))}Y_{-i,j}(b-\sigma\rho(b))\cdot
\end{aligned}\\
\cdot\,^{X_{i,-i}(\sigma\rho(a))X_{-i,j}(\sigma\rho(b))}Y_{i,-i}(-a+\sigma\rho(a)),\\
X_{i,-i}(\rho(a))X_{i,-j}(\rho(b))X_{i,-i}(-\rho(a))\Big).
\end{multline*}
Rewriting 
\begin{multline*}
\,^{X_{-i,j}(\sigma\rho(b))}Y_{i,-i}(-a+\sigma\rho(a))=\\
=Y_{i,-i}(-a+\sigma\rho(a))[Y_{i,-i}(a-\sigma\rho(a)),\,X_{-i,j}(\sigma\rho(b))\rrbracket
\end{multline*}
we get with the use of KL7
\begin{multline*}
\psi\Big(X_{i,-i}(a)X_{-i,j}(b)X_{i,-i}(-a)\Big)=\\
=\Big(\,^{X_{i,-i}(a)}Y_{-i,j}(b-\sigma\rho(b))[Y_{i,-i}(a-\sigma\rho(a)),\,X_{-i,j}(\sigma\rho(b))\rrbracket,\,\\
X_{i,-i}(\rho(a))X_{i,-j}(\rho(b))X_{i,-i}(-\rho(a))\Big),
\end{multline*}
and finally
\begin{multline*}
\begin{aligned}&\psi\Big(X_{i,-i}(a)X_{-i,j}(b)X_{i,-i}(-a)\Big)=\\&=\Big(Y_{ij}((ab-\sigma\rho(ab))\cdot\sign i)\cdot\end{aligned}\\
\cdot Y_{-i,j}(b-\sigma\rho(b))Y_{-j,j}(-ab^2+2\,\sigma\rho(ab)b-\sigma\rho(ab^2)),\\
X_{ij}(\sigma\rho(ab)\cdot\sign i)X_{-j,j}(-\sigma\rho(ab^2))X_{-i,j}(\sigma\rho(b))\Big).
\end{multline*}
On the other hand,
\begin{multline*}
\begin{aligned}&\psi\Big(X_{ij}(ab\cdot\sign i)X_{-j,j}(-ab^2)X_{-i,j}(b)\Big)=\\&=\Big(Y_{ij}((ab-\sigma\rho(ab))\cdot\sign i)\cdot\end{aligned}\\
\cdot Y_{-j,j}(-ab^2+\sigma\rho(ab^2))\cdot\,^{X_{ij}(\sigma\rho(ab)\sign i)}Y_{-i,j}(b-\sigma\rho(b)),\\
X_{ij}(\sigma\rho(ab)\cdot\sign i)X_{-j,j}(-\sigma\rho(ab^2))X_{-i,j}(\sigma\rho(b))\Big).
\end{multline*}
Other relations are similar and much less tedious.

Obviously, $\iota\leftthreetimes\sigma^*\circ\psi=1$ and it only remains to show that $\psi$ is surjective. All elements of types $(1,\,X_{ij}(s))$ and $(Y_{ij}(a),\,1)$ lie in the image of $\psi$, and then elements of type $(\,^{X_{hk}(r)}Y_{ij}(a),\,1)$ lie as well.
\end{proof}

In the proof of the local--global principle we also need another presentation for the relative Steinberg group. It is inspired by the definition of the relative linear Steinberg groups given by Tulenbaev.

\begin{df}
Let {\it Tulenbaev relative symplectic Steinberg group} $\St\!\Sp^{\rm T}_{2n}(R,\,I)$ be a group defined by the set of generators 
$$
\{[u,\,v,\,a,\,b]\in\Ep_{2n}(R)e_1\times R^{2n}\times I\times I\mid \lan u,\,v\ran=0\}
$$
subject to the relations
\setcounter{equation}{-1}
\renewcommand{\theequation}{T\arabic{equation}}
\begin{align}
&[u,\,vr,\,a,\,b]=[u,\,v,\,ra,\,b]\ \ \forall\,r\in R,\\
&[u,\,v,\,a,\,b][u,\,w,\,a,\,c]=[u,\,v,\,a,\,b+c+a^2\lan v,\,w\ran],\\
&[u,\,v,\,a,\,0][u,\,v,\,b,\,0]=[u,\,v,\,a+b,\,0],\\
&[u,\,u,\,a,\,0]=[u,\,0,\,0,\,2a],\\
&[u,\,v,\,a,\,0]=[v,\,u,\,a,\,0]\ \ \forall\,(u,\,v)\in\Ep_{2n}(R)(e_1,\,e_2),\\
&\begin{aligned}\!
[u+vr,\,0,\,0,\,a]=[u,\,0,\,0,\,a][v,\,&0,\,0,\,ar^2][u,\,v,\,ar,\,0]\\&\forall\,r\in R\ \ \forall\,(u,\,v)\in\Ep_{2n}(R)(e_1,\,e_2),
\end{aligned}\\
&\begin{aligned}\!
[u',\,v',\,a',\,b'][u,\,v,\,a,\,b][u',\,&v',\,a',\,b']\inv=\\
&=[T(u',\,v'a',\,b')u,\,T(u',\,v'a',\,b')v,\,a,\,b].
\end{aligned}
\end{align}
\end{df}

There is a natural map $\kappa\colon\St\!\Sp^{\rm T}_{2n}(R,\,I)\rightarrow\St\!\Sp_{2n}(R)$ sending $[u,\,v,\,a,\,b]$ to $[u,\,va,\,b]$ (here we need the relation K7 established in the previous section). Its image is contained in $\Ker\big(\St\!\Sp_{2n}(R)\epi\St\!\Sp_{2n}(R/I)\big)$ and contains all elements of the form $\,^{g}X_{ij}(a)=[\phi(g)e_i,\,\phi(g)e_{-j}\,a\,\sign{-j},\,0]$ and $\,^{g}X_{i,-i}(a)=[\phi(g)e_i,\,0,\,a]$ for $a\in I$, and thus actually coincides with this kernel.

Any triple $(u,\,v,\,a)\in V\times V\times R$ defines a homomorphism $$\alpha_{u,v,a}\colon\St\!\Sp^{\rm T}_{2n}(R,\,I)\rightarrow\St\!\Sp^{\rm T}_{2n}(R,\,I)$$ sending generator $[u',\,v',\,a',\,b']$ to $[T(u,\,v,\,a)u',\,T(u,\,v,\,a)v',\,a',\,b']$. To show that $\alpha_{u,v,a}$ is well-defined we have to check that T0--T6 hold for the images of the generators, but that is straightforward. Next, there exists a well-defined homomorphism $$\St\!\Sp_{2l}(R)\rightarrow\mathrm{Aut}\,(\St\!\Sp^*_{2l}(R,\,I))$$ sending $X(u,\,v,\,a)$ to $\alpha_{u,v,a}$, i.e., the absolute Steinberg group acts on Tulenbaev group. Obviously, we need to verify that K1--K3 hold for $\alpha_{u,v,a}$, but that is also straightforward.

\begin{lm}
\label{allthesame}
Let $I\trianglelefteq R$ be a splitting ideal. Then $$\St\!\Sp^{\rm T}_{2l}(R,\,I)=\St\!\Sp_{2l}(R,\,I)=\St\!\Sp^{\rm KL}_{2l}(R,\,I).$$
\end{lm}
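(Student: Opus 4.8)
The plan is to reduce the assertion to one more injectivity statement and then copy the proof of the previous lemma. By that lemma $\St\!\Sp^{\rm KL}_{2l}(R,\,I)=\St\!\Sp_{2l}(R,\,I)$, so it is enough to identify $\St\!\Sp^{\rm T}_{2l}(R,\,I)$ with $\St\!\Sp_{2l}(R,\,I)$. As already observed, the natural map $\kappa\colon\St\!\Sp^{\rm T}_{2l}(R,\,I)\to\St\!\Sp_{2l}(R)$, $[u,\,v,\,a,\,b]\mapsto[u,\,va,\,b]$, has image exactly $\Ker\big(\St\!\Sp_{2l}(R)\epi\St\!\Sp_{2l}(R/I)\big)=\St\!\Sp_{2l}(R,\,I)$, so everything comes down to showing that $\kappa$ is injective; then $\kappa$ is an isomorphism onto $\St\!\Sp_{2l}(R,\,I)$, which together with the previous lemma proves the statement. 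Fix a splitting $\sigma\colon R/I\to R$ of $\rho$. Through $\sigma^*$ and the action of $\St\!\Sp_{2l}(R)$ on the Tulenbaev group constructed above, $\St\!\Sp_{2l}(R/I)$ acts on $\St\!\Sp^{\rm T}_{2l}(R,\,I)$, and I would construct a two-sided inverse $\psi$ to the evaluation homomorphism
\[
\mathrm{ev}\colon\St\!\Sp^{\rm T}_{2l}(R,\,I)\leftthreetimes\St\!\Sp_{2l}(R/I)\longrightarrow\St\!\Sp_{2l}(R),\quad(x,\,y)\longmapsto\kappa(x)\cdot\sigma^*(y).
\]

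The homomorphism $\psi$ would send the Steinberg generators to
\[
X_{ij}(r)\longmapsto\big([e_i,\,e_{-j}\sign{-j},\,r-\sigma\rho(r),\,0],\ X_{ij}(\rho r)\big)\quad(j\neq-i),
\]
\[
X_{i,-i}(r)\longmapsto\big([e_i,\,0,\,0,\,r-\sigma\rho(r)],\ X_{i,-i}(\rho r)\big);
\]
these are legitimate since $r-\sigma\rho(r)\in I$, and applying $\mathrm{ev}$ recombines the two slots into $X_{ij}(r-\sigma\rho r)\cdot X_{ij}(\sigma\rho r)=X_{ij}(r)$ (and similarly for $X_{i,-i}$), so that $\mathrm{ev}\circ\psi=\mathrm{id}$ already on generators. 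The step I expect to be the main obstacle is checking that $\psi$ is well defined, i.e.\ that the images of the generators satisfy the Steinberg relations S0--S5 in the semidirect product. This is a long but mechanical verification, carried out exactly as in the proof of the previous lemma, using the defining relations T0--T6 and the explicit formula for the action $\alpha_{u,v,a}$; S0--S3 are short, S4 and S5 are the tedious ones, and no new idea enters.

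Granting well-definedness, what remains is that $\psi$ is onto, whence $\psi$ and $\mathrm{ev}$ are mutually inverse and $\kappa$ is injective. For $s\in R/I$ one has $\psi(X_{ij}(\sigma s))=(1,\,X_{ij}(s))$, since $\sigma s-\sigma\rho\sigma s=0$ and a generator of $\St\!\Sp^{\rm T}_{2l}(R,\,I)$ with its last two arguments zero is trivial by T1 and T2; hence $1\leftthreetimes\St\!\Sp_{2l}(R/I)$ lies in the image. For $a\in I$ one gets $\psi(X_{ij}(a))=\big([e_i,\,e_{-j}\sign{-j},\,a,\,0],\,1\big)$ and $\psi(X_{i,-i}(a))=\big([e_i,\,0,\,0,\,a],\,1\big)$, so the ``basic'' relative generators, paired with $1$, lie in the image. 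Since $(\St\!\Sp^{\rm T}_{2l}(R,\,I),\,\kappa)$ is a crossed module over $\St\!\Sp_{2l}(R)$ — the Peiffer identity $\!\,^{\kappa(z)}z'=zz'z\inv$ being precisely relation T6 read together with X0 — a short computation in the semidirect product gives $\psi\big(h\,X_{ij}(a)\,h\inv\big)=\big(\!\,^h[e_i,\,e_{-j}\sign{-j},\,a,\,0],\,1\big)$, and likewise for the $X_{i,-i}$'s, for every $h\in\St\!\Sp_{2l}(R)$ and $a\in I$, where $\!\,^h$ denotes the action. Thus the image of $\psi$ contains $\big(\!\,^h(\text{basic generator}),\,1\big)$ for all $h$. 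Finally — and this is the only genuinely new point — these elements already generate $\St\!\Sp^{\rm T}_{2l}(R,\,I)$: an arbitrary generator $[u,\,v,\,a,\,b]$ equals $\!\,^g[e_1,\,\phi(g)\inv v,\,a,\,b]$ for any $g$ with $\phi(g)e_1=u$, and a generator $[e_1,\,w,\,a,\,b]$ with $\lan w,\,e_1\ran=0$ is written as a product of basic generators by splitting $w$ into basis components via T1, moving scalars into the third slot by T0 (in particular $[e_1,\,0,\,a',\,b']=[e_1,\,0,\,0,\,b']$ by T0 with the scalar $0$), and absorbing the $e_1$-component by T3. Hence $\St\!\Sp^{\rm T}_{2l}(R,\,I)\leftthreetimes1$ lies in the image of $\psi$, so $\psi$ is surjective, and the lemma follows.
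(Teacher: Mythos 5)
Your proof is correct but takes a genuinely different route. Having already identified $\St\!\Sp^{\rm KL}_{2l}(R,\,I)$ with $\St\!\Sp_{2l}(R,\,I)$, the paper constructs a direct equivariant homomorphism $\theta\colon\St\!\Sp^{\rm KL}_{2l}(R,\,I)\rightarrow\St\!\Sp^{\rm T}_{2l}(R,\,I)$ by introducing the elements $Y^*_{ij}(a)=[e_i,\,e_{-j},\,a\,\sign{-j},\,0]$ and $Y^*_{i,-i}(a)=[e_i,\,0,\,0,\,a]$ inside the Tulenbaev group and verifying that they satisfy KL0--KL7 (with KL4 worked out in detail); then $\kappa\theta=1$ gives injectivity of $\theta$, and surjectivity follows from the same generation argument you sketch at the end. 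You instead rerun the semidirect-product scheme of the previous lemma with $\St\!\Sp^{\rm T}$ in place of $\St\!\Sp^{\rm KL}$, building $\psi\colon\St\!\Sp_{2l}(R)\rightarrow\St\!\Sp^{\rm T}_{2l}(R,\,I)\leftthreetimes\St\!\Sp_{2l}(R/I)$ and checking S0--S5 directly in the semidirect product. The two verifications are closely related: checking S4 for your $\psi$ would reproduce both the bookkeeping of the previous lemma and the content of the paper's KL4 check for $Y^*$, so the paper's factorization through $\St\!\Sp^{\rm KL}$ is the more economical of the two and yields an explicit equivariant isomorphism $\theta$ as a byproduct. In compensation, your route surfaces the crossed-module structure of $\big(\St\!\Sp^{\rm T}_{2l}(R,\,I),\,\kappa\big)$ explicitly --- the Peiffer identity is indeed T6 read against K3 and X1 --- which makes the surjectivity argument conceptually transparent. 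Two small points worth adding: that $\mathrm{ev}$ is a homomorphism requires the equivariance $\kappa\big(\!\,^{g}z\big)=g\,\kappa(z)\,g\inv$, which holds by K3 and the definition of the action $\alpha$; and the final generation step is made precise by the paper's explicit decomposition $[e_1,\,v,\,a,\,b]=[e_1,\,0,\,0,\,b-a^2\sum v_kv_{-k}]\prod[e_1,\,e_k,\,v_ka,\,0]$, which is exactly what your ``split $w$ into basis components'' phrase must produce.
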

\begin{proof}

We identify $\St\!\Sp_{2l}(R,\,I)$ with $\St\!\Sp^{\rm KL}_{2l}(R,\,I)$ and construct a map inverse to $\kappa$. With this end define 
$$
Y^*_{ij}(a)=[e_i,\,e_{-j},\,a\,\sign{-j},\,0]\quad\text{and}\quad Y^*_{i,-i}(a)=[e_i,\,0,\,0,\,a]
$$
inside $\St\!\Sp^{\rm T}_{2l}(R,\,I)$. These elements satisfy relations KL0--KL7. KL0--KL2 and KL7 are obvious. Consider, say, KL4.
\begin{multline*}
\llbracket X_{i,-i}(r),\,Y^*_{-i,j}(b)]=\\
=[e_{-j},\,T(e_i,\,0,\,r)e_{-i},\,b\eps{-j},\,0][e_{-j},\,e_{-i},\,b\eps{-j},\,0]\inv=\\
=[e_{-j},\,e_ir\eps i,\,b\eps{-j},\,-rb^2]=\\
=[e_{-j},\,e_ir\eps i,\,b\eps{-j},\,0][e_{-j},\,0,\,b\eps{-j},\,-rb^2]=\\
=Y^*_{ij}(rb\eps i)\cdot Y^*_{-j,j}(-rb^2);
\end{multline*}
One can check other relations similarly. For KL5 use T5 and for KL6 use T3. Thus, we have a map $\theta\colon\St\!\Sp^{\rm KL}_{2l}(R,\,I)\rightarrow\St\!\Sp^{\rm T}_{2l}(R,\,I)$ preserving the action. Obviously, $\kappa\theta=1$ thus $\theta$ is injective. It remains to show that it is also surjective. First, observe that $[e_1,\,v,\,a,\,b]=[e_1,\,0,\,0,\,b-a^2\sum v_kv_{-k}]\prod[e_1,\,e_k,\,v_ka,\,0]$ lie in the image of $\theta$ (here $v_k$ is a $k$-th coordinate of $v$; we use that $v_{-1}=0$ and T3). Thus, all generators $[u,\,v,\,a,\,b]$ lie in the image of $\theta$, since it preserves the action.
\end{proof}

In the sequel for splitting ideals we do not distinguish relative Steinberg groups defined in this section.

\section{Local-global principle}

In this section we prove the Main Theorem.

Fix a non-nilpotent element $a\in R$. Let $\lambda_a\colon R\rightarrow R_a$ be a principal localisation of $R$ in $a$.

For any $x\in R[t]$ consider the evaluation map $\mathrm{ev}_x\colon R[t]\rightarrow R[t]$, which is the only $R$-algebra homomorphism sending $t$ to $x$. For $p\in R[t]$ denote its image under $\mathrm{ev}_x$ by $p(x)$, e.g., $p=p(t)$. Similarly, for $g\in\St\!\Sp_{2n}(R[t])$ denote its image under $\mathrm{ev}_{x}^*$ by $g(x)$. We claim the following.

\begin{lm}
\label{tul-inj}
Consider $g(t)\in\St\!\Sp_{2n}(R[t],\,tR[t])$ such that $$\lambda_{a}^*(g(t))=1\in\St\!\Sp_{2n}(R_a[t]).$$ Then there exists an $N\in\mathbb N$ such that $g(a^Nt)=1$. Similarly, assume that
$$\lambda_{a}^*(g(t))\in\mathrm{Im}\big(\St\!\Sp_{2n-2}(R_a[t])\rightarrow\St\!\Sp_{2n}(R_a[t])\big).$$
Then there exists an $N\in\mathbb N$ such that 
$$g(a^Nt)\in\mathrm{Im}\big(\St\!\Sp_{2n-2}(R[t],\,tR[t])\rightarrow\St\!\Sp_{2n}(R[t],\,tR[t])\big).$$
\end{lm}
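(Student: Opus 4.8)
The plan is to treat this as a symplectic $\Kt$-analogue of the classical Suslin--Tulenbaev argument for the local-global principle. The central object is a \emph{Tulenbaev-type homomorphism}: given that $\lambda_a^*(g(t))=1$ in $\St\!\Sp_{2n}(R_a[t])$, I want to produce, for a large enough power $a^N$, a factorization of $g(a^Nt)$ through a relative Steinberg group $\St\!\Sp_{2n}(R[t],\,tR[t])$ that is already ``defined over $R$'' in the sense that it becomes trivial after we apply $\mathrm{ev}_{a^Nt}^*$. Concretely, since $g(t)\in\St\!\Sp_{2n}(R[t],\,tR[t])$, I may use Lemma~\ref{allthesame} to represent $g(t)$ as a word in the Tulenbaev generators $[u,\,v,\,a,\,b]$, and the hypothesis $\lambda_a^*(g(t))=1$ means this word becomes a consequence of the defining relations T0--T6 over $R_a[t]$. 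Clearing denominators in the finitely many relations used, and using the relation K7 / the homomorphism properties $\alpha_{u,v,a}$ established after the definition of $\St\!\Sp^{\rm T}$, I obtain an identity valid over $R[t]$ after substituting $t\mapsto a^N t$ for $N$ large.

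The key steps, in order, would be: \textbf{(1)} Reduce to the principal-localisation situation already set up in the section (this is given). \textbf{(2)} Lift the triviality of $\lambda_a^*(g(t))$ to a finite ``witness'': a finite list of Tulenbaev generators over $R_a[t]$ and a finite sequence of applications of T0--T6 reducing the word for $\lambda_a^*(g(t))$ to the empty word; each such generator has the form $[u,\,v,\,p/a^k,\,q/a^k]$ with $u,v$ over $R[t]$ (rescaling $u$ into $\Ep_{2n}(R[t])e_1$ if needed, absorbing the scalar via X2-type relations and T1). \textbf{(3)} Use the ``change of variable'' endomorphism $t\mapsto a^Nt$ of $R[t]$ to clear all the denominators $a^k$ simultaneously: this is where we pick $N$, large enough that every entry appearing in the witness, after the substitution, lies in $R[t]$ and in the ideal $tR[t]$; here one exploits that $g(t)\in\St\!\Sp_{2n}(R[t],\,tR[t])$ so all coordinates are divisible by $t$, hence by $a^N$ after the substitution. \textbf{(4)} Transport the witness through $\mathrm{ev}_{a^Nt}^*$ and through the identification of relative Steinberg groups to conclude $g(a^Nt)=1$ in $\St\!\Sp_{2n}(R[t],\,tR[t])$. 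For the ``corr.''\ statement, the same argument applies with the subgroup $\St\!\Sp_{2n-2}$ in place of the trivial subgroup: one represents $\lambda_a^*(g(t))$ as an element of the image of $\St\!\Sp_{2n-2}(R_a[t])$ times something trivial, obtains a finite witness, and rescales; the extra point is that the embedding $\St\!\Sp_{2n-2}\hookrightarrow\St\!\Sp_{2n}$ is compatible with all the relations used, so the rescaled element visibly comes from $\St\!\Sp_{2n-2}(R[t],\,tR[t])$.

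The main obstacle I expect is step (2)--(3): controlling the denominators uniformly. A single application of a relation like T6 (the conjugation relation) over $R_a[t]$ involves $T(u',\,v'a',\,b')$, and iterating conjugations can make the denominators grow in a way that is not obviously bounded by a single $N$ unless one is careful to fix, once and for all, a finite presentation-witness and only then choose $N$. The clean way around this is exactly the Tulenbaev-map philosophy: rather than manipulating the triviality proof over $R_a[t]$ directly, one constructs an explicit homomorphism from a ``$a$-rescaled'' relative Steinberg group over $R[t]$ into $\St\!\Sp_{2n}(R[t])$ whose composition with $\lambda_a^*$ recovers the known triviality; this is precisely the content of the ``main technical lemma'' promised for Section~4, and the present Lemma~\ref{tul-inj} should be a corollary of it. So the honest plan is: \emph{defer the denominator bookkeeping to the symplectic Tulenbaev map of Section~4}, and here only record how that map yields the statement. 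The remaining steps (1), (4), and the relative $\St\!\Sp_{2n-2}$ bookkeeping are routine given Lemma~\ref{allthesame} and the relations X0--X10, Y0--Y13, K1--K7.
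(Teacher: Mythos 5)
You correctly identify that the heart of the matter is the symplectic Tulenbaev map of Section~4, and you are right to be suspicious of your own steps~(2)--(3): the ``finite witness plus clear denominators by $t\mapsto a^Nt$'' sketch is not what the paper does and, as you yourself note, iterated applications of T6 make the denominator bookkeeping slippery. What is missing from your plan is the mechanism that actually \emph{delivers} the integer $N$ once the Tulenbaev map is in hand, and this is where the paper's argument differs concretely.

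The paper introduces the ring $B=R\ltimes tR_a[t]$ and observes that $B$ is the direct limit of the system $R[t]\xrightarrow{t\mapsto at}R[t]\xrightarrow{t\mapsto at}\cdots$; since the Steinberg group functor commutes with filtered colimits, $\St\!\Sp_{2n}(B)=\varinjlim\St\!\Sp_{2n}(R[t])$. The Tulenbaev map $\mathrm T\colon\St\!\Sp_{2n}(R_a[t],\,tR_a[t])\rightarrow\St\!\Sp_{2n}(B)$ is then applied with $I=tR_a[t]\trianglelefteq B$ (note that $\lambda_a\colon I\rightarrow I_a$ is an isomorphism precisely because the non-constant coefficients already live in $R_a$), giving the commutative triangle $\varphi_0^*=\mathrm T\circ\lambda_a^*$ on the relative group. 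Hence $\lambda_a^*(g(t))=1$ forces $\varphi_0^*(g(t))=1$ in $\St\!\Sp_{2n}(B)$, and since an element of a filtered colimit of groups is trivial if and only if it is trivial at some finite stage, there is an $N$ with $\psi_{0,N}^*(g(t))=g(a^Nt)=1$. The ``corr.'' case runs the same way through the $\St\!\Sp_{2n-2}$-part of Lemma~\ref{tul}. So: your instinct to ``defer to the Tulenbaev map'' is correct, but the deduction is not routine bookkeeping as you suggest; it requires the auxiliary ring $B$ and the colimit identification to replace your step~(3) entirely, and without them your argument has a genuine gap.
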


Now, we define the symplectic analogue of Tulenbaev map and use it to prove Lemma~\ref{tul-inj}

\begin{df}
Denote
$
B=R\ltimes tR_a[t]
$
the ring with component-wise addition and multiplication given by
$$
(r,\,f)\cdot(s,\,g)=(rs,\,\lambda_a(r)g+f\lambda_a(s)+fg).
$$
One may think of elements of $B$ as polynomials in $t$ with the constant term from $R$ and all other coefficients from $R_a$. 
\end{df}

Consider a direct system of rings
$$
\xymatrix{
R[t]\ar@<-0.0ex>[r]^{\mathrm{ev}_{at}}&R[t]\ar@<-0.0ex>[r]^{\mathrm{ev}_{at}}&R[t]\ar@<-0.0ex>[r]^{\mathrm{ev}_{at}}&\cdots
}
$$
i.e., $(S_i,\,\psi_{ij})_{0\leq i\leq j}$, where $S_i=R[t]$ and $\psi_{ij}\colon t\mapsto a^{j-i}t$. It induces a direct system of Steinberg groups. The following facts are left to the reader.

\begin{lm}  
A system of maps $\varphi_i\colon S_i\rightarrow B$ sending $$p(t)\mapsto\big(p+tR[t],\,\lambda_{a}^*(p)(a^{-i}t)-\lambda_{a}^*(p)(0)\big)$$ induces

\begin{enumerate}
\item
 an isomorphism
$$
\varinjlim S_i\rightarrow ^{\!\!\!\!\!\!\!\sim}B;
$$
\item
an isomorphism
$$
\varinjlim\St\!\Sp_{2n}(S_i)\rightarrow ^{\!\!\!\!\!\!\!\sim}\ \St\!\Sp_{2n}(B).
$$
Indeed, Steinberg group functor commutes with directed limits.
\end{enumerate}
\end{lm}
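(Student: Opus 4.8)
The plan is to prove (a) by hand and to deduce (b) from the general fact that the functor $\St\!\Sp_{2n}(-)$, being presented by generators $X_{hk}(c)$ indexed by ring elements $c$ and by the relations S0--S5, each of which mentions only finitely many ring elements, commutes with filtered colimits.

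For (a) I would first note that, identifying $R[t]/tR[t]$ with $R$, the first coordinate of $\varphi_i(p)$ is $p(0)$, so $\varphi_i(p)=\big(p(0),\,\lambda_a^*(p)(a^{-i}t)-\lambda_a^*(p)(0)\big)$. Each $\varphi_i$ is a ring homomorphism: a direct computation shows that the second coordinate of $\varphi_i(p)\varphi_i(q)$ equals $\lambda_a^*(p)(a^{-i}t)\,\lambda_a^*(q)(a^{-i}t)-\lambda_a^*(p)(0)\,\lambda_a^*(q)(0)=\lambda_a^*(pq)(a^{-i}t)-\lambda_a^*(pq)(0)$, the cross terms of the twisted multiplication on $B$ being arranged for exactly this. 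Compatibility $\varphi_j\circ\psi_{ij}=\varphi_i$ holds because $\lambda_a^*\!\big(p(a^{j-i}t)\big)(a^{-j}t)=\lambda_a^*(p)(a^{-i}t)$, so the $\varphi_i$ induce a ring map $\varphi\colon\varinjlim S_i\to B$. For surjectivity, given $(r,f)\in B$ with $f=\sum_{k=1}^d c_kt^k$, $c_k\in R_a$, choose $m\in\mathbb N$ and $r_k\in R$ with $\lambda_a(r_k)=a^mc_k$ in $R_a$; then for any $i\geq m$ the polynomial $p=r+\sum_{k=1}^d r_ka^{ik-m}t^k\in S_i$ satisfies $\varphi_i(p)=(r,f)$. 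For injectivity, if $p\in S_i$ has $\varphi_i(p)=0$, then $p(0)=0$ and, since $t\mapsto a^{-i}t$ is an automorphism of $R_a[t]$, every coefficient $b_k\in R$ of $p$ satisfies $\lambda_a(b_k)=0$, hence $a^{l}b_k=0$ in $R$ for some common $l\in\mathbb N$; then $\psi_{ij}(p)=p(a^{j-i}t)=0$ in $S_j$ once $j-i\geq l$, so $p$ vanishes in $\varinjlim S_i$. Thus $\varphi$ is an isomorphism.

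For (b) I would apply $\St\!\Sp_{2n}$ to the isomorphism $\varinjlim S_i\xrightarrow{\ \sim\ }B$ of (a); it then suffices to show that the canonical comparison homomorphism $\gamma\colon\varinjlim\St\!\Sp_{2n}(S_i)\to\St\!\Sp_{2n}(\varinjlim S_i)$ is an isomorphism, for then the homomorphism induced by the $\varphi_i^*$ is the composite $\St\!\Sp_{2n}(\varphi)\circ\gamma$ and hence an isomorphism too. The map $\gamma$ is surjective because every generator $X_{hk}(c)$ of $\St\!\Sp_{2n}(\varinjlim S_i)$ has $c$ coming from some $S_i$, so $X_{hk}(c)$ lies in the image of $\St\!\Sp_{2n}(S_i)$; and it is injective because, if a word $w$ in such generators is trivial in $\St\!\Sp_{2n}(\varinjlim S_i)$, then $w$ equals a product of conjugates of instances of the relations S0--S5, all of which involve only finitely many elements of $\varinjlim S_i$; together with the arguments occurring in $w$ these finitely many elements all come from a single stage $S_j$ (the index system being directed), so $w=1$ already in $\St\!\Sp_{2n}(S_j)$, i.e.\ $w$ is trivial in $\varinjlim\St\!\Sp_{2n}(S_i)$.

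The only point needing care is the colimit bookkeeping --- that finitely many elements of $\varinjlim S_i$ all come from a common stage (immediate from directedness) and the clearing of denominators in the surjectivity argument for $\varphi$. There is no genuine obstacle here, which is why the lemma is left to the reader.
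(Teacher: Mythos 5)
The paper leaves this lemma to the reader (it explicitly says "The following facts are left to the reader"), so there is no authorial proof to compare against; the only hint given is that the Steinberg group functor commutes with directed limits, which your part (b) invokes exactly. Your argument is correct and fills in the intended details: the homomorphism check for (a) correctly matches the twisted product on $B=R\ltimes tR_a[t]$, the compatibility $\varphi_j\circ\psi_{ij}=\varphi_i$ follows from $a^{j-i}\cdot a^{-j}=a^{-i}$, the surjectivity argument clears denominators in the usual way, and the injectivity argument uses that the transition maps $\psi_{ij}$ eventually kill any polynomial with coefficients annihilated by a power of $a$; part (b) is the standard filtered-colimit argument for a group presented by generators indexed over ring elements and relations each involving finitely many of them.
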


Now, we claim that the composition of $\varphi_{0}^*$ with the inclusion
$$
\xymatrix{
\mu\colon\St\!\Sp_{2n}(R[t],\,tR[t])\ar@{^(->}[r]^{}&\St\!\Sp_{2n}(R[t])\ar@<-0.0ex>[r]^{\varphi_{0}^*}&\St\!\Sp_{2n}(B)
}
$$
factors through the localisation in $a$. More generally, the following statement holds.
\begin{lm}[Tulenbaev map]
\label{tul}
Let $B$ be a ring, $a\in B$, and $I\trianglelefteq B$ be an ideal such that for any $x\in I$ there exists a unique $y\in I$ such that $ya=x$ {\rm(}equivalently, a localisation map $\lambda_a\colon I\rightarrow I_a=I\otimes_RR_a$ is an isomorphism{\rm)}. Then, there exists a map
$$
\mathrm T\colon\St\!\Sp_{2n}^{\mathrm T}(B_a,\,I_a)\rightarrow\St\!\Sp_{2n}(B)
$$
making the diagram
$$
\xymatrix{
\St\!\Sp_{2n}^{\mathrm T}(B,\,I)\ar@<-0.0ex>[rr]^{}\ar@<-0.0ex>[d]_{\lambda_{a}^*}&&\St\!\Sp_{2n}(B)\ar@<-0.0ex>[d]_{\lambda_{a}^*}\\
\St\!\Sp_{2n}^{\mathrm T}(B_a,\,I_a)\ar@<-0.0ex>[rr]^{}\ar@{-->}[rru]_{\mathrm T}&&\St\!\Sp_{2n}(B_a)\\
}
$$
commutative. Moreover, for $g\in\mathrm{Im}\big(\St\!\Sp_{2n-2}^{\mathrm T}(R_a[t],tR_a[t])\rightarrow\St\!\Sp_{2n}^{\mathrm T}(B_a,\,I_a)\big)$ one has $\mathrm T(g)\in\mathrm{Im}\big(\St\!\Sp_{2n-2}(B)\rightarrow\St\!\Sp_{2n}(B)\big)$.
\end{lm}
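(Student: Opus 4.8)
The plan is to define $\mathrm T$ on generators of $\St\!\Sp_{2n}^{\mathrm T}(B_a,\,I_a)$ and then verify that the defining relations T0--T6 are preserved. Given a generator $[u,\,v,\,a',\,b']$ with $u\in\Ep_{2n}(B_a)e_1$, $v\in B_a^{2n}$, $\lan u,v\ran=0$, and $a',b'\in I_a$, the idea is to lift everything to $B$. Since $I_a\cong I$ under the hypothesis on $a$, the elements $a',b'$ have canonical preimages $\bar a,\bar b\in I\trianglelefteq B$. The vector $u$ lies in the elementary orbit $\Ep_{2n}(B_a)e_1$, but we should not expect $u$ itself to lift; instead I would use the presentation of $\St\!\Sp_{2n}^{\mathrm T}$ via the action of the absolute Steinberg group: modulo conjugation, every generator is $\,^{h}Y^*_{ij}(\bar a)$ or $\,^{h}Y^*_{i,-i}(\bar b)$ for some $h\in\St\!\Sp_{2n}(B_a)$. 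The point is that the \emph{relative} part genuinely lives in $I\subseteq B$, while the conjugating element $h$ only needs to be lifted well enough to make sense of the relative element it is twisting; this is exactly the philosophy of the Tulenbaev construction.

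Concretely, I would proceed as follows. First, observe that by Lemma~\ref{allthesame} we may identify $\St\!\Sp_{2n}^{\mathrm T}(B,\,I)$ with $\St\!\Sp_{2n}(B,\,I)=\Ker(\St\!\Sp_{2n}(B)\epi\St\!\Sp_{2n}(B/I))$, and similarly over $B_a$. So the top horizontal map is the inclusion $\St\!\Sp_{2n}(B,I)\hookrightarrow\St\!\Sp_{2n}(B)$ and the bottom one is $\St\!\Sp_{2n}(B_a,I_a)\hookrightarrow\St\!\Sp_{2n}(B_a)$. Second, I would define $\mathrm T$ on the generators $[u,\,v,\,\bar a,\,\bar b]$ of $\St\!\Sp_{2n}^{\mathrm T}(B_a,\,I_a)$ by choosing, for each such generator, a representation as a product of $\Ep_{2n}(B_a)$-conjugates of the elementary relative generators $Y^*_{ij}(c)$ with $c\in I_a$, then replacing $c$ by its preimage in $I$ and each elementary symplectic transvection $T_{ij}(s)\in\Ep_{2n}(B_a)$ appearing in the conjugator by a lift $T_{ij}(\tilde s)\in\Ep_{2n}(B)$ of $s$ to $B$ — here one has freedom in choosing $\tilde s$, and the substance of the argument is that the result is independent of this choice modulo the relations in $\St\!\Sp_{2n}(B)$, because the ambiguity $\tilde s-\tilde s'$ that maps to $0$ in $B_a$ interacts with the relative generators only through the $I$-bilinear/quadratic structure, which is controlled. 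Third, I would check that T0--T6 go over: T0--T3 and T6 follow from the corresponding relations KL0--KL7 / S0--S5 in $\St\!\Sp_{2n}(B)$ essentially as in Lemma~\ref{allthesame}, while T4 and T5 need the orbit hypothesis $(u,v)\in\Ep_{2n}(\cdot)(e_1,e_2)$ together with relations K2 and K6, lifted to $B$. Finally, commutativity of the square is immediate on generators: applying $\lambda_a^*\circ\mathrm T$ to $[u,v,\bar a,\bar b]$ returns the product of $\lambda_a^*$-images of the lifted conjugates, which is the original element; and the composite through the left-then-bottom path on an element of $\St\!\Sp_{2n}^{\mathrm T}(B,I)$ agrees because $\lambda_a^*$ on the relative generators is inverse (on $I$) to the lift we chose. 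The ``moreover'' clause is handled by noting that the whole construction is functorial in the rank: if $g$ comes from $\St\!\Sp_{2n-2}^{\mathrm T}(R_a[t],tR_a[t])$, all the conjugators and relative generators used to express it can be chosen inside the subsystem indexed by $\{-n+1,\ldots,-2,2,\ldots,n-1\}$, hence their lifts land in $\St\!\Sp_{2n-2}(B)$.

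The hard part will be establishing that $\mathrm T$ is well-defined, i.e., independent of all the choices — the choice of how to write a given generator as a product of conjugated elementary relative generators, and the choice of lifts of the conjugating transvections from $\Ep_{2n}(B_a)$ to $\Ep_{2n}(B)$. The first ambiguity is governed precisely by the relative Steinberg relations KL0--KL7 (equivalently the already-verified equivalence of the three presentations), so it can be reduced to checking that $\mathrm T$ respects those relations one at a time. The second, more delicate ambiguity is where the hypothesis ``$\lambda_a\colon I\to I_a$ is an isomorphism'' does its real work: two lifts of the same element of $B_a$ differ by something annihilated by a power of $a$, and when such a difference is fed into the relative generator via relations KL2--KL6 (which are $R$-linear or quadratic in the \emph{relative} argument, and that argument lives in $I$ where multiplication by $a$ is invertible), the discrepancy collapses. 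Making this rigorous — rather than the essentially bookkeeping verification of T0--T6 — is the crux, and it mirrors exactly the corresponding step in Tulenbaev's original linear construction and in the $\mathrm E_l$-case of~\cite{n8}, so I would organise the write-up to isolate this single lemma about choice-independence and then let everything else follow formally.
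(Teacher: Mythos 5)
Your core construction step has a genuine obstruction that the paper's proof is specifically designed to avoid. You propose to express a generator as a product of $\Ep_{2n}(B_a)$-conjugates of elementary relative generators $Y^*_{ij}(c)$, and then ``replace each elementary symplectic transvection $T_{ij}(s)\in\Ep_{2n}(B_a)$ appearing in the conjugator by a lift $T_{ij}(\tilde s)\in\Ep_{2n}(B)$ of $s$ to $B$.'' But $\lambda_a\colon B\to B_a$ is not surjective, so a parameter $s\in B_a$ (say $s=b/a^k$) has no preimage $\tilde s\in B$ in general; the ``freedom in choosing $\tilde s$'' that you rely on is not the issue — the lift may simply fail to exist. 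The hypothesis that $\lambda_a\colon I\to I_a$ is an isomorphism only controls the \emph{ideal} $I$, not the whole of $B$, so while the relative parameters $b,c\in I_a$ do lift uniquely to $I$, the conjugator living over $B_a$ does not.

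The paper's approach circumvents exactly this point. Rather than lifting a conjugator, it lifts \emph{vectors}: given $(u,v,b,c)$ with $u=Me_1$, it sets $w=-Me_{-1}$, multiplies $u$, $v$, $w$ by $a^N$ so their entries land in $\operatorname{Im}\lambda_a$ and can be pulled back to $\tilde u,\tilde v,\tilde w\in B^{2n}$, and pushes the compensating factor $a^{-2N-M}$ into $b$ and $c$ — which is legal precisely because $a$ acts invertibly on $I$. The element $Z\big(\tilde u,\,\tilde v a^M,\,\tfrac b{a^{2N+M}},\,\tfrac c{a^{2N}}\big)$ is then \emph{defined} in $\St\!\Sp_{2n}(B)$ by means of Suslin's Lemma (Lemma~\ref{suslin}): the decomposition $vA=\sum_{i<j}v^w_{ij}$ into pieces each of which has a symmetric pair of zero coordinates is what makes the formula $Z(u;\{v^w_{ij}A\})$ meaningful and well-behaved in the Steinberg group, and Lemmas~\ref{decomposition}--\ref{onemore} are where the independence of choices (of $w$, of $N$, of $M$) is actually proved. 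This Suslin decomposition is the technical heart that your write-up identifies as ``the crux'' but does not supply. Without it, or an equivalent device, you have no way to attach a well-defined element of $\St\!\Sp_{2n}(B)$ to a quadruple whose entries live over $B_a$, because the naive lift does not exist.
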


The next section is devoted to the proof of Lemma~\ref{tul}. Now, we deduce Lemma~\ref{tul-inj} from it.
\begin{proof}[Proof of Lemma~\ref{tul-inj}]
Apply Lemma~\ref{tul} for $a\in R\subseteq B$, $B=R\ltimes tR_a[t]$ as above, $I=tR_a[t]\trianglelefteq B$. Consider the following commutative diagram.
$$
\xymatrix{
\St\!\Sp_{2n}(R[t],\,tR[t])\ar@{^(->}[rr]^{}\ar@<-0.0ex>[rd]_{\varphi_0^*}\ar@<-0.0ex>[dd]_{\lambda_{a}^*}&&\St\!\Sp_{2n}(R[t])\ar@<-0.0ex>[dd]_{\varphi_{0}^*}\\
&\St\!\Sp_{2n}(B,\,I)\ar@{^(->}[rd]^{}\ar@<-0.0ex>[ld]_{\lambda_a^*}&\\
\St\!\Sp_{2n}(R_a[t],\,tR_a[t])\ar@<-0.0ex>[rr]^{\mathrm T}&&\St\!\Sp_{2n}(B)\\
}
$$

Take $g(t)\in\St\!\Sp_{2n}(R[t],\,tR[t])$ such that $\lambda_{a}^*(g(t))=1$. Then 
$$\varphi_0^*(g(t))=(T\circ\lambda_{a}^*)(g(t))=1$$ 
as well, i.\,e., $g(t)$ becomes trivial in the limit. But it can only happen if $\psi_{0,N}^*(g(t))=1$ for some $N$ (use the construction of direct limit as disjoint union modulo an equivalence relation). The proof of the second statement is similar.
\end{proof}

For the next lemma the proof of Lemma~16 of~\cite{n8} works verbatim. There are two references in that proof: instead of Lemma~8 of~\cite{n8} use Lemma~\ref{allthesame}, and instead of Lemma~15 of~\cite{n8} use Lemma~\ref{tul-inj}. The second statement is not proven in~\cite{n8}, but the proofs of both statements are the same.

\begin{lm}
\label{almost}
Consider $a$, $b\in R$ generating $R$ as an ideal, $Ra+Rb=R$. Assume that for $g\in\St\!\Sp_{2n}(R[t], tR[t])$ one has $\lambda_{a}^*(g)=\lambda_{b}^*(g)=1$ . Then $g=1$. Similarly, assume that $\lambda_{a}^*(g)\in\St\!\Sp_{2n-2}(R_a[t], tR_a[t])$ and $\lambda_{b}^*(g)\in\St\!\Sp_{2n-2}(R_b[t], tR_b[t])$. Then $g\in\St\!\Sp_{2n-2}(R[t], tR[t])$.
\end{lm}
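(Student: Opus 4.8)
The plan is to deduce Lemma~\ref{almost} from the local injectivity statement of Lemma~\ref{tul-inj} by a standard Quillen-style patching argument over the two principal localisations, so I will follow the scheme of Lemma~16 of~\cite{n8} essentially verbatim, and the two references that appear in that proof are replaced by Lemma~\ref{allthesame} (the coincidence of the three relative Steinberg groups) and Lemma~\ref{tul-inj}. Before patching, I first need to know that $g\in\St\!\Sp_{2n}(R[t],tR[t])$ really lies in the \emph{relative} Steinberg group in a form I can manipulate, and here Lemma~\ref{allthesame} is crucial: it lets me view $g$ as an element of $\St\!\Sp_{2n}^{\mathrm T}(R[t],tR[t])$, on which the substitutions $t\mapsto a^Nt$ act in a controlled way.

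The key steps, in order, are as follows. First I apply Lemma~\ref{tul-inj} to $a$: since $\lambda_a^*(g)=1$, there is $N$ with $g(a^Nt)=1$ in $\St\!\Sp_{2n}(R[t],tR[t])$ (in the second case, $g(a^Nt)$ lies in the image of $\St\!\Sp_{2n-2}(R[t],tR[t])$). Then I apply Lemma~\ref{tul-inj} to $b$ to get $M$ with $g(b^Mt)=1$ (resp.\ in the $\St\!\Sp_{2n-2}$-image). Now I want to combine the two ``directions''. The trick is to consider, for the relative Steinberg group, the map $R[t]\to R[t]$, $t\mapsto (a^Nx+b^My)t$ where $x,y\in R[t]$, or rather to work in $\St\!\Sp_{2n}(R[t,s],\,(t,s)R[t,s])$ (two polynomial variables) and look at $g$ evaluated along the line $t\mapsto a^Nct$, $t\mapsto b^Mc't$ for $c,c'\in R[t]$: since $Ra^N+Rb^M=R$ (as $Ra+Rb=R$), one can write $1=ca^N+c'b^M$ for suitable $c,c'\in R$, and the two substitutions $t\mapsto a^Nct$ and $t\mapsto b^Mc't$ add up, via the addition structure in the relative group, to the identity substitution $t\mapsto t$; since each of the two summands maps $g$ to $1$ (resp.\ into the $\St\!\Sp_{2n-2}$-image) by the previous step together with functoriality of the Steinberg group under $R$-algebra maps, one concludes $g=1$ (resp.\ $g\in\St\!\Sp_{2n-2}(R[t],tR[t])$). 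This ``homotopy addition'' step is exactly where one uses that $g$ is in the relative group: the difference of two specialisations of a relative element is again a specialisation, and one can interpolate.

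To make the interpolation precise I would introduce $g(t,s)\in\St\!\Sp_{2n}(R[t,s],(t,s)R[t,s])$, the image of $g$ under $t\mapsto a^Nct+b^Mc's$ — here I use the description of the relative group via Lemma~\ref{allthesame} to see that such a substitution with polynomial arguments in the \emph{augmentation ideal} is well-defined on relative generators. Setting $s=0$ gives $g(a^Nct)$, which is $1$ because $g(a^Nt)=1$ and $c\mapsto$ substitution is a ring homomorphism; setting $t=0$ gives $g(b^Mc's)=1$ similarly; and then the standard argument (e.g.\ writing $g(t,s)$ in terms of the one-variable restrictions, or invoking that $\St\!\Sp_{2n}$ of the relevant pullback/amalgam is the pullback — which is what the $b$-adic argument of Lemma~\ref{tul-inj} in disguise gives) forces $g(t,1\cdot\text{at the appropriate value})=1$, hence $g(t)=1$ upon restricting to $ca^N+c'b^M=1$.

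The main obstacle, and the reason the proof is not completely mechanical, is precisely this last interpolation: one must be careful that the substitutions $t\mapsto a^Nct$ with $c$ \emph{not} a zero-divisor-free scalar but an arbitrary ring element are legitimate on the Tulenbaev/Keune--Loday presentation, and that ``adding'' the two partial specialisations is justified — this is where one genuinely needs that we are in the relative group and uses relations like T1, T2 (the ``$a$-bilinearity'' in the extra slot) to see that $g(t+s)=g(t)\cdot{}^{\text{(action)}}g(s)$-type identities hold. In~\cite{n8} this is Lemma~16 and its proof ``works verbatim'' once the two cited lemmas are in place, so I would simply cite that proof, note the two substitutions of references, and remark that the $\St\!\Sp_{2n-2}$-statement is handled identically since every map in sight respects the stabilisation $\St\!\Sp_{2n-2}\hookrightarrow\St\!\Sp_{2n}$.
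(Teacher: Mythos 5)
Your proposal matches the paper's argument exactly: the paper simply remarks that the proof of Lemma~16 of~\cite{n8} carries over verbatim once Lemma~\ref{allthesame} and Lemma~\ref{tul-inj} are substituted for the two internal references, which is precisely the reduction you carry out. One small caution about the sketch you give of the interpolation: the identity actually used is the cocycle-type relation $g((x+y)+u)\,g(u)^{-1}=\big(g(x+(y+u))\,g(y+u)^{-1}\big)\cdot\big(g(y+u)\,g(u)^{-1}\big)$ over the auxiliary variable $u$, applied with $x=ca^{N}t$, $y=c'b^{M}t$ (so that each bracketed factor dies by specialising the one-sided vanishing $g(a^Nt)=1$, resp.\ $g(b^Mt)=1$, along a suitable $R$-algebra map of pairs), rather than the naive ``$g$ of a sum splits as a product of values of $g$'' that your wording suggests; since you explicitly flag this step as the obstacle and defer to the cited proof, this imprecision does not affect the correctness of your argument.
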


Now, the Main Theorem also follows. For the first statement the proof of Theorem~2 of~\cite{n8} can be repeated verbatim. The only reference in that proof is Lemma~16 of~\cite{n8}, which should be replaced by Lemma~\ref{almost}. For the second statement the same proof works.

\section{Tulenbaev map}

This section is devoted to the construction of the map
$$
\mathrm T\colon\St\!\Sp_{2n}^{\mathrm T}(B_a,\,I_a)\rightarrow\St\!\Sp_{2n}(B) 
$$
from Lemma~\ref{tul}. As there, let $B$ be a ring, $a\in B$ a non-nilpotent element, $I\trianglelefteq B$, such that for any $x\in I$ there exists a unique $y\in I$ such that $ya=x$. We denote such a $y$ by $\frac xa$. Elements $\frac x{a^N}$ are also well-defined. The localisation map $\lambda_a\colon I\rightarrow I_a$ is an isomorphism and we identify $I$ and $I_a$.

To define the map $\mathrm T$ we need to find elements $Z(u,\,v,\,b,\,c)\in\St\!\Sp_{2n}(B)$ for any $u\in\Ep_{2n}(B_a)e_1$, $v\in B_a^{2n}$, $\lan u,\,v\ran=0$, and $b$, $c\in I$ subject to relations T0--T6. We start with the following definition.

\begin{df}
For $u$, $v_1,\ldots,v_N\in B^{2n}$ such that $\lan u,\,v_k\ran=0$ for all $k$ define
\begin{multline*}
Z(u;\,v_1,\ldots,v_N)=X(u,\,v_1,\,0)\ldots X(u,\,v_N,\,0)\cdot X(u,\,0,\,-\sum\limits_{i<j}\lan v_i,\,v_j\ran).
\end{multline*}
\end{df}

\begin{lm}
Consider $u$, $v$, $w\in B^{2n}$, $\lan u,\,v\ran=\lan u,\,w\ran=0$. Suppose that $w$ has a pair of zero coordinates, i.e., $w_i=w_{-i}=0$ for some $i$. Then 
$$
[X(u,\,v,\,0),\,X(u,\,w,\,0)]=X(u,\,0,\,2\lan v,\,w\ran).
$$
\end{lm}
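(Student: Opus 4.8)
The plan is to reduce the commutator identity to the relations collected in Lemma~\ref{xlist} and Lemma~\ref{xlist2}. The statement we want is
$$[X(u,\,v,\,0),\,X(u,\,w,\,0)]=X(u,\,0,\,2\lan v,\,w\ran),$$
under the hypothesis $w_i=w_{-i}=0$ for some index $i$. The natural starting point is X10 from Lemma~\ref{xlist2}: since $w$ has a symmetric pair of zero coordinates, the hypothesis ``$u_i=u_{-i}=0$ or $w_i=w_{-i}=0$'' needed for X10 is satisfied (with the roles suitably assigned), so one has $X(u,\,w,\,0)X(u,\,v,\,0)=X(u,\,w+v,\,\lan w,\,v\ran)$, and likewise $X(u,\,v,\,0)X(u,\,w,\,0)=X(u,\,v+w,\,\lan v,\,w\ran)$. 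The point is that $X(u,\,v+w,\,\cdot)$ is symmetric in the arguments $v$ and $w$ up to the scalar term, and $\lan v,\,w\ran=-\lan w,\,v\ran$ since the form is alternating, so comparing the two products one should get $X(u,\,v,\,0)X(u,\,w,\,0)=X(u,\,w,\,0)X(u,\,v,\,0)\cdot X(u,\,0,\,2\lan v,\,w\ran)$ after moving the scalar discrepancy to the right using X3 and X7.

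More concretely, I would compute both $X(u,\,v,\,0)X(u,\,w,\,0)$ and $X(u,\,w,\,0)X(u,\,v,\,0)$ via X10, obtaining $X(u,\,v+w,\,\lan v,\,w\ran)$ and $X(u,\,v+w,\,\lan w,\,v\ran)=X(u,\,v+w,\,-\lan v,\,w\ran)$ respectively. Then, using X7 to split off the scalar part, $X(u,\,v+w,\,\lan v,\,w\ran)=X(u,\,v+w,\,0)X(u,\,0,\,\lan v,\,w\ran)$ and similarly for the other one, and using X3 for additivity of the scalar slot together with X1 to see that $X(u,\,0,\,\cdot)$ is central enough (it commutes with everything of the form $X(u,\,\cdot,\,\cdot)$ by S2-type arguments, or directly because $\phi$ of it is a long root element that commutes appropriately — in fact X1 plus the fact that $X(u,0,c)$ lies in the center of the subgroup generated by the $X(u,\ast,\ast)$), one concludes
$$[X(u,\,v,\,0),\,X(u,\,w,\,0)]=X(u,\,v+w,\,0)X(u,\,0,\,\lan v,\,w\ran)\big(X(u,\,v+w,\,0)X(u,\,0,\,-\lan v,\,w\ran)\big)\inv=X(u,\,0,\,2\lan v,\,w\ran).$$

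The main obstacle I anticipate is making precise why $X(u,\,0,\,c)$ can be pulled out of and past the products freely — i.e. establishing that $X(u,\,0,\,c)$ commutes with $X(u,\,v+w,\,0)$. This should follow from X1 applied to $g=X(u,\,0,\,c)$ together with X0, since $\phi(X(u,\,0,\,c))=T(u,\,0,\,c)$ fixes $u$ and acts trivially enough on the relevant vectors; alternatively it is a direct consequence of the Another Presentation relation K1 which makes the $[u,\,\ast,\,\ast]$ into an abelian-by-scalar family. Once that commutation is in hand, the rest is the bookkeeping with X3 and X7 sketched above, which is routine. I would also double-check at the outset that the parity hypothesis on $w$ is exactly what licenses the two applications of X10 (one with $v$, one with $w$ in the varying slot), since X10 as stated requires either $u$ or the varying vectors to have a symmetric zero pair.
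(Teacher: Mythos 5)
Your plan stalls at the very first step, and the obstruction is not the one you flagged at the end. You write that X10 applies because ``the hypothesis `$u_i=u_{-i}=0$ or $w_i=w_{-i}=0$' needed for X10 is satisfied,'' but that is a misreading of Lemma~\ref{xlist2}. The actual hypothesis for X10 is
$$
u_i=u_{-i}=0\qquad\text{or}\qquad v_i=v_{-i}=w_i=w_{-i}=0,
$$
i.e.\ \emph{both} of the second-slot vectors must have a symmetric pair of zeros (or $u$ does). In the present lemma only $w$ is assumed to have zeros; nothing forces $v$ (or $u$) to have a symmetric pair of zeros, so the equality $X(u,v,0)X(u,w,0)=X(u,v+w,\lan v,w\ran)$ is simply not available, in either order of $v$ and $w$. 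The subsequent bookkeeping with X3, X7 and the centrality of $X(u,0,\cdot)$ is fine, but it never gets off the ground. Note also that the lemma you are proving is precisely what lets one relax X10's two-vector hypothesis on the commutator level; reaching for X10 directly is, at best, circular, and at worst rests on a false premise.

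The paper proceeds quite differently and dodges this issue. One first computes the conjugate by X1,
$$
\!\,^{X(u,v,0)}X(u,w,0)=X\bigl(u,\,w+u\lan v,w\ran,\,0\bigr),
$$
then unwinds this via X6 into a product involving $X(u,0,-1)$, $X(w+u\lan v,w\ran,0,-1)$ and $X(w+u(1+\lan v,w\ran),0,1)$, and decomposes the last two via X8. The point is that X8 only needs \emph{one} of the two vectors involved to have a symmetric pair of zeros, and $w$ plays that role. After the factors $X(u,0,\cdot)$ and $X(w,0,\cdot)$ are collected and cancelled, one is left with $X(w,-u\lan v,w\ran,0)X(w,u(1+\lan v,w\ran),0)$, and \emph{only at this point} is X10 used --- now legitimately, since $w$ sits in the first slot and $w_i=w_{-i}=0$ satisfies the ``$u_i=u_{-i}=0$'' branch of its hypothesis. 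This yields $X(w,u,0)=X(u,w,0)$ by X4 and hence the claim. So the gap in your proposal is concrete: you need the conjugate-then-X6-then-X8 reduction to put $w$ into the privileged slot before X10 can be invoked.
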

\begin{proof}
Use Lemma~\ref{xlist}\,(X1) to compute the conjugate and decompose the result by Lemma~\ref{xlist}\,(X6)
\begin{multline*}
\,^{X(u,\,v,\,0)}X(u,\,w,\,0)=\\=X(u,\,0,\,-1)X(w+u\lan v,\,w\ran,\,0,\,-1)X(w+u(1+\lan v,\,w\ran),\,0,\,1),
\end{multline*}
then decompose the first and the third factors by Lemma~\ref{xlist2}\,(X8). Next, change the order of factors and then simplify the product using Lemmas~\ref{xlist} and \ref{xlist2}. As a result, we have
$$
\,^{X(u,\,v,\,0)}X(u,\,w,\,0)=X(u,\,0,\,2\lan v,\,w\ran)X(w,\,u,\,0).
$$
\end{proof}

For the following corollary use that the symmetric group is generated by fundamental transpositions.

\begin{cl}
Take $u$ and $v_1,\ldots,v_N\in B^{2n}$ such that $\lan u,\,v_k\ran=0$ and each $v_k$ has a pair of zero coordinates. Then for any permutation $\sigma\in S_N$ we have
$$
Z(u;\,v_1,\ldots,v_N)=Z(u;\,v_{\sigma(1)},\ldots,v_{\sigma(N)}).
$$
\end{cl}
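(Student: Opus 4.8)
The plan is to reduce the statement for an arbitrary permutation to the case of a fundamental transposition $\sigma=(k,\,k+1)$, since transpositions of adjacent indices generate $S_N$ and the relation ``$Z$ is symmetric'' is preserved under composition of permutations: if swapping the $k$-th and $(k+1)$-th entries leaves $Z$ unchanged for every choice of the vectors, then applying such swaps repeatedly realises any $\sigma\in S_N$. So it suffices to show
\begin{multline*}
Z(u;\,v_1,\ldots,v_k,\,v_{k+1},\ldots,v_N)=\\=Z(u;\,v_1,\ldots,v_{k+1},\,v_k,\ldots,v_N).
\end{multline*}

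To prove this, I would expand both sides using the definition of $Z$. On the left the relevant consecutive factors are $X(u,\,v_k,\,0)X(u,\,v_{k+1},\,0)$, on the right $X(u,\,v_{k+1},\,0)X(u,\,v_k,\,0)$; all other $X(u,\,v_j,\,0)$ factors occur in the same positions on both sides, and the difference in the correction term $X(u,\,0,\,-\sum_{i<j}\lan v_i,\,v_j\ran)$ between the two orderings is exactly $X(u,\,0,\,-\lan v_k,\,v_{k+1}\ran)$ versus $X(u,\,0,\,-\lan v_{k+1},\,v_k\ran)$, i.e.\ a discrepancy of $X(u,\,0,\,-2\lan v_k,\,v_{k+1}\ran)$ after using X3 and the antisymmetry of $\lan\,,\,\ran$. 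Since $v_{k+1}$ (indeed every $v_j$) has a pair of zero coordinates, the preceding lemma applies and gives
$$
X(u,\,v_k,\,0)X(u,\,v_{k+1},\,0)=X(u,\,0,\,2\lan v_k,\,v_{k+1}\ran)X(u,\,v_{k+1},\,0)X(u,\,v_k,\,0).
$$
By X1, the element $X(u,\,0,\,2\lan v_k,\,v_{k+1}\ran)$ is central in $\St\!\Sp_{2n}(B)$ relative to all $X(u,\,\cdot,\,\cdot)$ (more precisely, it commutes with every $X(u,\,v_j,\,0)$ and every $X(u,\,0,\,c)$, as these all fix $u$ under $\phi$ up to the relevant data, so by X7 and X3 they commute with $X(u,\,0,\,\ast)$). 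Hence it can be pulled out to the front, merged with the correction factor via X3, and the net effect is precisely to convert the left-hand correction term into the right-hand one. This yields the required equality of the two $Z$-expressions.

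The main obstacle is purely bookkeeping: one must verify carefully that $X(u,\,0,\,2\lan v_k,\,v_{k+1}\ran)$ commutes past all the intervening factors $X(u,\,v_j,\,0)$ for $j\neq k,\,k+1$ as well as past the other half of the swapped pair, so that it can be collected with the scalar correction term. This commutation follows from Lemma~\ref{xlist}\,(X1) applied with $g$ a preimage under $\phi$ of a suitable transvection — or, more directly, from the identity in the previous lemma together with X3, X7 — but it should be spelled out, since a priori $\St\!\Sp_{2n}(B)$ is highly non-commutative. Once adjacent swaps are handled, the extension to all of $S_N$ is immediate from the generation of $S_N$ by fundamental transpositions, as noted before the corollary.
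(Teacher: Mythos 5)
Your proposal is correct and is precisely what the paper intends: the paper's proof is the one-line hint that $S_N$ is generated by fundamental transpositions, and you fill in the reduction to an adjacent swap, the commutator identity from the preceding lemma, and the bookkeeping of moving $X(u,\,0,\,2\lan v_k,\,v_{k+1}\ran)$ past the other factors (which follows immediately from X1, since $\phi(X(u,\,v_j,\,0))=T(u,\,v_j,\,0)$ fixes $u$ and hence conjugation leaves $X(u,\,0,\,c)$ unchanged).
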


\begin{df}
For $u$, $v_1,\ldots,v_N\in B^{2n}$ such that $\lan u,\,v_k\ran=0$ and each $v_k$ has a symmetric pair of zero coordinates we denote
$$
Z(u;\,\{v_k\}_{1\leq k\leq N})=Z(u;\,v_1,\ldots,v_N).
$$
\end{df}

Observe also that the following easy fact holds (use Lemma~\ref{xlist}\,(X2) and Lemma~\ref{xlist2}\,(X9).

\begin{lm}
\label{forgotten}
For $u$, $v_1,\ldots,v_N\in B^{2n}$ such that $\lan u,\,v_k\ran=0$ and each $v_k$ has a pair of zero coordinates, $r\in B$, one has
$$
Z(ur;\,\{v_k\}_{1\leq k\leq N})=Z(u;\,\{rv_k\}_{1\leq k\leq N}).
$$
\end{lm}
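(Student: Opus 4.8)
The plan is to expand both sides via the definition of $Z$ and then transform one into the other factor by factor, using only Lemma~\ref{xlist} and Lemma~\ref{xlist2}. Writing out $Z(ur;\,\{v_k\})$, it is a product of the $N$ ``linear'' factors $X(ur,\,v_k,\,0)$ followed by the single ``quadratic'' factor $X\bigl(ur,\,0,\,-\sum_{i<j}\lan v_i,\,v_j\ran\bigr)$, and similarly $Z(u;\,\{rv_k\})$ consists of the factors $X(u,\,rv_k,\,0)$ followed by $X\bigl(u,\,0,\,-\sum_{i<j}\lan rv_i,\,rv_j\ran\bigr)$; so it suffices to match the factors in order.

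For a linear factor I would first flip it by Lemma~\ref{xlist}(X4), $X(ur,\,v_k,\,0)=X(v_k,\,ur,\,0)$, and then apply Lemma~\ref{xlist2}(X9) with $v_k$ in the role of the first vector: the side condition of X9 holds since $v_k$ has a symmetric pair of zero coordinates, and one obtains $X(v_k,\,ur,\,0)=X(u,\,v_kr,\,0)=X(u,\,rv_k,\,0)$, using commutativity of $B$ in the last step. For the trailing factor, Lemma~\ref{xlist}(X2) gives $X(ur,\,0,\,c)=X(u,\,0,\,r^2c)$, and since the symplectic form scales quadratically, $r^2\sum_{i<j}\lan v_i,\,v_j\ran=\sum_{i<j}\lan rv_i,\,rv_j\ran$, which is exactly the scalar occurring on the right-hand side. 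Substituting these $N+1$ equalities into the product yields the claim. I would also note in passing that the right-hand side is meaningful: each $rv_k$ inherits the symmetric pair of zero coordinates of $v_k$ and satisfies $\lan u,\,rv_k\ran=r\lan u,\,v_k\ran=0$, so $Z(u;\,\{rv_k\})$ is well-defined by the preceding Corollary.

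There is no substantive obstacle here; the only point requiring attention is checking the hypothesis of X9 at the step $X(v_k,\,ur,\,0)=X(u,\,rv_k,\,0)$, and this is immediate from the standing assumption that each $v_k$ has a symmetric pair of zero coordinates.
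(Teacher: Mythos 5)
Your argument is correct and is exactly the one the paper intends: it hints at X2 and X9 for this lemma, and you spell out the factor-by-factor matching, with the one extra (harmless) use of X4 to perform the flip before invoking X9, which is the obvious way to do it. Nothing further to add.
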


The following result is well-known (see~\cite{n2,n5}).

\begin{lm}[Symplectic Suslin's Lemma]
\label{suslin}
For $w$, $u$, $v\in B^{2n}$ such that $\lan w,\,u\ran=A\in B$, $\lan u,\,v\ran=0$ denote
$$
v_{ij}=v_{ij}^w=(e_iu_{-j}\,\sign j-e_ju_{-i}\,\sign i)(v_iw_j-v_jw_i)
$$
for any distinct $\,-n\leq i,j\leq n$. Then one has $v_{ij}=v_{ji}$, $\lan u,\,v_{ij}\ran=0$, and
$$
\sum_{i<j}v_{ij}=vA.
$$
\end{lm}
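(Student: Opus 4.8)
The plan is a direct bilinear-algebra computation; there is nothing homotopical here. First I would dispatch the two easy claims. The symmetry $v_{ij}=v_{ji}$ is immediate, since both the vector factor $e_iu_{-j}\sign j-e_ju_{-i}\sign i$ and the scalar factor $v_iw_j-v_jw_i$ are antisymmetric in $i,j$, so their product is symmetric. For $\lan u,\,v_{ij}\ran=0$ I would use the single identity $\lan u,\,e_i\ran=-u_{-i}\sign i$, valid for every index $i$ with the convention $\lan e_k,\,e_{-k}\ran=\sign k$: factoring out the scalar $v_iw_j-v_jw_i$, the remaining bracket becomes $u_{-j}\sign j\cdot\lan u,\,e_i\ran-u_{-i}\sign i\cdot\lan u,\,e_j\ran=-u_{-i}u_{-j}\sign i\sign j+u_{-i}u_{-j}\sign i\sign j=0$.

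The substantive step is the summation formula. I would expand $v_{ij}=(e_iu_{-j}\sign j)(v_iw_j-v_jw_i)-(e_ju_{-i}\sign i)(v_iw_j-v_jw_i)$ and sum over $i<j$. Relabelling $i\leftrightarrow j$ in the second group of summands converts $\sum_{i<j}$ into $\sum_{i>j}$ and, using the antisymmetry of $v_iw_j-v_jw_i$, reproduces the expression $(e_iu_{-j}\sign j)(v_iw_j-v_jw_i)$; since the diagonal terms $i=j$ vanish, the two groups assemble into the unrestricted double sum $\sum_{i,j}(e_iu_{-j}\sign j)(v_iw_j-v_jw_i)$. This double sum splits into two products of ``column'' and ``row'' sums: the $v_iw_j$ part gives $\big(\sum_i e_iv_i\big)\big(\sum_j w_ju_{-j}\sign j\big)=v\lan w,\,u\ran=vA$, while the $v_jw_i$ part gives $\big(\sum_i e_iw_i\big)\big(\sum_j v_ju_{-j}\sign j\big)=w\lan v,\,u\ran=-w\lan u,\,v\ran=0$ by the hypothesis $\lan u,\,v\ran=0$; here I use $\lan x,\,u\ran=\sum_k x_ku_{-k}\sign k$. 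Subtracting the two parts yields $\sum_{i<j}v_{ij}=vA$.

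The only thing to be careful about is the bookkeeping with the sign conventions (the value of $\sign i$, the value of $\lan e_k,\,e_{-k}\ran$, and the expansion $u=\sum_k e_ku_k$) and, crucially, carrying out the passage from $\sum_{i<j}$ to the full double sum by the relabelling trick rather than by symmetrising and dividing by $2$, so that the argument remains valid over an arbitrary commutative ring $B$. Since the statement is classical, one may alternatively simply cite~\cite{n2,n5}.
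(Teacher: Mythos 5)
Your computation is correct. The paper does not actually supply a proof of this lemma---it simply labels it ``well-known'' and cites the references~\cite{n2,n5}---so there is no in-paper argument to compare against. Your direct verification is the standard one: the symmetry $v_{ij}=v_{ji}$ and the orthogonality $\lan u,v_{ij}\ran=0$ are immediate once one has $\lan u,e_i\ran=-u_{-i}\sgn i$, and the summation identity follows by converting $\sum_{i<j}$ to the full unrestricted double sum via the $i\leftrightarrow j$ relabelling (legitimate since the diagonal summands vanish, and avoiding any division by $2$ as you correctly emphasize), then splitting the double sum so that one block gives $v\lan w,u\ran=vA$ and the other $-w\lan v,u\ran=w\lan u,v\ran=0$. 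Everything checks out; as a minor stylistic remark, writing $v_{ij}=f(i,j)+f(j,i)$ with $f(i,j)=(e_iu_{-j}\sgn j)(v_iw_j-v_jw_i)$ makes the reindexing step a one-liner, but the argument is the same.
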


Compare the next result with Lemma~\ref{xlist2}\,(X10): we do not need that $v_i=v_{-i}=0$, but we assume that $\lan v,\,w\ran=0$.

\begin{lm}
\label{orth}
Take $u$, $v$, $w\in B^{2n}$ such that $\lan u,\,v\ran=\lan u,\,w\ran=\lan v,\,w\ran=0$ and $w_i=w_{-i}=0$. Then
$$
X(u,\,v+w,\,0)=X(u,\,v,\,0)X(u,\,w,\,0).
$$
\end{lm}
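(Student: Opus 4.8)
The plan is to mimic the structure of the proof of Lemma~\ref{xlist2}\,(X10), but replace the use of the hypothesis $v_i=v_{-i}=0$ by the extra orthogonality $\langle v,\,w\rangle=0$. The first step is to reduce to the case where $v$ also has a symmetric pair of zeros. Decompose $v=\tilde v+v'$ with $v'=e_iv_i+e_{-i}v_{-i}$ (using the index $i$ already killed in $w$) and $\tilde v_i=\tilde v_{-i}=0$; note $\langle v',\,w\rangle=0$ automatically since $w_i=w_{-i}=0$, hence $\langle\tilde v,\,w\rangle=0$ too. Using Lemma~\ref{xlist2}\,(X10) (applicable because $v'$ has a symmetric pair of zeros in positions $\pm j$ for some $j\neq\pm i$ after a further splitting, or directly via X4/Y-machinery) one writes $X(u,\,v+w,\,0)$ in terms of $X(u,\,\tilde v+w,\,0)$ and $X(u,\,v',\,0)$, and symmetrically $X(u,\,v,\,0)=X(u,\,\tilde v,\,0)X(u,\,v',\,0)$; commuting the $X(u,\,v',\,0)$ factor past $X(u,\,w,\,0)$ by Lemma~\ref{xlist2}\,(X10) (the commutator is $X(u,\,\text{(mult.\ of }v'\text{)}\langle v',w\rangle,0)=1$) reduces the claim to the case $v_i=v_{-i}=0$.

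So assume now $v_i=v_{-i}=w_i=w_{-i}=0$. Here Lemma~\ref{xlist2}\,(X10) applies directly and gives $X(u,\,v,\,0)X(u,\,w,\,0)=X(u,\,v+w,\,\langle v,\,w\rangle)$. Since $\langle v,\,w\rangle=0$ by hypothesis, the right-hand side is exactly $X(u,\,v+w,\,0)$, and we are done in this case.

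The remaining work is to make the reduction step fully rigorous, and this is where the main obstacle lies: in the splitting $X(u,\,v+w,\,0)=X(u,\,\tilde v+w,\,0)X(u,\,v',\,0)$ one must justify that the ``cross term'' vanishes. Concretely, one should compute $[X(u,\,v',\,0),\,X(u,\,\tilde v+w,\,0)]$ via Lemma~\ref{xlist2}\,(X10); since $v'$ has a symmetric pair of zeros this commutator equals $X(u,\,(\text{scalar multiple of }v')\cdot\langle v',\,\tilde v+w\rangle,\,0)$, and $\langle v',\,\tilde v+w\rangle=\langle v',\,\tilde v\rangle+\langle v',\,w\rangle$; the second summand is $0$ since $w_i=w_{-i}=0$, while $\langle v',\,\tilde v\rangle=\langle v,\,v\rangle-\langle\tilde v,\,\tilde v\rangle-\langle v',\,\tilde v\rangle$ forces $\langle v',\,\tilde v\rangle=0$ after noting $\langle v,\,v\rangle=0$ and $\langle v',\,v'\rangle=0$ (any vector is orthogonal to itself under the symplectic form, and the cross terms in $\langle v,v\rangle=\langle\tilde v+v',\tilde v+v'\rangle$ give $2\langle\tilde v,v'\rangle=\langle\tilde v,v'\rangle+\langle v',\tilde v\rangle=0$; but actually $\langle\tilde v,v'\rangle$ need not vanish, so one instead simply tracks it as a bookkeeping term). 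A cleaner route avoiding this subtlety: apply Lemma~\ref{xlist2}\,(X10) to the pair $(u,\,w)$ and $(u,\,v)$ is not available since neither need have the zero pattern; instead decompose only $w$ is already done, so decompose $v$ and carry the term $X(u,\,0,\,\langle\tilde v,\,v'\rangle)$ explicitly through the computation, observing it cancels between the two sides because it appears symmetrically in the expansion of both $X(u,\,v+w,\,0)$ and $X(u,\,v,\,0)X(u,\,w,\,0)$. I expect this bookkeeping of the scalar terms $X(u,\,0,\,*)$ — ensuring every such contribution matches on the two sides — to be the only delicate point; everything else is a routine application of Lemmas~\ref{xlist}, \ref{xlist2} and the preceding lemma of this section.
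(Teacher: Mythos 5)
Your proposal has a genuine gap in the reduction step, and it is exactly the step you already flagged as delicate. You want to write
$$X(u,\,v+w,\,0)=X(u,\,\tilde v+w,\,0)\,X(u,\,v',\,0)\cdot X(u,\,0,\,*)$$
(and similarly $X(u,\,v,\,0)=X(u,\,\tilde v,\,0)\,X(u,\,v',\,0)\cdot X(u,\,0,\,*)$) by Lemma~\ref{xlist2}\,(X10). But X10 requires either $u_j=u_{-j}=0$ for some $j$, or that \emph{both} of the second-slot vectors share a common symmetric pair of zeros at the same index $j$. Here $u$ is arbitrary, $v'=e_iv_i+e_{-i}v_{-i}$ vanishes at all $\pm j$ with $j\neq\pm i$, while $\tilde v+w$ (resp.\ $\tilde v$) vanishes only at $\pm i$; there is no common $j$. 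So X10 is not applicable in the form you need, and the parenthetical ``after a further splitting, or directly via X4/Y-machinery'' does not fix this: splitting $v'$ into $e_iv_i$ and $e_{-i}v_{-i}$ leaves the same mismatch, and X4/X9 only swap the two slots of $X(\cdot,\cdot,0)$, which again forces you to either put zeros in $u$ or in both remaining vectors. The secondary issue you point out — that $\langle\tilde v,\,v'\rangle$ need not vanish and must be bookkept — is real but minor; the structural obstruction is that the decomposition itself is not justified.

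The paper's route avoids this entirely. It does not try to reduce to the ``two zero patterns'' case. Instead it first trades $X(u,\,v+w,\,0)$ for a product of elements of the form $X(\cdot,\,0,\,\pm1)$ via X6, then decomposes the $X(v+w,\,0,\,-1)$ and $X(u+v+w,\,0,\,1)$ factors by X8 (applicable because $w$ has a symmetric pair of zeros), and applies X10 only to $X(w,\,u+v,\,0)$, where the \emph{first} slot is $w$ and hence X10's ``$u_i=u_{-i}=0$'' hypothesis holds. Then mutual orthogonality of $u$, $v$, $w$ (via X1) lets all the resulting factors commute and cancel. If you want a proof along your lines, you would first have to establish an analogue of X10 with only one of the two second-slot vectors having a symmetric zero pair but with $\langle v,\,w\rangle=0$ replacing the missing zeros — which is precisely the content of the lemma you are trying to prove, so the reduction is circular. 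I recommend following the paper's X6/X8 strategy.
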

\begin{proof}
By Lemma~\ref{xlist}\,(X6) we have
$$
X(u,\,v+w,\,0)=X(u,\,0,\,-1)X(v+w,\,0,\,-1)X(u+v+w,\,0,\,1).
$$
Decompose the second and the third factors by Lemma~\ref{xlist2}\,(X8). We get a factor $X(w,\,u+v,\,0)$, and decompose it by Lemma~\ref{xlist2}\,(X10). Vectors $u$, $v$, $w$ are orthogonal, thus all factors commute. Simplify the product by Lemma~\ref{xlist2}\,(X10) and get the claim with the use of Lemma~\ref{xlist}\,(X6).
\end{proof}

\begin{lm}
\label{x=z}
Take $w$, $u$, $v\in B^{2n}$ such that $\lan w,\,u\ran=A$ and $\lan u,\,v\ran=0$. Assume in addition that $v$ has a symmetric pair of zero coordinates. Then
$$
X(u,\,vA,\,0)=Z(u,\,\{v_{ij}\}_{i\leq j}).
$$
\end{lm}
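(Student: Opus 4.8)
The plan is to deduce the identity from the Symplectic Suslin's Lemma together with the ``directional'' additivity of the elements $X(u,-,0)$, organising the decomposition $vA=\sum v_{ij}$ around the symmetric pair of zero coordinates of $v$. Applying Lemma~\ref{suslin} to $w,u,v$ gives $v_{ij}=v_{ji}$, $\lan u,v_{ij}\ran=0$ and $vA=\sum_{i<j}v_{ij}$. Since $v_{ij}$ is a scalar multiple of $e_iu_{-j}\sign j-e_ju_{-i}\sign i$, it is supported in the two positions $i,j$, so (as $n\geq3$) it has a symmetric pair of zero coordinates; hence $Z(u;\{v_{ij}\})$ is defined and, by the Corollary to the preceding commutator lemma, independent of the order of the factors (the term $v_{ii}$ is $0$). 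Thus the claim becomes $Z(u;\{v_{ij}\})=X(u,vA,0)$.

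Fix a symmetric pair $\pm m_0$ of zero coordinates of $v$ and split the index pairs into the \emph{good} ones, those disjoint from $\{m_0,-m_0\}$, and the two \emph{stars} $\{v_{m_0,l}\}_{l\neq\pm m_0}$ and $\{v_{-m_0,l}\}_{l\neq\pm m_0}$ (the term $v_{m_0,-m_0}$ vanishes since $v_{m_0}=v_{-m_0}=0$). I would record two facts. First, every good $v_{ij}$ vanishes at positions $\pm m_0$, so all good terms share the symmetric zero pair $\{m_0,-m_0\}$. Second, using that $\sum_k v_ku_{-k}\sign k=\lan v,u\ran=-\lan u,v\ran=0$, one checks $\sum_l v_{m_0,l}=B_{m_0}v$ and $\sum_l v_{-m_0,l}=B_{-m_0}v$ for suitable scalars $B_{\pm m_0}$. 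Consequently $V_{\mathrm{good}}:=\sum_{\mathrm{good}}v_{ij}=vA-(B_{m_0}+B_{-m_0})v$, and since the form is alternating, $\lan V_{\mathrm{good}},v\ran=0$.

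Now compute the three partial $Z$'s. As the good terms pairwise share $\{m_0,-m_0\}$, iterating Lemma~\ref{xlist2}(X10) telescopes $\prod_{\mathrm{good}}X(u,v_{ij},0)$ to $X(u,V_{\mathrm{good}},\sum\lan v_{ij},v_{kl}\ran)$, whence $Z(u;\{v_{ij}\}_{\mathrm{good}})=X(u,V_{\mathrm{good}},0)$. For a star $\{v_{m_0,l}\}$ one has $\lan v_{m_0,l},v_{m_0,l'}\ran\neq0$ only if $l'=-l$; so I would group the terms into antipodal pairs $\{v_{m_0,l},v_{m_0,-l}\}$, combine each pair by X10 (admissible: $n\geq3$ forces a shared symmetric zero pair), and combine the resulting pairwise orthogonal vectors by Lemma~\ref{orth}, obtaining $Z(u;\{v_{m_0,l}\})=X(u,B_{m_0}v,0)$ and likewise $X(u,B_{-m_0}v,0)$ for the other star, the central corrections matching the defining $X(u,0,-\sum\lan\cdot,\cdot\ran)$ of $Z$ in each case. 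Concatenating the three lists and merging their central factors (which commute past the $X(u,-,0)$'s, every vector involved carrying a symmetric zero pair) gives $Z(u;\{v_{ij}\})$ equal to $X(u,V_{\mathrm{good}},0)\,X(u,B_{m_0}v,0)\,X(u,B_{-m_0}v,0)$ up to a central factor $X\big(u,0,-\lan V_{\mathrm{good}},B_{m_0}v\ran-\lan V_{\mathrm{good}},B_{-m_0}v\ran-\lan B_{m_0}v,B_{-m_0}v\ran\big)$, which is trivial because $\lan V_{\mathrm{good}},v\ran=0=\lan v,v\ran$. Finally the three surviving factors collapse, again by X10 (all sharing $\{m_0,-m_0\}$), to $X(u,V_{\mathrm{good}}+B_{m_0}v+B_{-m_0}v,0)=X(u,vA,0)$, the correction vanishing for the same reason.

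The step I expect to be the main obstacle is making the ``antipodal pairing'' for the stars precise together with the global bookkeeping of the central terms $X(u,0,\cdot)$: one must check that within a star only antipodally indexed generators fail to commute, that the pairwise combined vectors are mutually orthogonal and still carry a symmetric zero pair when $n\geq3$, and then follow every commutation used to merge the three partial products so that the accumulated correction is exactly $-\sum_{(i,j)<(k,l)}\lan v_{ij},v_{kl}\ran$ — the cancellation of the cross-block corrections resting entirely on $\lan V_{\mathrm{good}},v\ran=0$ and the alternation of the symplectic form.
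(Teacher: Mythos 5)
Your proof is correct and takes essentially the same route as the paper: the paper also assumes $v_1=v_{-1}=0$ (WLOG), splits $vA=\sum_{i<j}v_{ij}$ into $p=\sum_{i\neq\pm1}v_{-1,i}$, $q=\sum_{i\neq\pm1}v_{1,i}$ (your two ``stars'') and $r=\sum_{i,j\neq\pm1}v_{ij}$ (your ``good'' block), applies X10 to the three pieces (noting all three vanish at $\pm1$), and resolves the stars by forming the antipodal sums $z_i=v_{-1,i}+v_{-1,-i}$, combining each antipodal pair by X10 and the resulting orthogonal $z_i$'s by Lemma~\ref{orth}. Your extra observation that each block is a scalar multiple of $v$ (so the cross-block pairings vanish) is correct but not logically needed, since the same central correction $X(u,0,-\lan p,q\ran-\lan p,r\ran-\lan q,r\ran)$ appears on both sides of the identity once $Z(u;\{v_{ij}\})$ is written with the block ordering.
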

\begin{proof}
Say, $v_1=v_{-1}=0$. Then $v_{1,-1}=0$. Decompose
$$
vA=\sum_{i<j}v_{ij}=\underbrace{\sum_{i\neq\pm1}v_{-1,i}}_p+\underbrace{\sum_{i\neq\pm1}v_{1,i}}_{q}+\underbrace{\sum_{i,j\neq\pm1}v_{ij}}_r
$$
with the use of Suslin's Lemma (Lemma~\ref{suslin}). Obviously,
$$
p_{-1}=\Big(\sum_{i\neq\pm1}v_{-1,i}\Big)_{-1}=\Big(\sum_{i<j}v_{ij}\Big)_{-1}=v_{-1}A=0
$$
and similarly $q_1=v_1A=0$. Also, $p_1=q_{-1}=r_{-1}=r_1=0$. Thus, by Lemma~\ref{xlist2}\,(X10) we get
\begin{multline*}
X(u,\,vA,\,0)=X(u,\,p+q+r,\,0)=\\
=X(u,\,p,\,0)X(u,\,q,\,0)X(u,\,r,\,0)X(u,\,0,\,-\lan p,\,q\ran-\lan p,\,r\ran-\lan q,\,r\ran).
\end{multline*}
For $1<i\leq n$ denote $z_i=v_{-1,i}+v_{-1,-i}$. Then $\lan z_i,\,z_j\ran=0$ for $i\neq j$, each $z_i$ has a pair of zero coordinates and $\sum_{i=2}^nz_i=p$. By Lemma~\ref{orth} we get
$$
X(u,\,p,\,0)=\prod_{i=2}^nX(u,\,z_i,\,0).
$$
Next, observe that $v_{-1,i}$ and $v_{-1,-i}$ have a common symmetric pair of zero coordinates. Thus, by Lemma~\ref{xlist2}\,(X10), one has
$$
X(u,\,z_i,\,0)=X(u,\,v_{-1,i},\,0)X(u,\,v_{1,i},\,0)X(u,\,0,\,-\lan v_{-1,i},\,v_{1,i}\ran),
$$
so that
$$
X(u,\,p,\,0)=Z(u;\,\{v_{-1,i}\}_{i\neq\pm1}).
$$
Similarly, $X(u,\,q,\,0)=Z(u;\,\{v_{1,i}\}_{i\neq\pm1})$ and by Lemma~\ref{xlist2}\,(X10), $X(u,\,r,\,0)=Z(u;\,\{v_{ij}\}_{i,j\neq\pm1})$ what finishes the proof.
\end{proof}

\begin{lm}
\label{decomposition}
Take $w$, $u$, $v\in B^{2n}$, $\lan w,\,u\ran=A$ and $\lan u,\,v\ran=0$. Consider $x^1,\ldots x^N\in B^{2n}$ such that each $x^k$ has a pair of zero coordinates, $\lan u,\,x^k\ran=0$, and $\sum_{k=1}^Nx^k=vA$. Then one has
$$
Z(u;\,\{x^kA\}_{k=1}^N)=Z(u;\,\{v_{ij}A\}_{i<j}).
$$
\end{lm}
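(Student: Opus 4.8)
The plan is to compute both sides by routing them through the common ``refinement'' family $\{v^k_{ij}\}_{1\le k\le N,\ i<j}$, where for each $k$ the vectors $v^k_{ij}=(e_iu_{-j}\,\sign j-e_ju_{-i}\,\sign i)(x^k_iw_j-x^k_jw_i)$ are the Suslin vectors attached to $x^k$ relative to the same $w$, $u$, and then to exploit the symmetry of $Z(u;\,-)$ (the Corollary above) together with an ``associativity'' identity describing how $Z$ of a family behaves under a partition of its indexing set.

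First I would establish that regrouping identity: if $\{z_s\}_{s\in S}$ is a finite family with $\lan u,\,z_s\ran=0$ and each $z_s$ possessing a symmetric pair of zero coordinates, and $S=\bigsqcup_lS_l$ is a partition, then
$$
Z(u;\,\{z_s\}_{s\in S})=\Bigl(\prod_l Z(u;\,\{z_s\}_{s\in S_l})\Bigr)\,X\Bigl(u,\,0,\,-\sum_{l<m}\Bigl\langle\,\sum_{s\in S_l}z_s,\;\sum_{s\in S_m}z_s\Bigr\rangle\Bigr).
$$
To see this, fix a total order on $S$ refining the partition and expand the left-hand side into the product of the $X(u,\,z_s,\,0)$ followed by a single scalar factor $X(u,\,0,\,\ast)$; collecting the $X(u,\,z_s,\,0)$ blockwise produces the factors $Z(u;\,\{z_s\}_{s\in S_l})$ plus leftover factors of type $X(u,\,0,\,\ast)$, which commute with every $X(u,\,z_s,\,0)$ by Lemma~\ref{xlist2}\,(X10) (as $z_s$ has a symmetric pair of zeros) and hence can be pushed to the right; merging all $X(u,\,0,\,\ast)$ by Lemma~\ref{xlist}\,(X3) leaves exactly the displayed correction, whose argument is the negative of the sum of all cross pairings between distinct blocks. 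Since $Z$ is symmetric, the left-hand side is order-independent and the identity applies to any partition.

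Then the argument runs as follows. Additivity of $v\mapsto v_{ij}$ in $v$ together with $\sum_kx^k=vA$ gives $\sum_kv^k_{ij}=v_{ij}A$, while Symplectic Suslin's Lemma (Lemma~\ref{suslin}) applied to each $x^k$ gives $\lan u,\,v^k_{ij}\ran=0$ and $\sum_{i<j}v^k_{ij}=x^kA$. Each $v^k_{ij}$ is a scalar multiple of the fixed vector $\xi_{ij}=e_iu_{-j}\,\sign j-e_ju_{-i}\,\sign i$, which is supported on the coordinates $i$, $j$ and therefore, since $n\ge3$, has a symmetric pair of zero coordinates; the vectors $x^kA$ and $v_{ij}A$ also have symmetric pairs of zeros. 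So $Z(u;\,\{v^k_{ij}\}_{k,\,i<j})$ makes sense, and I evaluate it by partitioning the triple index set in two ways. Partitioning by $k$ and using Lemma~\ref{x=z} (which yields $Z(u;\,\{v^k_{ij}\}_{i<j})=X(u,\,x^kA,\,0)$), the regrouping identity identifies the result with $\prod_kX(u,\,x^kA,\,0)\cdot X(u,\,0,\,-\sum_{k<l}\lan x^kA,\,x^lA\ran)$, which is precisely $Z(u;\,\{x^kA\}_{k=1}^N)$ by the definition of $Z$. Partitioning instead by the pair $(i,j)$: within each block all vectors are scalar multiples of $\xi_{ij}$ and $\lan\xi_{ij},\,\xi_{ij}\ran=0$, so by Lemma~\ref{xlist2}\,(X10) the block's $Z$ collapses (its own correction also being trivial) to $X(u,\,\sum_kv^k_{ij},\,0)=X(u,\,v_{ij}A,\,0)$; the regrouping identity then identifies the result with $\prod_{i<j}X(u,\,v_{ij}A,\,0)\cdot X(u,\,0,\,-\sum_{(i,j)<(i',j')}\lan v_{ij}A,\,v_{i'j'}A\ran)=Z(u;\,\{v_{ij}A\}_{i<j})$. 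Comparing the two evaluations proves the lemma.

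The step I expect to be the main obstacle is the bookkeeping inside the regrouping identity, precisely the need to verify that the ``correction'' factor $X(u,\,0,\,\ast)$ surviving each of the two regroupings coincides with the one built into the definition of the target family's $Z$. This works out because in each case the surviving correction is $-\sum_{\text{blocks }l<m}\lan P_l,\,P_m\ran$ with $P_l$ the sum of the vectors in block $l$, and the block-sums are, by Suslin's Lemma and by $\sum_kx^k=vA$, exactly $x^kA$ in the first grouping and $v_{ij}A$ in the second. A minor supporting point, needed throughout, is that $n\ge3$ forces every $\xi_{ij}$, and hence every $v^k_{ij}$, to have a symmetric pair of zero coordinates, which is what legitimises all the intermediate $Z$-symbols and the repeated use of (X10).
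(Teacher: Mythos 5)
Your proof is correct and follows essentially the same route as the paper's: both introduce the common Suslin refinement $\{x_{ij}^k\}_{k,i<j}$ (your $v^k_{ij}$), identify it with $Z(u;\{x^kA\})$ by grouping by $k$ and invoking Lemma~\ref{x=z}, and with $Z(u;\{v_{ij}A\})$ by grouping by $(i,j)$ and invoking additivity of $v\mapsto v_{ij}$ together with (X10). You are merely more explicit than the paper in isolating the regrouping/``associativity'' identity for $Z(u;\cdot)$ under a partition and checking the correction-term bookkeeping, which the paper leaves implicit.
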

\begin{proof}
Since $\lan u,\,x^k\ran=0$ consider $x_{ij}^k=(x^k)_{ij}^w$ from Suslin's Lemma (Lemma~\ref{suslin}) and use Lemma~\ref{x=z} to get
$$
X(u,\,x^kA,\,0)=Z(u,\,\{x_{ij}^k\}_{i<j}).
$$
Then, 
$$
Z(u;\,\{x^kA\}_{k=1}^N)=Z(u,\,\{x_{ij}^k\}_{k,\,i<j}).
$$
On the other hand, for fixed $i$ and $j$ all $x_{ij}^k$ are scalar multiples of the same vector having a pair of zero coordinates and
$$
\sum_{k=1}^Nx_{ij}^k=(e_iu_{-j}\,\sign j-e_ju_{-i}\,\sign i)\Big(\big(\sum_{k=1}^Nx_i^k\big)w_j-\big(\sum_{k=1}^Nx_j^k\big)w_i\Big)=v_{ij}A.
$$
Thus,
$$
X(u,\,v_{ij}A,\,0)=\prod_{k=1}^NX(u,\,x_{ij}^k,\,0)
$$
by Lemma~\ref{xlist2}\,(X10) and
$$
Z(u;\,\{v_{ij}A\}_{i<j})=Z(u,\,\{x_{ij}^k\}_{k,\,i<j}).
$$
\end{proof}

\begin{df}
Take $u$, $v\in B^{2n}$ such that $\lan u,\,v\ran=0$ and denote by
$$I(u)=\sum_{k=-n}^nBu_k$$ 
the ideal generated by entries of $u$. Then for an $A\in I(u)$ take any $w\in B^{2n}$ such that $\lan w,\,u\ran=A$ and denote
$$
Z^A(u,\,v)=Z(u;\,\{v_{ij}^wA\}_{i<j}).
$$
By the previous lemma, this element does not depend on the choice of $w$. The projection of $Z^A(u,\,v)$ to the elementary group 
$$
\phi(Z^A(u,\,v))=T(u,\,vA^2,\,0).
$$
\end{df}

We start to prove properties of the elements $Z^A(u,\,v)$.

\begin{lm}
\label{conj}
Take $u$, $v\in B^{2n}$ such that $\lan u,\,v\ran=0$, $A\in I(u)$ and $g\in\St\!\Sp_{2n}(B)$. Then
$$
g\,Z^A(u,\,v)g\inv=Z^A(\phi(g)u,\,\phi(g)v).
$$
\end{lm}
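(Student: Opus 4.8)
The plan is to reduce the general conjugation formula to the already established equivariance property X1 from Lemma \ref{xlist}, using the explicit definition of $Z^A(u,\,v)$ as a $Z$-product of elements of the form $X(u,\,v_{ij}^wA,\,0)$. First I would fix $A\in I(u)$ and choose some $w\in B^{2n}$ with $\lan w,\,u\ran=A$, so that by definition $Z^A(u,\,v)=Z(u;\,\{v_{ij}^wA\}_{i<j})$, which is a product of the factors $X(u,\,v_{ij}^wA,\,0)$ together with one correction term $X(u,\,0,\,-\sum_{i<j,\,k<l}\lan v_{ij}^wA,\,v_{kl}^wA\ran)$ (with the usual ordering convention on the index pairs). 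Conjugating a product is the product of the conjugates, so it suffices to understand $g\,X(u,\,v_{ij}^wA,\,0)\,g\inv$ and $g\,X(u,\,0,\,c)\,g\inv$ for the scalar $c$ appearing in the correction term.

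Next I would apply X1, which gives $g\,X(u,\,v_{ij}^wA,\,0)\,g\inv=X(\phi(g)u,\,\phi(g)(v_{ij}^wA),\,0)$ and $g\,X(u,\,0,\,c)\,g\inv=X(\phi(g)u,\,0,\,c)$, where $\phi(g)\in\Sp_{2n}(B)$ is the image of $g$ under the natural projection. The key point is then to recognise the right-hand side as $Z^A(\phi(g)u,\,\phi(g)v)$. For this I would use that $\phi(g)$ is a symplectic automorphism, hence preserves the form: setting $w'=\phi(g)w$ one has $\lan w',\,\phi(g)u\ran=\lan w,\,u\ran=A$, so $w'$ is a legitimate choice of auxiliary vector for the vector $\phi(g)u$. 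It remains to check the compatibility of Suslin's vectors with $\phi(g)$, i.e.\ that $\phi(g)(v_{ij}^w)$ is, up to relabelling the index pairs, expressible through the $(v_{kl}')^{w'}$ built from $\phi(g)v$ and $w'$; and that the scalar $\sum\lan v_{ij}^wA,\,v_{kl}^wA\ran$ is unchanged because $\phi(g)$ preserves $\lan\,,\,\ran$. Since by Lemma \ref{suslin} one has $\sum_{i<j}v_{ij}^w=vA$ and each $v_{ij}^w$ has a symmetric pair of zero coordinates, the vectors $\{\phi(g)(v_{ij}^wA)\}$ satisfy exactly the hypotheses of Lemma \ref{decomposition} relative to $\phi(g)u$, $\phi(g)v$ and $w'$ (they sum to $\phi(g)(vA^2)=(\phi(g)v)A\cdot A$, wait—more precisely they sum to $\phi(g)(vA)\cdot A$ after factoring, and $\lan w',\,\phi(g)u\ran=A$), so Lemma \ref{decomposition} identifies $Z(\phi(g)u;\,\{\phi(g)(v_{ij}^wA)\}_{i<j})$ with $Z(\phi(g)u;\,\{(\phi(g)v)_{ij}^{w'}A\}_{i<j})=Z^A(\phi(g)u,\,\phi(g)v)$.

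The main obstacle I anticipate is the bookkeeping in the last step: strictly speaking $\phi(g)(v_{ij}^w)$ need not literally equal $(\phi(g)v)_{ij}^{w'}$ term by term, since the index $(i,j)$ refers to the standard basis and $\phi(g)$ moves basis vectors around; this is precisely why Lemma \ref{decomposition} (independence of the $Z$-product on the chosen orthogonal decomposition summing to $vA$) is needed as the bridge, rather than a naive term-by-term matching. One must verify that each $\phi(g)(v_{ij}^w)$ does have a pair of symmetric zero coordinates after applying a suitable symplectic change of basis—but this follows since $\phi(g)$ is symplectic and $v_{ij}^w$ has such a pair, so $\phi(g)(v_{ij}^w)$ has a symmetric pair of zero coordinates in the $\phi(g)$-transformed basis, and one may instead invoke Corollary \ref{decomposition}'s hypotheses directly in the standard basis by noting that being a scalar multiple of a vector with a symmetric pair of zeros is detected intrinsically. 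Once this is arranged, the rest is a routine combination of X1, Lemma \ref{suslin} and Lemma \ref{decomposition}, and the conjugation formula follows.
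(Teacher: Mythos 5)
The overall strategy you outline (conjugate the $Z$-product factor by factor via Lemma~\ref{xlist}(X1), note that the scalar correction term is invariant because $\phi(g)\in\Sp_{2n}(B)$ preserves the form, and then appeal to Lemma~\ref{decomposition} to re-identify the result as $Z^A(\phi(g)u,\,\phi(g)v)$) is exactly the paper's strategy, and you correctly identify where the work has to happen: $\phi(g)(v_{ij}^w)$ need not equal $(\phi(g)v)_{ij}^{\phi(g)w}$ term by term, so Lemma~\ref{decomposition} has to serve as the bridge. However, there is a genuine gap in how you propose to close that bridge. You assert that the vectors $\phi(g)(v_{ij}^w)$ ``satisfy exactly the hypotheses of Lemma~\ref{decomposition}'' and, anticipating the objection, claim this is because ``being a scalar multiple of a vector with a symmetric pair of zeros is detected intrinsically.'' This is not so. The hypothesis of Lemma~\ref{decomposition} is that each $x^k$ has a symmetric pair of zero coordinates \emph{in the fixed standard basis} $e_{-n},\ldots,e_n$; this is a coordinate property, not an intrinsic one, and it is not preserved by an arbitrary symplectic automorphism. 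Knowing that $v_{ij}^w$ has such a pair tells you nothing directly about $\phi(g)(v_{ij}^w)$.

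This is precisely what the paper's proof is devoted to. It first reduces to the case $g=X_{ij}(b)$ a single generator (which your proposal does not do, and which is essential to control what $\phi(g)$ does to coordinates). It then checks directly that for $n\geq4$ each $T_{ij}(b)v_{hk}$ still has a symmetric pair of zero coordinates, because its support is contained in $\{h,\,k,\,i,\,-j\}$, leaving at least one free symmetric pair. For $n=3$ this can fail: when $h=-i$ and $k\notin\{\pm i,\pm j\}$, the support of $T_{ij}(b)v_{-i,k}$ is $\{-i,\,k,\,-j\}$, which meets every symmetric pair of indices. The paper handles this by writing $T_{ij}(b)v_{-i,k}=(q+r)c$ as a sum of two mutually orthogonal vectors $q$, $r$, each with a symmetric pair of zeros and each orthogonal to $T_{ij}(b)u$, splitting the corresponding $X$-factor by Lemma~\ref{orth}, and only then invoking Lemma~\ref{decomposition}. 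Without this reduction to generators and without this case analysis (in particular, the $n=3$ split via Lemma~\ref{orth}), the application of Lemma~\ref{decomposition} in your final step is unjustified, so the proof as written does not go through.
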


\begin{rk}
Take $w$ such that $\lan w,\,u\ran=A$. Then, $\lan \phi(g)w,\,\phi(g)u\ran=\lan w,\,u\ran=A$ as well, so that $A\in I(\phi(g)u)$ and the right hand side is well-defined.
\end{rk}
\begin{proof}
We may assume that $g=X_{ij}(b)$. Obviously,
$$
g\,X(u,\,v_{hk}A,\,0)g\inv=X(\phi(g)u,\,\phi(g)v_{hk}A,\,0)
$$
and
$$
-\sum\lan v_{hk}A,\,v_{st}A\ran=-\sum\lan\phi(g)v_{hk}A,\,\phi(g)v_{st}A\ran,
$$
so that,
$$
g\,Z^A(u,\,v)g\inv=Z(\phi(g)u;\,\{\phi(g)v_{hk}A\}_{h<k}).
$$
For $n\geq4$ each $T_{ij}(b)v_{hk}$ still has at least one pair of zero coordinates what finishes the proof for this case by Lemma~\ref{decomposition}.

Now, consider the case of $n=3$. For $j=-i$ any $T_{ij}(b)v_{hk}$ still has a pair of zero coordinates. Now, assume $j\neq\pm i$. If $h$, $k\not\in\{j,-i\}$, then $\phi(g)v_{hk}=v_{hk}$. If $\{h,k\}=\{j,-i\}$ we also get that $T_{ij}(b)v_{hk}$ has a pair of zero coordinates. Thus, we may assume that $h\in\{j,-i\}$, say, $h=-i$, and $k\not\in\{\pm i,\pm j\}$.

Set 
$$u_{k,-i}=e_ku_{i}\,\sign{-i}-e_{-i}u_{-k}\,\sign k,$$ 
then $v_{k,-i}=u_{k,-i}(v_kw_{-i}-v_{-i}w_k)$.
One has
\begin{align*}
&T_{ij}(b)u=u+e_iu_jb-e_{-j}u_{-i}\,b\,\sign{ij},\\
&T_{ij}(b)u_{k,-i}=u_{k,-i}+e_{-j}u_{-k}\,b\,\sign{ijk}.
\end{align*}
Direct computation shows that
$$
\lan T_{ij}(b)u,\,u_{k,-i}-e_ku_j\,b\,\sign i\ran=0.
$$
Set 
$$q=u_{k,-i}-e_ku_j\,b\,\sign i\,\text{ and }\,r=e_ku_j\,b\,\sign i+e_{-j}u_{-k}\,b\,\sign{ijk}.$$ 
One has $T_{ij}(b)u_{k,-i}=q+r$, so that $r$ is also orthogonal to $T_{ij}(b)u$. Both $q$ and $r$ have a pair of zero coordinates and, moreover, they are orthogonal. Set $c=(v_kw_{-i}-v_{-i}w_k)$, then by Lemma~\ref{orth}
$$
X(T_{ij}(b)u,\,T_{ij}(b)v_{k,-i}A,\,0)=X(T_{ij}(b)u,\,qcA,\,0)X(T_{ij}(b)u,\,rcA,\,0).
$$
Finally, the claim follows from Lemma~\ref{decomposition}.
\end{proof}

The next lemma follows from Lemma~\ref{decomposition}.

\begin{lm}
\label{add}
Take $u$, $v$, $w\in B^{2n}$ such that $\lan u,\,v\ran=\lan u,\,w\ran=0$, $A\in I(u)$. Then
$$
Z^A(u,\,v)Z^A(u,\,w)=Z^A(u,\,v+w)X(u,\,0,\,\lan v,\,w\ran\cdot A^4).
$$
\end{lm}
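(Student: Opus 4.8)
The plan is to reduce the statement to $Z$'s of vectors with a symmetric pair of zero coordinates, where we may freely permute factors (the Corollary) and where the relation Lemma~\ref{xlist2}\,(X10) applies. Fix $w$ with $\lan w,\,u\ran=A$ and write $v_{ij}=v_{ij}^w$, $w_{ij}=w_{ij}^w$ in the notation of Suslin's Lemma (Lemma~\ref{suslin}); note $v_{ij}+w_{ij}=(v+w)_{ij}$ since the Suslin decomposition is linear in the ``$v$-slot''. By definition,
$$
Z^A(u,\,v)Z^A(u,\,w)=Z(u;\,\{v_{ij}A\}_{i<j})\cdot Z(u;\,\{w_{ij}A\}_{i<j}).
$$
First I would expand both $Z$'s via the definition: each is a product of factors $X(u,\,v_{ij}A,\,0)$ (resp.\ $X(u,\,w_{ij}A,\,0)$) times a central correction $X(u,\,0,\,*)$. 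So the left-hand side is a product over the $i<j$ of the $X(u,\,v_{ij}A,\,0)$, then over the $i<j$ of the $X(u,\,w_{ij}A,\,0)$, times a central factor $X(u,\,0,\,-A^2(\sum_{i<j<k<l}\lan v_{ij},v_{kl}\ran+\sum\lan w_{ij},w_{kl}\ran))$.

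Next I would interleave: since every $v_{ij}A$ and $w_{ij}A$ is a scalar multiple of a vector with a symmetric pair of zero coordinates, and all of them are orthogonal to $u$, I may reorder the whole product arbitrarily, paying the price of central correction terms. Concretely, using Lemma~\ref{xlist2}\,(X10) repeatedly to merge $X(u,\,v_{ij}A,\,0)X(u,\,w_{ij}A,\,0)=X(u,\,(v_{ij}+w_{ij})A,\,0)\cdot X(u,\,0,\,-A^2\lan v_{ij},w_{ij}\ran)$ for each fixed pair $i<j$, and collecting the scalar terms, I obtain
$$
Z^A(u,\,v)Z^A(u,\,w)=Z(u;\,\{(v_{ij}+w_{ij})A\}_{i<j})\cdot X(u,\,0,\,c)
$$
for an explicit $c\in B$. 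Now $Z(u;\,\{(v+w)_{ij}A\}_{i<j})=Z^A(u,\,v+w)$ by definition, so it only remains to identify the scalar $c$ with $\lan v,\,w\ran A^4$. This is the computational heart: gathering the commutator corrections from reordering together with the $-A^2\lan v_{ij},w_{ij}\ran$ terms and the original central factors, and then applying the bilinear identity from Suslin's Lemma, $\sum_{i<j}v_{ij}=vA$ and $\sum_{i<j}w_{ij}=wA$, together with bilinearity of $\lan\,,\,\ran$, to sum the cross-terms $\sum_{(i<j),(k<l)}\lan v_{ij},w_{kl}\ran=\lan vA,\,wA\ran=\lan v,w\ran A^2$. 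Multiplying by the extra $A^2$ coming from the definition of the $v_{ij}$'s gives $\lan v,w\ran A^4$.

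The main obstacle is bookkeeping rather than any conceptual difficulty: one must track precisely which reorderings are needed and verify that all the stray commutator terms from Lemma~\ref{xlist2}\,(X10) are central (they are, being of the form $X(u,\,0,\,*)$) and combine, via the bilinear Suslin identities, exactly into $\lan v,w\ran A^4$ with the correct sign. I would organize this by first proving the clean merging identity for a single index pair $i<j$, then observing all resulting central factors commute with everything, and only at the end summing the scalars using $\sum v_{ij}=vA$ and $\sum w_{ij}=wA$; an alternative, if the sign chase gets unwieldy, is to apply $\phi$ to both sides (where $\phi(Z^A(u,v))=T(u,vA^2,0)$ by the definition preceding Lemma~\ref{conj}) to pin down the answer modulo the center, and then fix the scalar by specializing to a sufficiently generic configuration. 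I expect the first approach to go through directly.
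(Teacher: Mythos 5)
Your plan is correct and would produce a valid proof, but it takes a more manual route than the paper does. The paper's proof is a one-liner: it cites Lemma~\ref{decomposition} applied to the combined family $\{v_{ij}\}_{i<j}\cup\{w_{ij}\}_{i<j}$, whose sum is $vA+wA=(v+w)A$; that lemma immediately yields $Z(u;\,\{v_{ij}A\}\cup\{w_{ij}A\})=Z^A(u,\,v+w)$, so the whole computation reduces to reading off the cross-term in the central factor of the definition of $Z$, namely $\sum\lan v_{ij}A,\,w_{kl}A\ran=\lan vA^2,\,wA^2\ran=\lan v,w\ran A^4$. You instead reprove the relevant instance of Lemma~\ref{decomposition} by hand: interleave the two products via the commutator lemma preceding the Corollary, merge $X(u,v_{ij}A,0)X(u,w_{ij}A,0)$ for each $i<j$ via Lemma~\ref{xlist2}\,(X10), and use linearity $v_{ij}+w_{ij}=(v+w)_{ij}$. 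That works, but you inherit all the sign bookkeeping that Lemma~\ref{decomposition} was designed to package away. Two small remarks on your write-up: the X10 correction at the merging step has sign $+\lan v_{ij}A,\,w_{ij}A\ran$, not $-A^2\lan v_{ij},w_{ij}\ran$; and in any case this term is identically zero, since for fixed $i<j$ the vectors $v_{ij}$ and $w_{ij}$ are scalar multiples of the same vector $e_iu_{-j}\sign j-e_ju_{-i}\sign i$, so their symplectic pairing vanishes. Noting this would simplify your bookkeeping: the only nonzero central contributions are the two original correction factors plus the commutators from reordering, and these combine (after cancellation against the central factor of $Z^A(u,v+w)$) to $\sum_{p,q}\lan v_pA,\,w_qA\ran=\lan v,w\ran A^4$ exactly as you predict.
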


\begin{cl}
For $u$, $v$, $\lan u,\,v\ran=0$, $A\in I(u)$ one has
\begin{align*}
&Z^A(u,\,0)=1,\\
&Z^A(u,\,v)\inv=Z^A(u,\,-v).
\end{align*}
\end{cl}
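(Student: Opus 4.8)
The plan is to derive both identities directly from Lemma~\ref{add}, so essentially no new computation is needed. First I would record the trivial observation that $X(u,\,0,\,0)=1$: this is immediate from Lemma~\ref{xlist}\,(X3) applied with both scalar arguments equal to $0$, which gives $X(u,\,0,\,0)^2=X(u,\,0,\,0)$.

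To prove $Z^A(u,\,0)=1$, I would apply Lemma~\ref{add} with $v=w=0$. Since $\lan 0,\,0\ran=0$, the lemma yields $Z^A(u,\,0)\,Z^A(u,\,0)=Z^A(u,\,0)\,X(u,\,0,\,0)=Z^A(u,\,0)$, and cancelling one factor gives the claim.

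For the inverse formula I would invoke Lemma~\ref{add} once more, this time with $w$ replaced by $-v$. Because the standard symplectic form is alternating, $\lan v,\,-v\ran=-\lan v,\,v\ran=0$, so the lemma gives $Z^A(u,\,v)\,Z^A(u,\,-v)=Z^A(u,\,0)\,X(u,\,0,\,0)$, which equals $1$ by the first part together with $X(u,\,0,\,0)=1$. Hence $Z^A(u,\,v)\inv=Z^A(u,\,-v)$.

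Since everything reduces to two applications of Lemma~\ref{add} and the identity $X(u,\,0,\,0)=1$, there is no genuine obstacle; the only point to be mildly careful about is that the scalar correction term $\lan v,\,w\ran\cdot A^4$ appearing in Lemma~\ref{add} vanishes in both instances, which it does precisely because the form is alternating.
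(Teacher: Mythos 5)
Your proof is correct and follows the route the paper intends: the corollary is stated immediately after Lemma~\ref{add} precisely because both identities fall out of that lemma once one notes the alternating-form vanishing $\lan v,\,-v\ran=0$ and the trivial fact $X(u,\,0,\,0)=1$. (The first identity $Z^A(u,\,0)=1$ can also be read off directly from the definition of $Z^A$, since all the $v_{ij}^w$ vanish when $v=0$, but your cancellation argument is equally valid.)
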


\begin{lm}
\label{symm}
Take $u$, $v\in B^{2n}$ such that $\lan u,\,v\ran=0$, $A\in I(u)\cap I(v)$, $b\in B$. Assume that there exist $p$, $q\in B^{2n}$ such that
$$\lan u,\,p\ran=\lan u,\,q\ran=\lan v,\,p\ran=\lan v,\,q\ran=0,$$ 
and $\lan p,\,q\ran=A$. Then one has
$$
Z^A(u,\,vb\cdot A^3)=Z^A(v,\,ub\cdot A^3).
$$
\end{lm}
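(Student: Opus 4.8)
The plan is to reduce the symmetry statement $Z^A(u,\,vb\cdot A^3)=Z^A(v,\,ub\cdot A^3)$ to the already-established relation X9 from Lemma~\ref{xlist2}, which says $X(u,\,vc,\,0)=X(v,\,uc,\,0)$ once one of the two vectors has a symmetric pair of zero coordinates. The hypotheses give us auxiliary vectors $p,\,q$ orthogonal to both $u$ and $v$ with $\lan p,\,q\ran=A$; these will play the role of the ``witness'' $w$ in the definition of $Z^A$, but we have the freedom to choose the witness for each of the two sides. For the left-hand side $Z^A(u,\,vb\cdot A^3)$ I would pick $w$ with $\lan w,\,u\ran=A$ built out of $p$ and $q$ (say using $p$, since $\lan p,\,u\ran$ can be arranged, or rather combine $p,\,q$ appropriately so that $\lan w,\,u\ran=A$ while $\lan w,\,v\ran$ stays controlled); symmetrically for the right-hand side with the roles of $u$ and $v$ swapped. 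Because $p$ and $q$ are orthogonal to both $u$ and $v$, the Suslin vectors $(vb\cdot A^3)^w_{ij}$ and $(ub\cdot A^3)^{w'}_{ij}$ that appear in the two $Z$-expressions will be literally the same collection of vectors up to scalar, each with a symmetric pair of zeros, and then the Corollary to Lemma~\ref{x=z} (invariance of $Z(u;\,\{v_k\})$ under permutations) together with X9 will identify the two products term by term.

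More concretely, first I would unwind $Z^A(u,\,vb\cdot A^3)=Z(u;\,\{(vb\cdot A^3)^w_{ij}\}_{i<j})$ using a witness $w$ with $\lan w,\,u\ran=A$. The key point is to show that each Suslin summand $(vb\cdot A^3)^w_{ij}=(e_iu_{-j}\,\sign j-e_ju_{-i}\,\sign i)(v_ibA^3w_j-v_jbA^3w_i)$ can be rewritten, using Lemma~\ref{xlist2}\,(X9), in a form symmetric in the pair $(u,\,v)$. This is where the extra hypotheses $\lan v,\,p\ran=\lan v,\,q\ran=0$ and $\lan p,\,q\ran=A$ are needed: they let me choose $w$ to be (a multiple of) a vector lying in the span of $p$ and $q$, so that $w$ is simultaneously ``good'' for the Suslin decomposition relative to $u$ and relative to $v$. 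The main obstacle will be bookkeeping the various powers of $A$: the definition of $Z^A$ already carries $A$-weights (recall $\phi(Z^A(u,\,v))=T(u,\,vA^2,\,0)$), and the argument $vb\cdot A^3$ is itself $A$-divisible, so I must be careful that the correction terms $X(u,\,0,\,\ast)$ produced when reordering factors via Lemma~\ref{xlist2}\,(X10) and the lemma just before (on commutators $[X(u,\,v,\,0),\,X(u,\,w,\,0)]$) match on both sides.

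The cleanest route is probably: (1) expand the left-hand side as a product $\prod_{i<j}X(u,\,(vb\cdot A^3)^w_{ij},\,0)$ times a central correction, choosing $w=p+q$ (so $\lan w,\,u\ran=\lan p,\,u\ran+\lan q,\,u\ran$; if this is not exactly $A$ replace $w$ by a scalar combination $p\mu+q\nu$ arranged so that $\lan w,\,u\ran=A$ — here one uses $A\in I(u)$ and the freedom in Lemma~\ref{decomposition} that the witness is arbitrary); (2) apply Lemma~\ref{xlist2}\,(X9) to each factor $X(u,\,(\dots)_{ij},\,0)$ to flip it into a factor of the form $X(v,\,(\dots),\,0)$, legitimately since the relevant vector $(e_iu_{-j}\,\sign j - e_ju_{-i}\,\sign i)$ or its partner has a symmetric pair of zero coordinates; (3) recognise the resulting product, after reindexing and using the permutation-invariance Corollary, as exactly the Suslin expansion of $Z^A(v,\,ub\cdot A^3)$ computed with the witness $w'=p+q$ again (now viewed relative to $v$), using $\lan w',\,v\ran=A$; (4) check the central $X(u,\,0,\,\ast)$ and $X(v,\,0,\,\ast)$ correction terms agree, which they do because $X(u,\,0,\,c)=X(v,\,0,\,c)$ whenever both make sense here — more precisely one tracks that the $-\sum_{i<j}\lan\cdot,\cdot\rangle$ corrections coincide since the pairings of the Suslin vectors are symmetric under the swap. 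The delicate step is (1)–(2): ensuring a single witness works for both sides and that X9's hypothesis (one vector having a symmetric pair of zeros) is genuinely met for every summand; everything after that is the sort of reordering already done in Lemmas~\ref{x=z} and \ref{decomposition}.
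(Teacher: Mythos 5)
Your plan is based on a misreading of the hypotheses, and the key step fails. The lemma assumes $\lan u,\,p\ran=\lan u,\,q\ran=\lan v,\,p\ran=\lan v,\,q\ran=0$, i.e.\ $p$ and $q$ are \emph{orthogonal} to both $u$ and $v$. Consequently \emph{no} linear combination $p\mu+q\nu$ can serve as a Suslin witness $w$ with $\lan w,\,u\ran=A$: any such combination pairs to zero with $u$ (and with $v$). Your step (1), ``choose $w$ in the span of $p,q$ so that $\lan w,\,u\ran=A$,'' therefore cannot be carried out, and the entire term-by-term matching between the Suslin expansions of $Z^A(u,\,vbA^3)$ and $Z^A(v,\,ubA^3)$ collapses. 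Even setting aside this issue, flipping a single factor $X(u,\,(vbA^3)^w_{ij}A,\,0)$ via X9 produces $X((vbA^3)^w_{ij},\,uA,\,0)$, which is not a factor in the Suslin expansion of $Z^A(v,\,ubA^3)$ relative to any witness $w'$ — the two families $(vbA^3)^w_{ij}$ and $(ubA^3)^{w'}_{ij}$ are built from the entries of $u$ and of $v$ respectively and are not related by a simple reindexing. Your approach also gives no account of why the statement carries exactly the weight $A^3$.

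The paper's proof uses $p$ and $q$ in a completely different way. One sets $g=Z^A(u,\,pb)$ and $h=Z^A(v,\,q)$ — these are well-defined precisely because $\lan u,\,p\ran=0$ and $\lan v,\,q\ran=0$ — and computes the commutator $[g,\,h]$ in two ways. Since $\phi(g)=T(u,\,pbA^2,\,0)$ fixes $v$ (as $\lan p,\,v\ran=0$) and sends $q\mapsto q+u\,bA^2\lan p,\,q\ran=q+ubA^3$, Lemma~\ref{conj} gives $\,^{g}h=Z^A(v,\,q+ubA^3)$, and Lemma~\ref{add} then yields $\,^{g}h\cdot h\inv=Z^A(v,\,ubA^3)$ (the $X(v,\,0,\,\ast)$ correction vanishes because $\lan q+ubA^3,\,-q\ran=0$). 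The symmetric computation gives $g\cdot\,^{h}g\inv=Z^A(u,\,vbA^3)$, and equating the two expressions for $[g,\,h]$ finishes the proof. Here the role of $\lan p,\,q\ran=A$ is to produce the factor $A$, which together with the $A^2$ in $\phi(Z^A(\cdot))$ explains the $A^3$ in the statement — this is the structural feature your proposal overlooked.
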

\begin{proof}
Denote $g=Z^A(u,\,pb)$ and $h=Z^A(v,\,q)$. Compute the commutator in two ways
$$\!\,^gh\cdot h\inv=g\cdot\,^hg\inv.$$
Recall that $\phi\big(Z^A(u,\,pb)\big)=T(u,\,pbA^2,\,0)$ and use Lemmas~\ref{conj} and \ref{add} to get
$$
\!\,^gh\cdot h\inv=Z^A(v,\,q+ubA^3)Z^A(v,\,-q)=Z^A(v,\,ubA^3).
$$
Similarly, $g\cdot\,^hg\inv=Z^A(u,\,vbA^3)$.
\end{proof}

\begin{lm}
Consider $w$, $u\in B^{2n}$, $b\in B$, denote $A=\lan w,\,u\ran$. Assume that there exist $z$, $v\in B^{2n}$ such that $\lan z,\,v\ran=A$ and
$$
\lan u,\,v\ran=\lan u,\,z\ran=\lan w,\,v\ran=\lan w,\,z\ran=0.
$$
Then one has
$$
Z^A(u,\,ubA^3)=X(u,\,0,\,2bA^5).
$$
\end{lm}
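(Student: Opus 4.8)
The plan is to realise $Z^A(u,ubA^3)$ as the commutator $[Z^A(u,z),Z^A(u,vb)]$, and then to compute this commutator a second time through Lemma~\ref{add}, obtaining $X(u,0,2bA^5)$.

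First I would record a general commutator identity. Whenever $\lan u,x\ran=0$, the element $\phi\big(Z^A(u,x)\big)=T(u,xA^2,0)$ fixes $u$, so Lemma~\ref{xlist}\,(X1) applied to $g=Z^A(u,x)$ shows that $Z^A(u,x)$ commutes with every $X(u,0,c)$; the same holds for $Z^A(u,y)$. On the other hand Lemma~\ref{add} gives $Z^A(u,x)Z^A(u,y)=Z^A(u,x+y)\,X(u,0,\lan x,y\ran A^4)$ and, by symmetry, $Z^A(u,y)Z^A(u,x)=Z^A(u,x+y)\,X(u,0,-\lan x,y\ran A^4)$. Hence, for all $x$, $y\in B^{2n}$ with $\lan u,x\ran=\lan u,y\ran=0$, the two products differ by right multiplication by $X(u,0,2\lan x,y\ran A^4)$, which centralises both factors; therefore
$$[Z^A(u,x),\,Z^A(u,y)]=X(u,0,2\lan x,y\ran A^4).$$

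Next I would compute $[Z^A(u,z),Z^A(u,vb)]$ directly by conjugation. All elements involved are defined, since $A=\lan w,u\ran\in I(u)$ and $z$, $vb$, $ubA^3$ are orthogonal to $u$. By Lemma~\ref{conj}, $\,^{Z^A(u,z)}Z^A(u,vb)=Z^A\big(u,\,T(u,zA^2,0)(vb)\big)$, and a direct computation using $\lan u,v\ran=0$ and $\lan z,v\ran=A$ gives $T(u,zA^2,0)(vb)=vb+ubA^3$. Since $\lan ubA^3,vb\ran=b^2A^3\lan u,v\ran=0$, Lemma~\ref{add} yields $Z^A(u,vb+ubA^3)=Z^A(u,ubA^3)Z^A(u,vb)$, so that
$$[Z^A(u,z),\,Z^A(u,vb)]=\,^{Z^A(u,z)}Z^A(u,vb)\cdot Z^A(u,vb)\inv=Z^A(u,ubA^3).$$

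Finally, applying the identity of the second paragraph with $x=z$ and $y=vb$ gives $[Z^A(u,z),Z^A(u,vb)]=X(u,0,2\lan z,vb\ran A^4)=X(u,0,2bA^5)$, using $\lan z,v\ran=A$. Comparing the two evaluations yields $Z^A(u,ubA^3)=X(u,0,2bA^5)$, as required. I do not expect a genuine obstacle: the crux is the observation that the hyperbolic pair $z,v$ in $u^\perp$ lets one produce $Z^A(u,ubA^3)$ as this commutator, and the only thing requiring care is the bookkeeping with the powers of $A$ and with the correction terms $X(u,0,c)$ coming from Lemma~\ref{add} (notably that $\lan ubA^3,vb\ran=0$, which holds because $\lan u,v\ran=0$). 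Incidentally, only $A\in I(u)$ and the existence of vectors $v$, $z$ with $\lan u,v\ran=\lan u,z\ran=0$ and $\lan z,v\ran=A$ are used; the stated orthogonality of $v$ and $z$ to $w$ is not needed for this argument.
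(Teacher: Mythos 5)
Your proof is correct and takes essentially the same approach as the paper's: compute a commutator once by conjugation via Lemma~\ref{conj} and once by additivity via Lemma~\ref{add} (the paper sets $g=Z^A(u,zb)$, $h=Z^A(u,v)$, placing $b$ on $z$ rather than $v$, and expands $ghg^{-1}h^{-1}$ directly instead of isolating the general identity $[Z^A(u,x),Z^A(u,y)]=X(u,0,2\lan x,y\ran A^4)$, but these differences are cosmetic). Your closing observation that the hypotheses $\lan w,v\ran=\lan w,z\ran=0$ are not actually used is also accurate, and applies equally to the paper's version of the argument.
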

\begin{proof}
Set $g=Z^A(u,\,zb)$, $h=Z^A(u,\,v)$ and compute $[g,\,h]$ in two ways. On the one hand,
$$
\!\,^gh\cdot h\inv=Z^A(u,\,v+ubA^3)Z^A(u,\,-v)=Z(u,\,ubA^3)
$$
by Lemma~\ref{conj}. On the other hand, 
\begin{multline*}
gh\cdot g\inv h\inv=\\=Z^A(u,\,zb+v)X(u,\,0,\,bA^5)Z^A(u,\,-zb-v)X(u,\,0,\,bA^5)=\\=X(u,\,0,\,2bA^5)
\end{multline*}
by Lemma~\ref{add}.
\end{proof}

\begin{lm}
\label{5+6}
Consider $u$, $v\in B^{2n}$, $A\in I(u)\cap I(v)$, $b$, $c\in B$. Assume that there exist $w$, $z$, $x$, $y\in B^{2n}$, such that
$$
\lan w,\,u\ran=\lan z,\,v\ran=\lan x,\,y\ran=A
$$
and pairs $(w,\,u)$, $(z,\,v)$ and $(x,\,y)$ are mutually orthogonal. Then one has
$$
X(u+vb,\,0,\,cA^{11})=X(u,\,0,\,cA^{11})X(v,\,0,\,b^2cA^{11})Z^A(u,\,vbcA^9).
$$
\end{lm}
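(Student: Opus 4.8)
The plan is to read the claimed identity as the $Z^A$-incarnation of the elementary relation X8 (equivalently K6), with enough powers of $A$ inserted so that the construction survives. As a first sanity check I would apply $\phi$: from $\phi\big(Z^A(u,w)\big)=T(u,wA^2,0)$ and the purely elementary identity $T(u+vb,0,e)=T(u,0,e)T(v,0,b^2e)T(u,vbe,0)$ (valid since $\lan u,v\ran=0$, which is part of the mutual orthogonality hypothesis), both sides have the same image in $\Ep_{2n}(B)$; so the content is to promote this equality to $\St\!\Sp_{2n}(B)$.

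I would first treat the base case in which $v$ (equivalently $u$) has a symmetric pair of zero coordinates. There X8 applies verbatim and gives
$$X(u+vb,0,cA^{11})=X(u,0,cA^{11})\,X(v,0,b^2cA^{11})\,X(u,vbcA^{11},0),$$
while Lemma~\ref{x=z}, applied to the vector $vbcA^{10}$ (which still has a symmetric pair of zeros) with any auxiliary $w$ satisfying $\lan w,u\ran=A$, rewrites the last factor as $Z^A(u,vbcA^9)$. This settles the lemma under the extra assumption.

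For arbitrary $u$, $v$ I would clear one factor of $A$ to reach the regime where Suslin's lemma manufactures zero-coordinate vectors: by X2, $X(u+vb,0,cA^{11})=X\big((u+vb)A,0,cA^9\big)$, and $(u+vb)A=uA+(vA)b$, while the Symplectic Suslin's Lemma~\ref{suslin} applied to $\lan v,v\ran=0$ with $\lan z,v\ran=A$ (and to $\lan u,u\ran=0$ with $\lan w,u\ran=A$) writes $vA=\sum_{i<j}v_{ij}^z$ and $uA=\sum_{i<j}u_{ij}^w$ as sums of vectors each of which has a symmetric pair of zero coordinates (here $n\geq3$ is used). I would then peel these zero-coordinate summands off $X\big((u+vb)A,0,cA^9\big)$ one at a time using the base case together with Lemma~\ref{orth}, absorb the corrections coming from summands that are not mutually orthogonal by X10 and the permutation invariance of $Z(u;\cdots)$, and recombine the emerging $Z^A$-factors by Lemma~\ref{add} and the linearity of the Suslin components $x\mapsto x_{ij}^w$ that underlies Lemma~\ref{decomposition} — this is what makes the recombined first argument collapse to the single vector $u$ rather than stay a mixture. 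The leftover scalars ending up in factors $X(u,0,\ast)$ and $X(v,0,\ast)$ are then normalized with Lemma~\ref{symm} (taking $(p,q)=(x,y)$, which is exactly the role of the third auxiliary hyperbolic pair) and the preceding lemma expressing $Z^A(u,ubA^3)$ through $X(u,0,\ast)$, bringing the product to the stated shape.

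The hard part will be the combinatorics of this reassembly: controlling the cross-commutators produced when the Suslin summands — orthogonal to $u$ (resp.\ $v$) but not to one another — are peeled off, and verifying that after all recombinations the scalars collected in the trailing $X(u,0,\ast)$ and $X(v,0,\ast)$ factors are precisely the ones in the statement, so that they assemble into $X(u,0,cA^{11})X(v,0,b^2cA^{11})$ with nothing extra. This step is also where the specific powers $A^{11}$ and $A^9$ get pinned down. Structurally the computation parallels the bookkeeping in the linear Tulenbaev map and in~\cite{n8}, so no genuinely new idea beyond careful organization should be required.
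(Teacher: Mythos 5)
Your base case is right: if $v$ (or $u$) has a symmetric pair of zero coordinates, X8 gives $X(u+vb,0,cA^{11})=X(u,0,cA^{11})X(v,0,b^2cA^{11})X(u,vbcA^{11},0)$, and applying Lemma~\ref{x=z} to $v'=vbcA^{10}$ (which still has a symmetric zero pair) does rewrite the trailing factor as $Z(u;\{v_{ij}bcA^{10}\}_{i<j})=Z^A(u,vbcA^9)$. But this special case is not where the content of the lemma lies, and for general $u$, $v$ you do not actually give a proof. You clear an $A$, decompose $uA$ and $vA$ by Suslin's Lemma, and then announce a peeling-and-recombination plan whose execution --- absorbing the cross-commutators, recombining the Suslin summands into a single $Z^A(u,\cdot)$ factor, and ``pinning down'' that the scalars come out to exactly $cA^{11}$, $b^2cA^{11}$ and $A^9$ --- you explicitly defer. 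That deferred step \emph{is} the lemma; as written there is a genuine gap.

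It is also worth noting that the role you assign to the auxiliary hyperbolic pair $(x,y)$ is different from, and weaker than, what the paper does with it. You bring in $(x,y)$ only at the end to normalize stray scalars via Lemma~\ref{symm}. The paper uses $(x,y)$ from the start as the very source of the scalar $cA^{11}$: applying Lemma~\ref{add} to $Z^A(u+vb,xA^3)\,Z^A(u+vb,ycA^3)$ produces $X(u+vb,0,\lan xA^3,ycA^3\ran A^4)=X(u+vb,0,cA^{11})$ as the defect term, so no Suslin decomposition of $u+vb$ into zero-coordinate summands is ever needed. After that, one pushes $u+vb$ to second position by Lemma~\ref{symm} with $p=z-wb$, $q=v$ (and again with $p=z,q=v$ and $p=w,q=u$), decomposes everything by Lemma~\ref{add}, and cancels; the one surviving cross-commutator $[Z^A(u,xA^3),Z^A(v,ybcA^3)]=Z^A(u,vbcA^9)$ is exactly the last factor in the claim. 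That computation is short, avoids the combinatorics you flag as the hard part, and makes the exponents $11$ and $9$ drop out with no bookkeeping. If you want to salvage your route, you will need to actually carry out the peeling argument and verify the scalar accounting; but it is much cleaner to adopt the $Z^A$-level computation, where the permutation-invariance and linearity facts you cite (Lemma~\ref{decomposition}, X10) are already packaged inside Lemmas~\ref{add} and~\ref{symm}.
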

\begin{proof}
First, use Lemma~\ref{add}
\begin{multline*}
Z^A(u+vb,\,xA^3)Z^A(u+vb,\,ycA^3)=\\=Z^A(u+vb,\,(x+yc)A^3)X(u+vb,\,0,\,cA^{11}).
\end{multline*}
We want to show that
$$
Z^A(u+vb,\,xA^3)=Z^A(x,\,(u+vb)A^3).
$$
With this end, use Lemma~\ref{symm} with $p=z-wb$ and $q=v$. Next, decompose
$$
Z^A(x,\,(u+vb)A^3)=Z^A(x,\,uA^3)Z^A(x,\,vbA^3)
$$
by Lemma~\ref{add} and use Lemma~\ref{symm} with $p=z$ and $q=v$ to show that
$$
Z^A(x,\,uA^3)=Z^A(u,\,xA^3)
$$
and with $p=w$, $q=u$ to get that
$$
Z^A(x,\,vbA^3)=Z^A(v,\,xbA^3).
$$
Similarly, one shows that
$$
Z^A(u+vb,\,ycA^3)=Z^A(u,\,ycA^3)Z^A(v,\,ybcA^3)
$$
and
$$
Z^A(u+vb,\,-(x+yc)A^3)=Z^A(u,\,-(x+yc)A^3)Z^A(v,\,-(x+yc)bA^3).
$$
Decompose also 
\begin{align*}
&Z^A(u,\,-(x+yc)A^3)=Z^A(u,\,-ycA^3)Z^A(u,\,-xA^3)X(u,\,0,\,cA^{11}),\\
&Z^A(v,\,-(x+yc)bA^3)=Z^A(v,\,-ybcA^3)Z^A(v,\,-xbA^3)X(v,\,0,\,b^2cA^{11})
\end{align*}
by Lemma~\ref{add}. Now, we can express $X(u+vb,\,0,\,cA^{11})$ in terms of these ten elements. Most of the factors will cancel, but we will need to interchange positions of $Z^A(u,\,xA^3)$ and $Z^A(v,\,ybcA^3)$, thus we obtain their commutator as an extra factor
$$
[Z^A(u,\,xA^3),\,Z^A(v,\,ybcA^3)]=Z^A(u,\,vbcA^9).
$$
\end{proof}

Now, we focus on the case $A=a^N$.

\begin{df}
Take $b\in I$ and $u$, $v\in B^{2n}$, such that $a^N\in I(u)$ for some $N\in\mathbb N$, $\lan u,\,v\ran=0$.
Then set
$$
Z(u,\,v,\,b)=Z^{(a^N)}\Big(u,\,v\frac b{a^{2N}}\Big).
$$
This element does not depend on the choice of $N$. Take $w\in B^{2n}$ such that $\lan w,\,u\ran=a^N$, then $\lan wa^M,\,u\ran=a^{N+M}$ and by the very definition
$$
\Big(v\frac{b}{a^{2(N+M)}}\Big)_{ij}^{(wa^M)}=v_{ij}^{(wa^M)}\cdot\frac{b}{a^{2(N+M)}}=v_{ij}^w\cdot a^M\cdot\frac{b}{a^{2(N+M)}},
$$
so that
$$
Z^{(a^{N+M})}\Big(u,\,v\frac{b}{a^{2(N+M)}}\Big)=Z\Big(u;\,\Big\{\big(v_{ij}^w\frac{b}{a^{2N+M}}\big)a^{N+M}\Big\}_{i<j}\Big)=Z^{(a^N)}\Big(u,\,v\frac b{a^{2N}}\Big).
$$
Observe that $\phi\big(Z(u,\,v,\,b)\big)=T(u,\,vb,\,0)$.
\end{df}

Below we list the properties of our new elements $Z(u,\,v,\,b)$. They follow directly from the definition and Lemmas~\ref{conj}\,--\,\ref{5+6}.

\begin{lm}
\label{zlist}
Take $u$, $v$, $v'\in B^{2n}$ such that $a^N\in I(u)$ for some $N\in\mathbb N$, $\lan u,\,v\ran=\lan u,\,v'\ran=0$, $b$, $c\in I$, $r\in B$, $g\in\St\!\Sp_{2n}(B)$. Then one has
\setcounter{equation}{-1}
\renewcommand{\theequation}{Z\arabic{equation}}
\begin{align}
&\phi\big(Z(u,\,v,\,b)\big)=T(u,\,vb,\,0),\\
&Z(u,\,vr,\,b)=Z(u,\,v,\,rb),\\
&Z(u,\,v,\,b)Z(u,\,v',\,b)=Z(u,\,v+v',\,b)X(u,\,0,\,b^2\lan v,\,v'\ran),\\
&Z(u,\,v,\,b)Z(u,\,v,\,c)=Z(u,\,v,\,b+c),\\
&g\,Z(u,\,v,\,b)g\inv=Z(\phi(g)u,\,\phi(g)v,\,b).
\end{align}
Assume that there also exist $w$, $z\in B^{2n}$ such that holds
$\lan w,\,u\ran=\lan z,\,v\ran=a^N$ and pairs $(w,\,u)$, $(z,\,v)$ are orthogonal. Then one also has
\begin{align}
&Z(u,\,u,\,b)=X(u,\,0,\,2b).
\end{align}
If in addition there exist $x$, $y\in B^{2n}$ such that $\lan x,\,y\ran=a^N$ and the pair $(x,\,y)$ is orthogonal to pairs $(w,\,u)$ and $(z,\,v)$, then
\begin{align}
&Z(u,\,v,\,b)=Z(v,\,u,\,b),\\
&X(u+vr,\,0,\,b)=X(u,\,0,\,b)X(v,\,0,\,br^2)Z(u,\,v,\,br).
\end{align}
\end{lm}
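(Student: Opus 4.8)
The plan is to read off each of Z0--Z7 from the corresponding statement among Lemmas~\ref{conj}--\ref{5+6} by specialising the auxiliary scalar $A$ there to $A=a^N$, where $N$ is fixed once and for all large enough that every expression occurring is defined. In each case the mechanism is the same: the source lemma produces surplus powers of $a$, which I absorb into the scalar using that $\lambda_a$ restricts to an isomorphism of $I$, so that $\tfrac x{a^{kN}}$ is a well-defined element of $I$ for every $x\in I$ and every $k$; the $N$-independence established in the definition of $Z(u,v,b)$ guarantees the result does not depend on this choice.

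First I would dispatch Z0--Z4, which require no auxiliary vectors. Z0 is already recorded in the definition of $Z(u,v,b)$. Z1 is a tautology, since $(vr)\tfrac b{a^{2N}}=v\tfrac{rb}{a^{2N}}$ as elements of $B^{2n}$. For Z2 I apply Lemma~\ref{add} to $v\tfrac b{a^{2N}}$ and $v'\tfrac b{a^{2N}}$ with $A=a^N$; the correction term $X\big(u,0,\lan v\tfrac b{a^{2N}},v'\tfrac b{a^{2N}}\ran a^{4N}\big)$ equals $X(u,0,b^2\lan v,v'\ran)$. Z3 is the analogous computation applied to $v\tfrac b{a^{2N}}$ and $v\tfrac c{a^{2N}}$ (the same vector, the scalars $b$ and $c$), where the correction term vanishes because $\lan v,v\ran=0$, leaving $Z(u,v,b)Z(u,v,c)=Z(u,v,b+c)$. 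Z4 is immediate from Lemma~\ref{conj} once one notes that $\phi(g)$ is $B$-linear and, by the Remark following Lemma~\ref{conj}, that $a^N\in I(\phi(g)u)$.

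Next I would treat Z5, Z6, Z7, which come from the three lemmas of this section that carry extra hypotheses on auxiliary vectors; those hypotheses are precisely the ones imposed in Z5--Z7. For Z5 I invoke the (unlabelled) lemma immediately preceding Lemma~\ref{5+6}, whose hypotheses $\lan w,u\ran=\lan z,v\ran=A$ and mutual orthogonality of $(w,u)$ and $(z,v)$ are exactly those of Z5; with $A=a^N$ and its scalar replaced by $\tfrac b{a^{5N}}$, the identity $Z^A(u,ubA^3)=X(u,0,2bA^5)$ becomes $Z(u,u,b)=X(u,0,2b)$. For Z6 I apply Lemma~\ref{symm} with $(p,q)=(x,y)$, which is legitimate because $(x,y)$ is orthogonal to $u$ and to $v$, $\lan x,y\ran=a^N$, and $a^N\in I(u)\cap I(v)$ thanks to the pairs $(w,u)$ and $(z,v)$; taking its scalar to be $\tfrac b{a^{5N}}$ turns $Z^A(u,vbA^3)=Z^A(v,ubA^3)$ into $Z(u,v,b)=Z(v,u,b)$. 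For Z7 I apply Lemma~\ref{5+6}, whose hypothesis of three mutually orthogonal pairs $(w,u),(z,v),(x,y)$ with common pairing $A$ is verbatim that of Z7, with $A=a^N$, the ``$b$'' of Lemma~\ref{5+6} taken to be $r$, and its ``$c$'' taken to be $\tfrac b{a^{11N}}$; then $X(u+vb,0,cA^{11})=X(u,0,cA^{11})X(v,0,b^2cA^{11})Z^A(u,vbcA^9)$ becomes $X(u+vr,0,b)=X(u,0,b)X(v,0,br^2)Z^A(u,vr\tfrac b{a^{2N}})$, and $Z^A(u,vr\tfrac b{a^{2N}})=Z^A(u,v\tfrac{br}{a^{2N}})=Z(u,v,br)$, which is the asserted identity.

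I do not expect an essential obstacle: the entire content is bookkeeping of the exponents of $a$, together with checking that the auxiliary vectors required by each source lemma are precisely those supplied in the statement --- up to the harmless rescaling $w\mapsto wa^M$, which raises $\lan w,u\ran=a^N$ to $a^{N+M}$ if a larger exponent is ever needed. The single point I would state with care is that each division by a power of $a$ lands inside $I$; this is exactly where the standing hypothesis that $\lambda_a\colon I\to I_a$ is an isomorphism enters, and the $N$-independence of $Z(u,v,b)$ lets me fix $N$ in advance large enough that all the divisions occurring in the argument are defined.
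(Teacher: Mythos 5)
Your proposal is correct and follows exactly the route the paper intends: the paper's own ``proof'' is the one-line remark that Z0--Z7 ``follow directly from the definition and Lemmas~\ref{conj}--\ref{5+6},'' and your write-up just makes that bookkeeping explicit, specialising $A=a^N$ and absorbing the surplus powers of $a$ into the $I$-scalar via the isomorphism $\lambda_a\colon I\to I_a$. The exponent arithmetic in each of Z2, Z3, Z5, Z6, Z7 checks out, and your choices of auxiliary vectors match the hypotheses of the source lemmas.
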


We need yet another property of $Z(u,\,v,\,b)$.

\begin{lm}
\label{onemore}
For $u$, $v\in B^{2n}$, $b\in I$, $M$, $N\in\mathbb N$, such that $a^N\in I(u)$, $\lan u,\,v\ran=0$, holds
$$Z(ua^M,\,v,\,b)=Z(u,\,v,\,a^Mb).$$
\end{lm}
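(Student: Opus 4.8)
The plan is to unwind both sides of the claimed identity to one and the same element of the form $Z(u;\,\{\,\cdot\,\})$, using nothing more than the definition of $Z(u,\,v,\,b)$ in terms of $Z^A$, the Suslin-vector formula defining $Z^A$, and Lemma~\ref{forgotten}.

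First I would note that $I(ua^M)=a^MI(u)$, so that $a^{N+M}\in I(ua^M)$ and $Z(ua^M,\,v,\,b)$ is defined; by the independence of the defining exponent observed in the definition, I may compute it using the exponent $N+M$. Fix a vector $w\in B^{2n}$ with $\lan w,\,u\ran=a^N$; then $\lan w,\,ua^M\ran=a^{N+M}$, so the same $w$ serves in both Suslin decompositions. Write $s_{ij}=(e_iu_{-j}\,\sign j-e_ju_{-i}\,\sign i)(v_iw_j-v_jw_i)$ for the Suslin vectors of the triple $(w,\,u,\,v)$; by Lemma~\ref{suslin} each $s_{ij}$ is orthogonal to $u$, satisfies $s_{ij}=s_{ji}$, and has a symmetric pair of zero coordinates, so that all the expressions $Z(\,\cdot\,;\{\cdot\})$ below are legitimate. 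A direct computation shows that the Suslin vectors of the triple $(w,\,ua^M,\,v\tfrac{b}{a^{2(N+M)}})$ are $\tfrac{b}{a^{2N+M}}\,s_{ij}$, whence, straight from the definitions and the cancellation rules for division by $a$ in $I$,
$$
Z(ua^M,\,v,\,b)=Z\Big(ua^M;\,\Big\{\tfrac{b}{a^{N}}\,s_{ij}\Big\}_{i<j}\Big)=Z\Big(u;\,\Big\{\tfrac{a^Mb}{a^{N}}\,s_{ij}\Big\}_{i<j}\Big),
$$
the second equality being Lemma~\ref{forgotten} applied with $r=a^M$.

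On the other side, the Suslin vectors of the triple $(w,\,u,\,v\tfrac{a^Mb}{a^{2N}})$ are $\tfrac{a^Mb}{a^{2N}}\,s_{ij}$, so directly from the definition
$$
Z(u,\,v,\,a^Mb)=Z^{(a^N)}\Big(u,\,v\tfrac{a^Mb}{a^{2N}}\Big)=Z\Big(u;\,\Big\{\tfrac{a^Mb}{a^{N}}\,s_{ij}\Big\}_{i<j}\Big),
$$
and the two right-hand sides coincide, which is the assertion. The argument is almost entirely mechanical; the only points that need care --- and hence the (mild) main obstacle --- are the bookkeeping of powers of $a$, the observation that every element $\tfrac{b}{a^{k}}$ and $\tfrac{a^Mb}{a^{k}}$ occurring in it indeed lies in $I$ because division by $a$ is a well-defined operation there, and the verification of the hypotheses of Lemma~\ref{forgotten}, namely $\lan u,\,s_{ij}\ran=0$ and that each $s_{ij}$ has a pair of zero coordinates --- both of which are immediate from Lemma~\ref{suslin}.
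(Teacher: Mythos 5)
Your proof is correct and follows essentially the same route as the paper's: fix $w$ with $\lan w,\,u\ran=a^N$, compute the Suslin vectors of $(w,\,ua^M,\,v\,b/a^{2N+2M})$ explicitly to see that they rescale by $a^M$ relative to those of $(w,\,u,\,v\,a^Mb/a^{2N})$, and then apply Lemma~\ref{forgotten} with $r=a^M$. The bookkeeping of powers of $a$ matches the paper's computation exactly.
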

\begin{proof}
Firstly, we clearify the notations. Take $w\in B^{2n}$ such that $\lan w,\,u\ran=a^N$, then $\lan w,\,ua^M\ran=a^{N+M}$. Denote $u_{ij}=e_iu_{-j}\,\sign j-e_ju_{-i}\,\sign i$, then $(ua^M)_{ij}=u_{ij}a^M$. Denote as usually $v_{ij}=u_{ij}(v_iw_j-v_jw_i)$. Then in the definition of 
$$Z^{a^{N+M}}\Big(ua^M,\,v\frac b{a^{2N+2M}}\Big)$$ 
we actually use $(ua^M)_{ij}\cdot(v_iw_j-v_jw_i)=v_{ij}a^M$. Thus, we have
\begin{multline*}
Z(ua^M,\,v,\,b)=Z^{a^{N+M}}\Big(ua^M,\,v\frac b{a^{2N+2M}}\Big)=\\
=Z\Big(ua^M;\,\Big\{\big(v_{ij}a^M\frac{b}{a^{2N+2M}}\big)a^{N+M}\Big\}_{i<j}\Big).
\end{multline*}
Now use Lemma~\ref{forgotten} and get
$$
Z\Big(ua^M;\,\Big\{\big(v_{ij}\frac{b}{a^{2N}}\big)a^{N}\Big\}_{i<j}\Big)=Z\Big(u;\,\Big\{\big(v_{ij}\frac{a^Mb}{a^{2N}}\big)a^{N}\Big\}_{i<j}\Big)=Z(u,\,v,\,a^Mb).
$$
\end{proof}

Finally, introduce yet another notation.

\begin{df}
For $u$, $v\in B^{2n}$, such that $a^N\in I(u)$ for some $N\in\mathbb N$, $\lan u,\,v\ran=0$, $b$, $c\in I$ denote
$$
Z(u,\,v,\,b,\,c)=Z(u,\,v,\,b)X(u,\,0,\,c).
$$
\end{df}

At this point, we are ready to construct Tulenbaev map 
$$\mathrm T\colon\St\!\Sp_{2n}^{\mathrm T}(B_a,\,I)\rightarrow\St\!\Sp_{2n}(B).$$
\begin{proof}[Proof of Lemma~\ref{tul}]
For each quadruple 
$$(u,\,v,\,b,\,c)\in\Big(\Ep_{2n}(B_a)e_1\Big)\times B_a^{2n}\times I\times I$$
we associate an element in $\St\!\Sp_{2n}(B)$. We proceed as follows. First, if $u=Me_1$, denote $w=-Me_{-1}$, then $\lan w,\,u\ran=1$. Next, $w$, $u$, $v\in B_a$, thus there exists an $N\in\mathbb N$ such that the entries of $wa^N$, $ua^N$, $va^N$ do not have denominators, i.e., lie in the image of the localisation homomorphism $\lambda_a\colon B\rightarrow B_a$. Then for each of these elements take its their preimages and get vectors $\tilde w$, $\tilde u$, $\tilde v\in B^{2n}$. Since $\lan\tilde u,\,\tilde v\ran$ localises to zero and $\lan \tilde w,\,\tilde u\ran$ localises to $a^{2N}$, there exist $M\in\mathbb N$ such that $\lan\tilde u,\,\tilde va^M\ran=0$ and $\lan\tilde wa^M,\,\tilde u\ran=a^{2N+M}$. Then the element
$$
Z\Big(\tilde u,\,\tilde va^M,\,\frac b{a^{2N+M}},\,\frac c{a^{2N}}\Big)
$$
in $\St\!\Sp_{2n}(B)$ is defined. Using Lemma~\ref{xlist}(X2), Lemma~\ref{zlist}(Z1) and Lemma~\ref{onemore} one can show that this element does not depend on the above choices. Thus, we have a well-defined set-theoretic map from the set of generators of $\St\!\Sp_{2n}^{\mathrm T}(B_a,\,I)$ to $\St\!\Sp_{2n}(B)$. 

Next, we need to show that the images of $(u,\,v,\,b,\,c)$ under this map satisfy relations T0--T6. This is a straightforward consequence of Lemmas~\ref{xlist} and \ref{zlist} and the fact that the above map is well-defined. The only trick one should use to get T3--T5 is the following. For $u=Me_1$, where $M\in\Ep_{2n}(B_a)$, one can take $w=-Me_{-1}$, $v=Me_{-2}$, $z=Me_2$ and use their lifts to deduce T3 from Lemma~\ref{zlist}(Z5). Similarly, one can use $(e_3,\,e_{-3})$ for T4 and T5.

Now, we have to show that the diagram 
$$
\xymatrix{
\St\!\Sp_{2n}^{\mathrm T}(B,\,I)\ar@<-0.0ex>[rr]^{\kappa}\ar@<-0.0ex>[d]_{\lambda_{a}^*}&&\St\!\Sp_{2n}(B)\ar@<-0.0ex>[d]_{\lambda_{a}^*}\\
\St\!\Sp_{2n}^{\mathrm T}(B_a,\,I_a)\ar@<-0.0ex>[rr]_{\kappa}\ar@<-0.0ex>[rru]_{\mathrm T}&&\St\!\Sp_{2n}(B_a)\\
}
$$
is commutative. Start with the upper triangle. Lemma~\ref{zlist}\,(Z7) and Lemma~\ref{xlist2}\,(X8) imply that for $u$ with a pair of symmetric zeros holds $Z(u,\,v,\,b,\,c)=X(u,\,vb,\,c)$. Next, take $X(u,\,v,\,b,\,c)\in\St\!\Sp_{2n}(B,\,I)$ and take $g\in\St\!\Sp_{2n}(B)$ such that $\phi(g)u=e_1$. Then $\kappa$ sends it to $X(u,\,vb,\,c)$ and $T\circ\lambda_a^*$ to $$Z\Big(\lambda_a(u)a^N,\,\lambda_a(v)a^{N+M},\,\frac{b}{a^{2N+M}},\,\frac{c}{a^{2N}}\Big).$$ Now, conjugate both elements by $g$ and use the previous observation to show that they coincide. The lower triangle can be treated similarly.

Finally, we show that $\mathrm T$ maps 
$$g\in\mathrm{Im}\Big(\St\!\Sp_{2n-2}(B_a,I_a)\rightarrow\St\!\Sp_{2n}(B_a,I_a)\Big)$$ 
to the element of $\mathrm{Im}\Big(\St\!\Sp_{2n-2}(B)\rightarrow\St\!\Sp_{2n}(B)\Big)$. For $n=2$ there exist no obvious analogue of the Tulenbaev map, so that for $n=3$ we argue as follows (for $n>3$ this argument works as well). We can assume that $g=X(u,\,v,\,b,\,c)$ for $u$ and $v$ such that $u_n=u_{-n}=v_n=v_{-n}=0$. Then, consider lifts $\tilde u$, $\tilde v$ of $ua^N$ and $va^N$. Their $\,\pm n$-th coordinates localise to zeros, thus increasing $N$ we may assume that they actually are zeros. Thus, it remains to show that for $u$ and $v$ such that $u_n=u_{-n}=v_n=v_{-n}=0$ one has $Z(u,\,v,\,b,\,c)\in\mathrm{Im}\St\!\Sp_{2n-2}(B)$. As above, in this situation $Z(u,\,v,\,b,\,c)=X(u,\,vb,\,c)$. By Lemma~\ref{xlist}\,(X6), we only need to consider $X(u,\,0,\,c)$ with $u_n=u_{-n}=0$. With the use of Lemma~\ref{ylist}\,(Y3--Y6) we reduce it to the case $X(e_i,\,v,\,c)$ with $i\neq\pm n$, $v_n=v_{-n}=0$. To conclude the proof, decompose this element by Lemma~\ref{xlist2}\,(X10) as a product of usual elementary generators of Steinberg group
$$X(e_i,\,v,\,c)=X(e_i,\,0,\,c-\sum v_kv_{-k})\prod X(e_i,\,e_kv_k,\,0).$$
Al of them lie in $\mathrm{Im}\St\!\Sp_{2n-2}(B)$.
\end{proof}


\begin{thebibliography}{99}
\bibitem{n1} E.~Abe, Whitehead groups of Chevalley groups over polynomial rings, {\it Comm. Algebra} {\bf 11} (1983) 1271--1307.
\bibitem{n2} A.~Bak, N.~Vavilov, Structure of hyperbolic unitary groups I: elementary subgroups, {\it Algebra Colloq.} {\bf7} (2) (2000) 159--196.
\bibitem{n3} W.~van der Kallen, Another presentation for Steinberg groups, {\it Indag. Math} {\bf 80} (1977) 304--312.
\bibitem{n4} F.~Keune, The relativisation of $\mathrm K_2$, {\it J. Algebra} {\bf54} (1) (1987) 159--177.
\bibitem{n5} V.~Kopeiko, Stabilization of symplectic groups over rings of polynomials (Russian), {\it Mat. Sb.} ({\it N. S.}) {\bf 106}, ({\bf148}) (1) (1978) 94--107.
\bibitem{l1} A.~Lavrenov, Another presentation for symplectic Steinberg groups, {\it J. Pure Appl. Algebra} {\bf219} (9) (2015) 3755--3780.
\bibitem{n7} J.-L. Loday, Cohomologie et groupes de Steinber relatifs, {\it J. Algebra} {\bf54} (1) (1978) 178--202.
\bibitem{n8} S.~Sinchuk, On centrality of $\mathrm K_2$ for Chevalley groups of type $E_l$, {\it J. Pure Appl. Algebra} {\bf220} no.~2 (2016) 857--875.
\bibitem{n9} A.~Stavrova, Homotopy invariance of non-stable $\mathrm K_1$-functors, {\it J.~K-theory} {\bf13} no.2 (2014) 199--248.
\bibitem{n10} A. Stepanov, Structure of Chevalley groups over rings via universal localization, {\it J. Algebra} {\bf 450} (2016) 522--548.
\bibitem{n11} A. Suslin, On the structure of special linear group over polynomial rings, {\it Math. USSR Izv.} {\bf11} (1977) 221--238.
\bibitem{n12} A. Suslin, V. Kopeiko, Quadratic modules and the orthogonal group over polynomial rings, {\it J. Soviet Math.} {\bf20} (1982) 2665--2691.
\bibitem{n13} D. Quillen, Projective modules over polynomial rings, {\it Invent. Math.} {\bf 36} (1976) 167--171.
\bibitem{n14} M. Tulenbaev, The Steinberg group of a polynomial ring, {\it Math. USSR Sb.} {\bf45} (1) (1983) 131--144.
\end{thebibliography}
\end{document}